\definecolor{labelkey}{rgb}{0,0.08,0.45}
\definecolor{refkey}{rgb}{0,0.6,0.0}
\definecolor{Brown}{rgb}{0.45,0.0,0.05}
\definecolor{dgreen}{rgb}{0.00,0.49,0.00}
\definecolor{dblue}{rgb}{0,0.08,0.75}
\newtheorem{theorem}{Theorem}[section]
\newtheorem{corollary}[theorem]{Corollary}
\newtheorem{lemma}[theorem]{Lemma}
\newtheorem{fact}[theorem]{Fact}
\newtheorem{proposition}[theorem]{Proposition}
\theoremstyle{definition}
\newtheorem{algorithm}[theorem]{Algorithm}
\newtheorem{remark}[theorem]{Remark}
\newtheorem{example}[theorem]{Example}
\numberwithin{equation}{section}
\providecommand{\norm}[1]{\lVert#1\rVert}
\providecommand{\scalarp}[1]{\langle#1\rangle}
\providecommand{\abs}[1]{\lvert#1\rvert}
\newcommand{\EE}{\ensuremath{\mathsf E}}
\newcommand{\E}{\ensuremath{\mathsf{J}}}
\newcommand{\Fsc}{\ensuremath{\mathfrak{E}}}
\newcommand{\UU}{\ensuremath{\mathsf{U}}}
\newcommand{\As}{\ensuremath{\mathsf{A}}}
\newcommand{\Id}{\ensuremath{\mathsf{Id}}}
\newcommand{\PP}{\ensuremath{\mathsf P}}
\newcommand{\R}{\ensuremath \mathbb{R}}
\newcommand{\HH}{\ensuremath \mathsf{H}}
\newcommand{\WW}{\ensuremath \mathsf{W}}
\newcommand{\VV}{\ensuremath \mathsf{V}}
\newcommand{\XX}{\ensuremath \mathsf{X}}
\newcommand{\HHs}{\ensuremath \mathsf{H}}
\newcommand{\Gammas}{\ensuremath \mathsf{\Gamma}}
\newcommand{\Lambdas}{\ensuremath \mathsf{\Lambda}}
\newcommand{\YC}{\ensuremath \mathcal{Y}}
\newcommand{\GG}{\ensuremath \mathsf{G}}
\newcommand{\pp}{\ensuremath \mathsf{p}}
\newcommand{\xx}{\ensuremath \mathsf{x}}
\newcommand{\ee}{\ensuremath \mathsf{e}}
\newcommand{\as}{\ensuremath \mathsf{a}}
\newcommand{\bs}{\ensuremath \mathsf{b}}
\newcommand{\ww}{\ensuremath \mathsf{w}}
\newcommand{\zz}{\ensuremath \mathsf{z}}
\newcommand{\vv}{\ensuremath \mathsf{v}}
\newcommand{\uu}{\ensuremath \mathsf{u}}
\newcommand{\yy}{\ensuremath \mathsf{y}}
\newcommand{\N}{\ensuremath \mathbb{N}}
\newcommand{\iter}{\ensuremath n}
\newcommand{\prox}{\ensuremath{\text{\sf prox}}}
\DeclareMathOperator*{\esssup}{\ensuremath{\text{\rm ess\:sup}}}
\DeclareMathOperator*{\dom}{\ensuremath{\text{\rm dom}}}
\DeclareMathOperator*{\diam}{\ensuremath{\text{\rm diam}}}
\DeclareMathOperator*{\argmin}{\text{\rm argmin}}
\newcommand{\minimize}[2]{\ensuremath{\underset{\substack{{#1}}}%
{\text{\rm minimize}}\;\;#2 }}
\newcommand{\Fss}{\ensuremath{{\mathsf{\Phi}}}}
\newcommand{\Fs}{\ensuremath{{\mathsf{F}}}}
\newcommand{\fs}{\ensuremath{{\mathsf{g}}}}
\newcommand{\gs}{\ensuremath{{\mathsf{h}}}}
\newcommand{\bHH}{\ensuremath {\mathsf{H}}}
\newcommand{\bSS}{\ensuremath {\mathsf{S}}}
\newcommand{\bWW}{\ensuremath{{\WW}}}
\newcommand{\bVV}{\ensuremath{{\VV}}}
\newcommand{\bXX}{\ensuremath{{\XX}}}
\newcommand{\bAs}{\ensuremath{{\As}}}
\newcommand{\bxx}{\ensuremath \xx}
\newcommand{\byy}{\ensuremath \yy}
\newcommand{\bzz}{\ensuremath \zz}
\newcommand{\bvv}{\ensuremath \vv}
\newcommand{\buu}{\ensuremath \uu}
\newcommand{\bee}{\ensuremath \ee}
\newcommand{\bbs}{\ensuremath \bs}
\newcommand{\bas}{\ensuremath \as}
\newcommand{\bvarepsilon}{\ensuremath \varepsilon}
\newcommand{\beps}{\ensuremath \epsilon}
\newcommand{\bx}{\ensuremath x}
\newcommand{\by}{\ensuremath y}
\newcommand{\bv}{\ensuremath v}
\newcommand{\bu}{\ensuremath u}
\newcommand{\bfs}{\ensuremath{{\mathsf{f}}}}
\newcommand{\bgs}{\ensuremath{{\gs}}}
\newcommand{\bzero}{\ensuremath{{0}}}
\newcommand{\bvarphi}{\ensuremath{{\varphi}}}
\newcommand{\bDelta}{\ensuremath{{\Delta}}}
\newcommand{\bGammas}{\ensuremath{{\Gammas}}}
\begin{document}

\title{ {\sffamily Parallel Random Block-Coordinate Forward-Backward Algorithm:\\ 
A Unified Convergence Analysis}}
\author{Saverio Salzo\thanks{Istituto Italiano di Tecnologia, Via Melen, 83,  
        16152 Genova, Italy 
        ({\tt saverio.salzo@iit.it}).}\ \  and Silvia Villa\thanks{Universit\`a degli Studi di Genova, Via Dodecaneso, 35, 16146 Genova, Italy ({\tt silvia.villa@unige.it}). {\scriptsize Supported by the H2020-MSCA-RISE project NoMADS-GA No.~777826 and by Gruppo Nazionale per l'Analisi Matematica, la Probabilit\`a e le loro Applicazioni (GNAMPA) of the Istituto Nazionale di Alta Matematica (INdAM).}}}
\date{}

\maketitle

\begin{abstract}
We study the block-coordinate forward-backward 
algorithm in which the blocks are updated in a random and possibly parallel manner,
according to arbitrary probabilities.
The algorithm allows different stepsizes along the block-coordinates
to fully exploit the smoothness properties of the objective function.
In the convex case and in an infinite dimensional setting, we establish almost sure weak convergence of the iterates and 
the asymptotic rate $o(1/n)$ for the mean of the function values. 
We derive linear rates
under strong convexity and error bound conditions. 
Our analysis is based on an abstract convergence principle for stochastic descent algorithms which allows to extend and simplify existing results.  
\end{abstract}

\vspace{1ex}
\noindent
{\bf\small Keywords.} {\small Convex optimization, parallel algorithms,
random block-coordinate descent, arbitrary sampling, error bounds,
stochastic quasi-Fej\'er sequences, forward-backward algorithm, convergence rates.}\\[1ex]
\noindent
{\bf\small AMS Mathematics Subject Classification:} {\small 65K05, 90C25, 90C06, 49M27}

\section{Introduction and problem setting}

Random block-coordinate descent algorithms are nowadays among the methods of choice for solving large scale optimization problems \cite{Nes12,Ric16,Wri15}. Indeed, they have low complexity and low memory requirements and, additionally, they are amenable for distributed and parallel implementations \cite{Ric16c,Ric16}.
In the last decade a number of works have appeared on the topic which address 
several aspects, that is: the way the block sampling is performed, the composite structure, the partial separability, and the smoothness/geometrical properties of the objective function, accelerations, and iteration complexity  \cite{Com15,Com18,Fer15,LuX15,Nec16,Nec17,Nes12,Qu16,Qu16b,Ric15,Ric16,Tap18}.

In this work we consider the following optimization problem
\begin{equation}
\label{eq:20190110b}
\minimize{\bxx \in \bHH}{\bfs(\bxx)+\bgs(\bxx)},
\qquad\bgs(\bxx)= \sum_{i=1}^m \gs_i(\xx_i),
\end{equation}
where $\bHH$ is the direct sum of $m$ separable real Hilbert spaces 
 $(\HH_i)_{1 \leq i \leq m}$, that is,
\begin{equation*}
\bHH = \bigoplus_{i=1}^m \HH_i,\qquad 
(\forall\, \bxx = (\xx_i)_{1 \leq i \leq m}, \byy = (\yy_i)_{1 \leq i \leq m} \in \bHH)\quad
\scalarp{\bxx,\byy} = \sum_{i=1}^m \scalarp{\xx_i,\yy_i},
\end{equation*}
and the following assumptions hold:
\begin{enumerate}[{\rm H1}]
\item\label{eq:A1} 
$\bfs\colon \bHH \to \R$ is convex and differentiable,
\item\label{eq:A2} 
for every $i=1, \dots, m$, $\gs_i\colon \HH_i \to \left]-\infty, +\infty\right]$ is  proper, convex, and lower semicontinuous.
\end{enumerate}
The objective of this study is a stochastic algorithm, called
\emph{parallel random block-coordinate forward-backward algorithm}, that depends 
on a random variable $\bvarepsilon$ satisfying the following hypothesis
\begin{enumerate}[{\rm H3}] 
\item\label{eq:A3} $\bvarepsilon = (\varepsilon_1,\dots, \varepsilon_m)$ is a random variable with values in $\{0,1\}^m$ 
such that, for every $i \in \{1,\dots,m\}$, $\pp_i:=\PP(\varepsilon_i = 1)>0$
and $\PP\big(\bvarepsilon = (0,\dots, 0)\big) =0$.
\end{enumerate}
\begin{algorithm}
\label{algoRCD}
Let $(\bvarepsilon^\iter)_{\iter \in \N} 
=  (\varepsilon^\iter_1, \dots, \varepsilon^\iter_m)_{\iter \in \N}$ be a sequence of 
independent copies of $\bvarepsilon$. 
Let $(\gamma_i)_{1 \leq i \leq m} \in \R_{++}^m$ and
 $\bx^0 = (\bx^0_1,\dots, \bx^0_m)  \equiv \xx^0 \in \dom \bgs$ be a constant random variable. Iterate
 
\vspace{-2.5ex}
\begin{equation}
\label{eq:algoPRCD2}
\begin{array}{l}
\text{for}\;n=0,1,\ldots\\
\left\lfloor
\begin{array}{l}
\text{for}\;i=1,\dots, m\\[0.7ex]
\left\lfloor
\begin{array}{l}
x^{\iter+1}_i = 
x^\iter_i + \varepsilon^\iter_i \big(\prox_{\gamma_i \gs_{i}} 
\big(x^\iter_i - \gamma_{i} \nabla_i \bfs (\bx^{\iter})\big) - x^\iter_i\big).
\end{array}
\right.
\end{array}
\right.
\end{array}
\end{equation}
For every $\iter \in \N$,
we denote by $\Fsc_{\iter}$ the sigma-algebra generated by $\bvarepsilon^{0},\dots, \bvarepsilon^{\iter}$.
\end{algorithm}

In Algorithm~\ref{algoRCD}, the role of the random variable 
$\bvarepsilon^\iter$ is to select, at iteration $\iter$,
the blocks to update in parallel (those indexed in 
$\{i \in \{1,\dots, m\} \,\,\vert\, \varepsilon_i^\iter = 1\}$). 
When all block-coordinates are simultaneously updated at each iteration, Algorithm~\ref{algoRCD} reduces to
the (deterministic) forward-backward algorithm, which converges
 only if the stepsizes are appropriately set.
More specifically, if $\nabla \bfs$ is  $L$-Lipschitz continuous, 
then convergence is ensured if the stepsizes $\gamma_i$ are all equal and \emph{strictly less} than $2/L$ \cite{Com05,Dav16}. This fact is proved by using 
the so called \emph{descent lemma}, i.e., 
\begin{equation}
\label{eq:20191012b}
(\forall\, \bxx \in \bHH)(\forall\, \bvv\in \bHH)\qquad \bfs(\bxx + \bvv) \leq \bfs(\bxx) + \scalarp{\nabla \bfs(\bxx), \bvv}
+ \frac{L}{2} \norm{\bvv}^2.
\end{equation}
Indeed, \eqref{eq:20191012b} is itself an assumption concerning the smoothness of $\bfs$,
since it is well-known to be equivalent to the Lipschitz continuity of the gradient of $\bfs$ \cite[Theorem~18.15]{book1}.
By contrast, when 
the block-coordinates are updated one by one in a serial manner,
it is desirable to 
allow moving along the block-coordinates with different 
 stepsizes, depending on the 
Lipschitz constants of the partial gradients of $\bfs$ across 
the block-coordinates
\cite{Bec13,Nes12}. So, in this case 
it is more appropriate to assume that a descent lemma holds on each block-coordinate subspace individually, that is,
\begin{equation}
\label{eq:S00}
(\forall\, i=1,\dots,m)(\forall\, \bxx \in \bHH)(\forall\,\vv_i \in \bHH_i)\quad
\bfs(\bxx + \E_i\bvv_i)
\leq \bfs(\bxx) +  \scalarp{\nabla_i \bfs(\bxx), \bvv_i}
+ \frac{L_i}{2} \norm{\vv_i}^2,
\end{equation}
where $\E_i \bvv_i = (0, \dots, 0, \bvv_i,0, \dots, 0)$  ($\bvv_i$ occurring at the $i$-th position),
for some positive constants $L_i$'s.
In the setting of Algorithm~\ref{algoRCD}, multiple block-coordinates may be updated in parallel at each iteration, 
according to the random sampling $\varepsilon$. Therefore, 
it is reasonable to assume that there exists $(\nu_i)_{1 \leq i \leq m} \in \R_{++}^\N$ 
so that one of the \emph{generalized smoothness conditions} below holds
\begin{enumerate}[{\rm S1}] 
\item
\label{eq:S1}
$(\forall\, \bxx,\bvv \in \bHH)\quad
\EE[\bfs(\bxx + \bvarepsilon \odot \bvv)]
\leq \bfs(\bxx) +  \EE[\scalarp{\nabla \bfs(\bxx), \bvarepsilon \odot \bvv}]
+\displaystyle\frac{1}{2} \sum_{i=1}^m \pp_i \nu_i \norm{\vv_i}^2
$,
\item\label{eq:S2} 
$(\forall\, \bxx,\bvv \in \bHH)\quad
\bfs(\bxx + \bvarepsilon \odot \bvv)
\leq \bfs(\bxx) +  \scalarp{\nabla \bfs(\bxx), \bvarepsilon \odot \bvv}
+ \displaystyle\frac{1}{2} \sum_{i=1}^m \nu_i \varepsilon_i \norm{\vv_i}^2
\quad \PP \text{ a.s.}$,
\end{enumerate}
where $\bvarepsilon \odot \bvv = (\varepsilon_i \vv_i)_{1 \leq i \leq m} \in \bHH$.
Conditions \ref{eq:S1} and \ref{eq:S2} can be interpreted 
as descent lemmas on random block-coordinate subspaces,
 depending on the chosen random sampling of the block-coordinates.
They reduce to \eqref{eq:S00}, with $\nu_i=L_i$, if the sampling $\varepsilon$
selects only one block at a time almost surely (see Section~\ref{subsec:beta}).
We call $(\nu_i)_{1 \leq i \leq m}$ the 
 \emph{smoothness parameters} of $\bfs$.
 Then, similarly to the deterministic case,
we will adopt the following stepsize rule
\begin{equation}
\label{eq:stepsizerule}
(\forall\, i \in \{1,\dots, m\})\qquad \gamma_i < \frac{2}{\nu_i}.
\end{equation}
Another smoothness condition suitable for Algorithm~\ref{algoRCD}, which was considered in \cite{Nec16}, is
\begin{enumerate}[{\rm S3}] 
\item
\label{eq:S0}
$(\forall\, \bxx,\bvv \in \bHH)\quad
\bfs(\bxx + \bvv)
\leq \bfs(\bxx) +  \scalarp{\nabla \bfs(\bxx), \bvv}
+\displaystyle\frac{1}{2} \sum_{i=1}^m \nu_i \norm{\vv_i}^2$.
\end{enumerate}
Note that, \ref{eq:S0} $\ \Rightarrow\ $ \ref{eq:S2} $\ \Rightarrow\ $ \ref{eq:S1},
which in turn implies the Lipschitz continuity of 
the gradient of $\bfs$ 
(see Theorem~\ref{thm:stepsizes}\ref{thm:stepsizes_iv}). So, possibly with different values of the $\nu_i$'s,
the above conditions are all equivalent. The point is that in the parallel setting 
(where multiple blocks are updated in parallel at each iteration),
 \ref{eq:S1} may be fulfilled with values of $\nu_i$ that are much smaller than 
those related to the other two conditions, ultimately allowing to significantly increase the stepsizes
and hence speeding up the convergence. Moreover, 
\ref{eq:S1} makes parallelization particularly effective
on problems with a sparse structure and superior to the serial strategy (which updates a single block per iteration).
See the discussion after Theorem~\ref{thm:20171207a}.
The critical role played by assumption \ref{eq:S1} in the analysis of 
parallel randomized block-coordinate descent methods was pointed out
in \cite{Qu16b,Ric16,Ric16b,Tap18}. There, it was called \emph{expected separable overapproximation} (ESO) inequality. 
 Condition
\ref{eq:S2} is new and serves to guarantee that Algorithm~\ref{algoRCD}
is almost surely descending (Proposition~\ref{p:20181219c}), which is a property 
that is especially relevant when error bound conditions hold (see Section~\ref{sec:errorbounds}).
Note that in \cite{Ric16} the issue of monotonicity of the algorithm was addressed for each sampling separately without any general guidance.
Finally, we stress that, except for \cite{Com15,Com18} 
(which study the convergence of the iterates only), in all previous works 
the stepsizes $\gamma_i$'s are set equal to $1/\nu_i$. 
This is an unnecessary limitation that we remove,
so to match the standard stepsize rule of the forward-backward algorithm \cite{Com05,Dav16}.
\begin{remark}
\label{rmk:20190126c}
For every $i=1,\dots, m$, the canonical embedding of $\HH_i$ into $\bHH$ is
the operator $\E_i\colon \HH_i \to \bHH$, $\xx \mapsto (0, \dots, 0, \xx,0, \dots, 0)$,
where $\xx$ occurs in the $i$-th position.
Then Algorithm~\ref{algoRCD} can be written as
\begin{equation*}
\bx^{\iter+1} = \bx^\iter + \sum_{i=1}^m \varepsilon_i^\iter \E_i
\big(\prox_{\gamma_i \gs_{i}} \big(x^\iter_i - \gamma_{i} \nabla_i \bfs (\bx^{\iter})\big) - x^\iter_i\big).
\end{equation*}
\end{remark}

\subsection{Main contributions and comparison to previous work}

In the following we summarize the main contributions of this paper, where,
for the sake of brevity,  we set $\Fs=\bfs+\bgs$.  
We assume that \ref{eq:A1}--\ref{eq:A3} are satisfied and that \ref{eq:S1} is met.
Then, the following hold.
\begin{itemize}
\item 
Algorithm~\ref{algoRCD} is descending in expectation and 
$\EE[\Fs(\bx^{\iter})] - \inf \Fs  \to 0$, even if the infimum is not attained. 
If $\argmin \Fs \neq \varnothing$, then 
$\EE[\Fs(\bx^{\iter})] -  \inf \Fs= o(1/\iter)$. In addition, 
a nonasymptotic bound for $\EE[\Fs(\bx^{\iter})] -  \inf \Fs$ of order $O(1/\iter)$
holds. 
Finally, 
there exists a random variable $\bx_*$ with values in $\argmin \Fs$ such that 
$\bx^\iter \rightharpoonup \bx_*$ $\PP$-a.s. See Theorem~\ref{thm:20171207a}.  
\item 
If $\Fs$ is strongly convex or satisfies an error bound condition of Luo-Tseng type 
(see condition~\ref{eq:EB1}), 
then 
 the  iterates 
 as well as the corresponding  
function values generated by Algorithm~\ref{algoRCD}, converge linearly in expectation.
See Theorem~\ref{p:20181130a}, Theorem~\ref{thm:20181214a}, and 
Theorem~\ref{thm20200218a}.
\end{itemize}

Our results advance the state-of-the-art in the study of random block-coordinate 
descent methods under several aspects. We comment on this below.
1) While convergence of the function values has been intensively studied in the related literature 
(see e.g., \cite{Kar18,LuX15,Nec16,Nes12,Qu15,Ric16,Ric16b,Tap18}), surprisingly, in a convex setting, 
convergence of the iterates has been investigated only recently in \cite{Com15}, but with stepsizes set 
according to the global Lipschitz constant of $\nabla \bfs$. 
See also \cite{Fer19} which addresses the convergence of the iterates in the framework 
of primal-dual algorithms with a serial and uniform block sampling.
We improve the existing results, since we show \emph{convergence of the iterates} 
for Algorithm~\ref{algoRCD} in an \emph{infinite dimensional setting} 
even when the stepsizes are chosen according to the condition \ref{eq:S1},
which can incorporate the block Lipschitz constants of the gradient of $\bfs$ and is at the basis of the effectiveness of the 
parallel block-coordinatewise approach. 
2) The worst case \emph{asymptotic rate $o(1/\iter)$} for the mean of the function values is new in the setting of stochastic algorithms. 
3) Our analysis spotlights an abstract convergence principle for stochastic descent algorithms (Theorem~\ref{p:SMFejer}) which is essentially a special form
 of the stochastic quasi-Fej\'er monotonicity property, involving also the values of the objective functions.
This principle, previously investigated in 
a deterministic setting in \cite{Sal17}, allows to prove in 
a unified way both the almost sure convergence of the iterates and rates of convergence for the mean of the function values.
4) As a by-product of the above analysis we single out an inequality (Proposition~\ref{p:20190610a}) 
which is pivotal for studying 
\emph{the convergence under error
bound conditions}, improving the results and simplifying the analysis in \cite{Nec16}.
5) We allow for  {\em parallel and arbitrary sampling 
of the blocks} in a composite setting. 
The benefit of
such sampling in terms of convergence rate have been first investigated in \cite{Ric16b} 
for a strongly convex and smooth objective function.
In \cite{Qu16} a composite objective optimization problem was analyzed but for a slightly different algorithm. The rest of the studies deal either with parallel uniform sampling of the blocks \cite{Ric16}, or with the case where a single block is updated at each iteration \cite{LuX15,Nes12}. 
6) We also allow for {\em stepsizes larger than those considered in literature}
\cite{Kar18,LuX15,Nec16,Nes12,Qu15,Ric16,Ric16b,Tap18}, since we can let the stepsizes 
go beyond $1/\nu_i$ and be arbitrarily close to
$2/\nu_i$, matching the standard rule for the forward-backward algorithm. This provides additional flexibility to the algorithm. Indeed, in the strongly convex case we show that the optimal stepsizes are strictly 
larger than $1/\nu_i$.

The rest of the paper is organized as follows. 
In Section~\ref{sec:notation} we give notation and basic facts.
Section~\ref{sec:stepsizes} shows how to determine the smoothness parameters 
$\nu_i$ when $\bfs$ features a partially separable structure. 
In Section~\ref{sec:conana} we carry out the convergence analysis and give the related theorems.
Finally, Section~\ref{sec:app} shows three applications 
and Section~\ref{sec:experiments} provides some numerical experiments. 

\section{Notation and background}
\label{sec:notation}

\paragraph{Notation.}
We define $\R_+ = \left[0,+\infty\right[$, $\R_{++} = \left]0,+\infty\right[$, 
for every integer $s \geq 1$, $[s] = \{1, \dots, s\}$, and for every $a \in \R^s$, 
 $\mathrm{spt}(a) = \{i \in [s] \,\vert\, a_i\neq 0\}$.
 Scalar products and norms in Hilbert spaces are denoted by $\scalarp{\cdot,\cdot}$
 and $\norm{\cdot}$ respectively.
If $\UU\colon \HH \to \GG$ is a bounded linear operator between 
real Hilbert spaces, 
$\UU^\top \colon \GG \to \HH$ is its transpose operator, that is, the one satisfying $\scalarp{\UU \xx, \yy} = \scalarp{\xx, \UU^\top \yy}$, 
for every $(\xx,\yy) \in \HH\times \GG$.
Let $(\HH_i)_{1 \leq i \leq m}$ be $m$ separable real Hilbert spaces and let 
$\bHH=\bigoplus_{i=1}^m \HH_i$ be their direct sum.
For every $\bvv \in \bHH $ and $\beps \in \{0,1\}^m$
we set $\beps \odot\bvv = (\epsilon_i \vv_i)_{1 \leq i \leq m} \in \bHH$.
We will consider random variables with underlying probability space $(\Omega, \mathfrak{A}, \PP)$ taking values in $\HH_i$ or $\bHH$.  We use
the default font for random variables and sans serif font
for their realizations. The expected value operator is denoted by $\EE$.
 A copy of a random variable is  random variable having the same distribution of the given one.
Let $(w_i)_{1 \leq i \leq m} \in \R^m_{++}$. 
The direct sum operator 
$\bWW = \bigoplus_{i=1}^m w_i \Id_i$, 
where $\Id_i$ 
is the identity operator on $\HH_i$, is the positive bounded linear operator on $\bHH$ acting as
$\bxx=(\xx_i)_{1 \leq i \leq m} \mapsto (w_i \xx_i)_{1 \leq i \leq m}$. 
$\bWW$ defines an equivalent inner product on $\bHH$ 
\begin{equation*}
(\forall\, \bxx \in \bHH)(\forall\, \byy \in \bHH)\qquad\scalarp{\bxx, \byy}_{\bWW} = \scalarp{\bWW \bxx, \byy} 
= \sum_{i=1}^m w_i \scalarp{\xx_i, \yy_i},
\end{equation*}
which gives the norm $\norm{\bxx}_{\bWW}^2 
= \sum_{i=1}^m w_i \norm{\xx_i}^2$.
If $\bSS \subset \bHH$ and $\bxx \in \bHH$, we set
 $\mathrm{dist}_{\bWW}(\bxx,\bSS) 
= \inf_{\bm{\zz} \in \bSS} \norm{\bxx - \bm{\zz}}_{\bWW}$.
Let $\bvarphi\colon \bHH \to \left]-\infty,+\infty\right]$ be proper, convex, and lower semicontinuous. The domain of $\bvarphi$ is $\dom \bvarphi 
= \{\bxx \in \HH \,\vert\, \bvarphi(\bxx)<+\infty\}$ and the set 
of minimizers of $\bvarphi$ is $\argmin \bvarphi 
= \{\bxx \in \bHH \,\vert\, \bvarphi(\bxx) = \inf \bvarphi\}$. 
The subdifferential of $\bvarphi$ in the metric $\scalarp{\cdot,\cdot}_{\bWW}$ 
is the multivalued operator 
\begin{equation*}
\partial^{\bWW} \bvarphi \colon \bHH \to 2^{\bHH},\ \ 
\bxx \mapsto \partial^{\bWW} \bvarphi(\bxx)= \big\{\buu \in \bHH \,\vert\, 
(\forall\, \byy \in \bHH)\ \bvarphi(\byy) \geq \bvarphi(\bxx) 
+ \scalarp{\buu, \byy - \bxx}_{\bWW}\big\}.
\end{equation*}
In case $\bWW = \Id$, it is simply denoted by $\partial \bvarphi$.
Clearly $\partial^{\bWW} \bvarphi = \bWW^{-1} \partial \bvarphi$.
If the function $\bvarphi\colon \bHH\to \R$ is differentiable, 
then, for every $\bxx \in \bHH$, $\partial^{\bWW} \bvarphi(\bxx) = \{\nabla^{\bWW} \bvarphi(\bxx)\}$ and for all $\bvv \in \bHH$, $\scalarp{\nabla^{\bWW} \bvarphi(\bxx), \bvv}_{\bWW} = \scalarp{\nabla \varphi(\bxx), \bvv}$.
The proximity operator of $\bvarphi$ in the metric $\scalarp{\cdot,\cdot}_{\bWW}$  is defined as 
\begin{equation*}
\prox_{\bvarphi}^{\bWW} \colon \bHH\to \bHH,\quad
\prox_{\bvarphi}^{\bWW}(\bxx) = \argmin_{\bzz \in \bHH} \bvarphi(\bzz) + \frac 1 2 \norm{\bxx - \bzz}^2_{\bWW}. 
\end{equation*}
Referring to the functions in \eqref{eq:20190110b},
we denote by $\mu_{\Gammas^{-1}}$ and $\sigma_{\Gammas^{-1}}$ the moduli of strong convexity 
of $\bfs$ and $\bgs$ respectively,
 in the norm $\norm{\cdot}_{\bGammas^{-1}}$, where 
 $\bGammas = \bigoplus_{i=1}^m \gamma_i \Id_i $ 
 and the $\gamma_i$'s
 are the stepsizes occurring in Algorithm~\ref{algoRCD}.
 This means that $\mu_{\Gammas^{-1}}, \sigma_{\Gammas^{-1}} \in \R_+$ and that,
 for every $\bxx,\byy \in \bHH$,
\begin{align}
\label{eq:20181112b}\bfs(\byy) &\geq \bfs(\bxx) + \scalarp{\nabla \bfs(\bxx), \byy - \bxx} 
+ \frac{\mu_{\Gammas^{-1}}}{2} \sum_{i=1}^m \frac{1}{\gamma_i}\norm{\yy_i - \xx_i}^2,\\
\label{eq:20181112b1}(\forall\, \bvv \in \partial \bgs(\bxx))\qquad
\bgs(\byy) &\geq \bgs(\bxx) + \scalarp{\bvv, \byy - \bxx} 
+ \frac{\sigma_{\Gammas^{-1}}}{2} \sum_{i=1}^m \frac{1}{\gamma_i}\norm{\yy_i - \xx_i}^2.
\end{align}
Note that, since $\bgs$ is separable, by taking $\byy= \bxx + \E_i(\yy_i - \xx_i)$ in \eqref{eq:20181112b1}, we have
\begin{equation}
\label{eq:20181112a}
(\forall\, i \in [m])\quad
\gs_i(\yy_i) \geq \gs_i(\xx_i) + \scalarp{\vv_i, \yy_i - \xx_i} + \frac{\sigma_{\Gammas^{-1}}}{2} \frac{1}{\gamma_i} \norm{\yy_i - \xx_i}^2.
\end{equation}

\begin{remark}
\label{rmk:20190126e}
If \ref{eq:S1} is satisfied, the $\gamma_i$'s are chosen
as in \eqref{eq:stepsizerule}, and $\delta = \max_{1 \leq i \leq m} \gamma_i \nu_i$ (according to the convergence theorems), then we have
\begin{equation}
\label{eq:20190126e}
\mu_{\Gammas^{-1}} \leq \min_{1 \leq i \leq m}\gamma_i \nu_i \leq \delta <2.
\end{equation}
Indeed,
let $\bxx\in \bHH$,  $i \in [m]$ and $\vv_i \in \HH_i$, $\vv_i \neq 0$.
It follows from \eqref{eq:20181112b} with 
$\byy = \bxx + \bvarepsilon\odot \E_i \vv_i$ (where $\E_i$ is defined in 
Remark~\ref{rmk:20190126c}) and  \ref{eq:S1} that
\begin{align*}
\bfs(\bxx) + \EE[\scalarp{\nabla \bfs(\bxx), \bvarepsilon \odot \E_i \vv_i}] + \frac{\mu_{\Gammas^{-1}}}{2} \frac{\pp_i}{\gamma_i}\norm{\vv_i}^2
&\leq \EE[\bfs(\bxx + \bvarepsilon\odot \E_i \vv_i)] \\
&\leq \bfs(\bxx) +  \EE[\scalarp{\nabla \bfs(\bxx), \bvarepsilon \odot \E_i \vv_i}]
+\displaystyle\frac{1}{2}  \pp_i \nu_i \norm{\vv_i}^2.
\end{align*}
Thus, \eqref{eq:20190126e} follows.
\end{remark}

\begin{fact}[{\cite[Example~5.1.5]{book2}}]
\label{f:20190112d}
Let $\zeta_1$ and $\zeta_2$ be independent random variables with values in 
the measurable spaces $\mathcal{Z}_1$ and $\mathcal{Z}_2$ respectively. 
Let $\varphi\colon \mathcal{Z}_1\times \mathcal{Z}_2 \to \R$ be measurable
and suppose that $\EE[\abs{\varphi(\zeta_1,\zeta_2)}]<+\infty$.
Then $\EE[\varphi(\zeta_1,\zeta_2) \,\vert\, \zeta_1] = \psi(\zeta_1)$,
where for all $z_1 \in \mathcal{Z}_1$, $\psi(z_1) = \EE[\varphi(z_1, \zeta_2)]$.
\end{fact}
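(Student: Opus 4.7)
The plan is to carry out the standard measure-theoretic machine, bootstrapping from indicators of product sets up to a general integrable $\varphi$. Before starting, I would use Tonelli/Fubini to justify that $\psi$ is well-defined and measurable on $\mathcal{Z}_1$: for nonnegative measurable $\varphi$, the product measure $\PP_{\zeta_1}\otimes \PP_{\zeta_2}$ is exactly the law of $(\zeta_1,\zeta_2)$ by independence, so Tonelli guarantees that $z_1\mapsto \EE[\varphi(z_1,\zeta_2)] = \int \varphi(z_1,z_2)\,\PP_{\zeta_2}(\ud z_2)$ is measurable, and finiteness almost everywhere follows from the integrability hypothesis after splitting into positive and negative parts. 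This makes $\psi(\zeta_1)$ a bona fide $\sigma(\zeta_1)$-measurable random variable, so it is eligible to play the role of a conditional expectation.

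Next I would check the defining property of conditional expectation: for every $B\in\sigma(\zeta_1)$, one has $\EE[\varphi(\zeta_1,\zeta_2)\mathbf{1}_B] = \EE[\psi(\zeta_1)\mathbf{1}_B]$. I would start with $\varphi=\mathbf{1}_{A_1\times A_2}$ for measurable rectangles, where independence yields
\begin{equation*}
\EE[\mathbf{1}_{A_1}(\zeta_1)\mathbf{1}_{A_2}(\zeta_2)\mathbf{1}_B(\zeta_1)] = \PP(\zeta_1\in A_1\cap B)\,\PP(\zeta_2\in A_2),
\end{equation*}
while $\psi(z_1)=\mathbf{1}_{A_1}(z_1)\PP(\zeta_2\in A_2)$, so the right-hand side gives the same product. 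The collection of sets $C\in \mathcal{Z}_1\otimes\mathcal{Z}_2$ for which the identity holds with $\varphi=\mathbf{1}_C$ is a Dynkin (monotone) class containing the $\pi$-system of measurable rectangles, hence by the $\pi$--$\lambda$ theorem it equals the whole product $\sigma$-algebra.

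From there the extension is routine: by linearity the identity passes to nonnegative simple functions; by monotone convergence applied simultaneously to both sides (and to the definition of $\psi$ itself, which is monotone in $\varphi$) it extends to arbitrary nonnegative measurable $\varphi$; finally, splitting $\varphi=\varphi^+-\varphi^-$ and using $\EE[|\varphi(\zeta_1,\zeta_2)|]<+\infty$ to ensure that $\psi(\zeta_1)$ is almost surely finite gives the general case. The resulting equality holds $\PP$-a.s., which is exactly the statement of the fact.

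The only step that requires a little care is the transition from rectangles to the full product $\sigma$-algebra, since one must verify that the class of sets where the conditional-expectation identity holds is closed under complements and countable disjoint unions; this is where the $\pi$--$\lambda$ theorem does the work. Everything else is the standard machine and essentially automatic, which is why the result can safely be cited as in \cite{book2}.
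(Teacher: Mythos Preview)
Your proof is correct and follows the standard measure-theoretic machine (indicators of rectangles, $\pi$--$\lambda$, simple functions, monotone convergence, decomposition into positive and negative parts). Note, however, that the paper does not actually prove this statement: it is recorded as a \emph{Fact} with a direct citation to \cite[Example~5.1.5]{book2} and no argument is given. So there is nothing to compare against; you have supplied a full proof where the paper simply quotes the result from Durrett.
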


\begin{fact}
\label{lem:20181024a}
Let $\bvarepsilon$ be a random variable with values in  $\{0,1\}^m$ and, for all $i \in [m]$, $\pp_i = \PP(\varepsilon_i = 1)$.
Then $\EE[\varepsilon_i] = \pp_i$ and, for every
 $\bvv = (\vv_i)_{1 \leq i\leq m} \in \R^m$,  $\EE[\scalarp{\bvarepsilon, \bvv} ]
 = \sum_{i=1}^m \pp_i \vv_i$.
\end{fact}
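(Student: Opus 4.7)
The plan is to handle both assertions by direct computation, since each component $\varepsilon_i$ of $\bvarepsilon$ is a Bernoulli-type random variable and the inner product is a finite linear combination of such variables.

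First I would compute $\EE[\varepsilon_i]$. Since $\varepsilon_i$ takes values only in $\{0,1\}$, its expectation is simply
\begin{equation*}
\EE[\varepsilon_i] = 1 \cdot \PP(\varepsilon_i = 1) + 0 \cdot \PP(\varepsilon_i = 0) = \pp_i,
\end{equation*}
which gives the first claim immediately. No integrability issue arises because $\varepsilon_i$ is bounded.

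Next, for $\bvv = (\vv_i)_{1 \leq i \leq m} \in \R^m$, I would expand the inner product as $\scalarp{\bvarepsilon,\bvv} = \sum_{i=1}^m \varepsilon_i \vv_i$ and apply linearity of the expectation (each term is integrable since $|\varepsilon_i \vv_i| \leq |\vv_i|$). Combining this with the first part gives
\begin{equation*}
\EE[\scalarp{\bvarepsilon,\bvv}] = \sum_{i=1}^m \vv_i \EE[\varepsilon_i] = \sum_{i=1}^m \pp_i \vv_i.
\end{equation*}

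There is no real obstacle here; the statement is a direct consequence of the definition of expectation for $\{0,1\}$-valued random variables together with linearity. The only point worth making explicit is the applicability of linearity, which is justified by the boundedness of each $\varepsilon_i$.
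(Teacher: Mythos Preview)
Your proof is correct. The paper states this as a \emph{Fact} without proof, so there is no argument to compare against; your direct computation via the Bernoulli structure of each $\varepsilon_i$ and linearity of expectation is exactly the intended justification.
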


\begin{fact}[\cite{book3}]
\label{lem:monosequence}
Let $(a_\iter)_{\iter \in \N}$ be a decreasing sequence in $\R_+$. 
If $\sum_{\iter=0}^{+\infty} a_\iter <+\infty$, then, 
for every $\iter \in \N$, $a_\iter \leq (1/(\iter+1)) \sum_{\iter=0}^{+\infty} a_\iter$
and $a_\iter = o\big( 1/(\iter+1)\big)$.
\end{fact}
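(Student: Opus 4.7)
The statement is a classical piece of real analysis about summable monotone sequences, so the plan is elementary and I expect no serious obstacle. I would prove the two assertions in sequence, both exploiting monotonicity to compare $a_\iter$ with partial or tail sums of the series.

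For the first inequality, the plan is to use that $(a_\iter)_{\iter\in\N}$ is decreasing and non-negative: for every $k \in \{0,1,\dots,\iter\}$ one has $a_k \geq a_\iter$, so
\begin{equation*}
(\iter+1)\, a_\iter \;=\; \sum_{k=0}^{\iter} a_\iter \;\leq\; \sum_{k=0}^{\iter} a_k \;\leq\; \sum_{k=0}^{+\infty} a_k,
\end{equation*}
and dividing by $\iter+1$ gives the desired bound. This is just a one-line observation and uses only monotonicity, non-negativity, and summability (the last one only to know that the right-hand side is finite).

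For the little-$o$ statement, the plan is the standard Cauchy-tail argument: set $S_N = \sum_{k=N}^{+\infty} a_k$, which tends to $0$ as $N \to +\infty$ by summability. Fix $\iter \geq 1$ and let $m = \lceil \iter/2 \rceil$, so that $\iter - m + 1 \geq \iter/2$. By monotonicity,
\begin{equation*}
\frac{\iter}{2}\, a_\iter \;\leq\; (\iter - m + 1)\, a_\iter \;=\; \sum_{k=m}^{\iter} a_\iter \;\leq\; \sum_{k=m}^{\iter} a_k \;\leq\; S_m.
\end{equation*}
Since $m \to +\infty$ when $\iter \to +\infty$, the right-hand side tends to $0$, hence $\iter\, a_\iter \to 0$, which is equivalent to $a_\iter = o(1/(\iter+1))$.

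The only mild point to watch is the book-keeping of indices (choosing $m$ so that both $\iter - m + 1$ is of order $\iter$ and $m \to +\infty$); any split like $m = \lceil \iter/2\rceil$ works. No subtlety beyond that, and the argument requires nothing more than the hypotheses already in the statement.
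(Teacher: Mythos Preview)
Your proof is correct. Note, however, that the paper does not actually prove this statement: it is presented as a \emph{Fact} with a citation to Knopp's book on sequences and series, and no argument is given in the paper itself. Your two-step argument (the partial-sum bound for the $O(1/(\iter+1))$ estimate and the tail-splitting at $m=\lceil \iter/2\rceil$ for the $o(1/(\iter+1))$ claim) is the standard elementary proof of this classical result and is perfectly adequate.
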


\section{Determining the smoothness parameters}
\label{sec:stepsizes}
In this section we provide few scenarios for which the relaxed 
smoothness conditions \ref{eq:S1} and \ref{eq:S2} can be fully exploited, 
attaining tight values for the $\nu_i$'s.
This ultimately allows to take larger stepsizes and improves rates of convergence.
In \cite{Qu16b,Tap18} an extensive analysis of cases in which \ref{eq:S1}
is satisfied is presented.

\subsection{General estimates.}
We consider the following setting.
\begin{enumerate}[{\rm H4}]
\item\label{eq:B0} 
The function $\bfs\colon \bHH \to \R$ is such that
\begin{equation}
\label{eq:20190605b}
(\forall\, \bxx \in \bHH)\qquad
\bfs(\bxx) = \sum_{k=1}^p \fs_k \bigg(\sum_{i=1}^m \UU_{k,i} \xx_i\bigg),
\end{equation}
where, for every $k =1,\dots, p$, 
 $\fs_k \colon \GG_k \to \R$ is a convex differentiable function
 defined on a real Hilbert space $\GG_k$ 
 and, for every $i \in [m]$, $\UU_{k,i}\colon \HH_i \to \GG_k$ 
 is a bounded linear operator. Moreover, 
 $\bigcup_{k = 1}^p I_k \neq \varnothing$, where, for all 
 $k =1,\dots, p$, $I_k = \big\{i \in [m] \,\vert\, \UU_{k,i} \neq 0\big\}$,
and $\eta = \max_{1 \leq k \leq p} \mathrm{card}(I_k)$.
\end{enumerate}
We will also consider one of the following conditions.
 \begin{enumerate}[{\rm L1}]
\item\label{eq:B1} For every $i=1, \dots, m$ there exists $L_i>0$ such that, for every 
$\bxx \in \bHH$,  the function
$\nabla_i \bfs (\xx_1, \dots, \xx_{i-1}, \cdot, \xx_{i+1}, \dots, \xx_m) \colon \HH_i \to \HH_i$
 is $L_i$-Lipschitz continuous.
 \item\label{eq:B2}
For every $k=1,\dots, p$,
 $\nabla \fs_k \colon \GG_k \to \GG_k$ is $L^{(k)}$-Lipschitz continuous and for 
 every $i,j \in [m]$, $i\neq j$, the ranges of   $\UU_{k,i}$ and $\UU_{k,j}$
 are orthogonal.
\end{enumerate}

Assumption \ref{eq:B0} concerns the \emph{partial separability} of the function $\bfs$.
Depending on the number of the nonzero operators $\UU_{k,i}$,
$\fs_k$ might depend only on few block-variables $\xx_i$'s:
if $\eta = 1$, $\bfs$ is fully separable, whereas if $\eta = m$, $\bfs$ is not separable.
Note that \ref{eq:B1} is equivalent to \eqref{eq:S00} and, 
since $\bfs$ is convex, implies the global Lipschitz continuity of the 
gradient of $\bfs$ (Corollary~\ref{p:20181014a}). So either \ref{eq:B1} or \ref{eq:B2}
implies the global Lipschitz smoothness of $\bfs$.
However, considering the constants $L_i$'s or $L^{(k)}$'s leads in general to 
a finer analysis of the smoothness properties of $\bfs$, 
eventually determining parameters $\nu_i$ that are smaller than the global Lipschitz constant of $\nabla \bfs$.
Instances of problem~\eqref{eq:20190110b} where $\bfs$
has the structure shown in \ref{eq:B0},
occur very often in applications. In particular,
a prominent example is that of the \emph{Lasso problem} which
will be discussed in Section~\ref{subsec:lasso}.
The following theorem, which is proved in Appendix~\ref{sec:appB}, 
relates the smoothness parameters $(\nu_i)_{1 \leq i \leq m}$  to the block Lipschitz constants 
of the partial gradients of $\bfs$ and to the Lipschitz constants of the gradients 
of its components $\fs_k$'s in
\eqref{eq:20190605b}, as well as to the distribution of the random variable $\bvarepsilon$.

\begin{theorem}
\label{thm:stepsizes}
Assume  \ref{eq:A3} and \ref{eq:B0} and let
 $(\nu_i)_{1 \leq i \leq m} \in \R_{++}^\N$. 
 Then the following hold.
 \begin{enumerate}[{\rm (i)}]
 \item\label{thm:stepsizes_i} \ref{eq:B1}$\ \Rightarrow\  $ \ref{eq:S1} provided that
\begin{equation*}
(\forall\, i \in [m])\quad
\nu_i \geq \beta_{1,i} L_i,\quad\text{where}\quad
\beta_{1,i} := \EE \Big[
\max_{1 \leq k \leq p} \Big(\sum_{j\in I_k} \varepsilon_j \Big)\,\Big\vert\, \varepsilon_i=1
\Big].
\end{equation*}
\vspace{-4ex}
\item \label{thm:stepsizes_ii}
\ref{eq:B1}$\ \Rightarrow\  $ \ref{eq:S2} provided that
\begin{equation*}
(\forall\, i \in [m])\quad \nu_i \geq \beta_2 L_i,\quad\text{where}\quad
\beta_2 :=\esssup \Big(\max_{1 \leq k \leq p}\Big(\sum_{j \in I_k} \varepsilon_j \Big) \Big).
\end{equation*}
\vspace{-4ex}
\item\label{thm:stepsizes_iii}
\ref{eq:B2}$\ \Rightarrow\  $ \ref{eq:S0}
provided that
\begin{equation}
(\forall\, i \in [m])\quad
\nu_i \geq \tilde{L}_i := \Big\lVert \sum_{k=1}^p L^{(k)} \UU_{k,i}^\top \UU_{k,i}\Big\rVert.
\end{equation}
\vspace{-3ex}
\item\label{thm:stepsizes_iv} 
\ref{eq:S1}$\ \Rightarrow\  $\ref{eq:B1} with, for all $i \in [m]$, $L_i = \nu_i$. 
In particular, \ref{eq:S1} implies that $\bfs$ is Lipschitz smooth.
 \end{enumerate}
\end{theorem}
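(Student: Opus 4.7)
The plan is to prove the four implications separately. Part~\ref{thm:stepsizes_iii} is the easiest and is a direct application of the standard descent lemma tailored to the block structure. I would apply the descent lemma to each convex, $L^{(k)}$-smooth $\fs_k$ at $\sum_i\UU_{k,i}\xx_i$ with perturbation $\sum_i\UU_{k,i}\vv_i$; under \ref{eq:B2}, the orthogonality of the ranges makes the quadratic term split as $\lVert\sum_i\UU_{k,i}\vv_i\rVert^2 = \sum_i\langle\UU_{k,i}^\top\UU_{k,i}\vv_i,\vv_i\rangle$. Summing over $k$, the linear parts collapse into $\langle\nabla\bfs(\bxx),\bvv\rangle$ via the identity $\nabla_i\bfs(\bxx) = \sum_{k:i\in I_k}\UU_{k,i}^\top\nabla\fs_k(\sum_j\UU_{k,j}\xx_j)$, and the quadratic parts become $\tfrac{1}{2}\sum_i\big\langle\big(\sum_k L^{(k)}\UU_{k,i}^\top\UU_{k,i}\big)\vv_i,\vv_i\big\rangle\le \tfrac{1}{2}\sum_i\tilde L_i\lVert\vv_i\rVert^2$ by definition of the operator norm, yielding \ref{eq:S0}.

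For Parts~\ref{thm:stepsizes_i} and~\ref{thm:stepsizes_ii}, the central step is a pointwise generalized descent lemma: under \ref{eq:B0} and \ref{eq:B1}, for every realization $\beps\in\{0,1\}^m$ and every $\bxx,\bvv\in\bHH$,
\begin{equation*}
\bfs(\bxx+\beps\odot\bvv) \le \bfs(\bxx)+\langle\nabla\bfs(\bxx),\beps\odot\bvv\rangle+\frac{1}{2}\sum_{i=1}^m \epsilon_i L_i\Big(\max_{1\le k\le p}\sum_{j\in I_k}\epsilon_j\Big)\lVert\vv_i\rVert^2.
\end{equation*}
My approach is to split $\bfs = \sum_k\tilde\fs_k$ with $\tilde\fs_k(\bxx)=\fs_k(\sum_i\UU_{k,i}\xx_i)$ and to control each increment $\tilde\fs_k(\bxx+\beps\odot\bvv)-\tilde\fs_k(\bxx)$ by a block-coordinate descent estimate restricted to the active indices $I_k\cap\mathrm{spt}(\beps)$; the combinatorial factor $|I_k\cap\mathrm{spt}(\beps)|\le\max_k\sum_{j\in I_k}\epsilon_j$ enters through Cauchy--Schwarz on the aggregated perturbation $\sum_{i\in I_k\cap\mathrm{spt}(\beps)}\UU_{k,i}\vv_i$, and the constants $L_i$ of \ref{eq:B1} emerge only after regrouping the $k$-wise contributions along each coordinate $i$. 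This aggregation is the main technical obstacle, since the individual smoothness data of the $\fs_k$'s must be packaged into the $L_i$'s of $\bfs$ without producing spurious factors; the construction is in the spirit of the ESO analyses of \cite{Qu16b,Tap18}. Once the pointwise bound is in place, \ref{thm:stepsizes_ii} follows immediately by bounding the combinatorial factor by $\beta_2$ $\PP$-almost surely, and \ref{thm:stepsizes_i} follows by taking expectations and using the identity $\EE[\varepsilon_i\max_k\sum_{j\in I_k}\varepsilon_j] = \pp_i\,\EE[\max_k\sum_{j\in I_k}\varepsilon_j\mid\varepsilon_i=1]=\pp_i\beta_{1,i}$, obtained by conditioning on $\varepsilon_i$.

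Finally, for Part~\ref{thm:stepsizes_iv}, I would specialize \ref{eq:S1} to $\bvv=\E_i\vv_i$ with $\E_i$ as in Remark~\ref{rmk:20190126c}. Since $\bvarepsilon\odot\E_i\vv_i = \varepsilon_i\E_i\vv_i$, one has $\EE[\bfs(\bxx+\varepsilon_i\E_i\vv_i)] = (1-\pp_i)\bfs(\bxx)+\pp_i\bfs(\bxx+\E_i\vv_i)$ and $\EE[\langle\nabla\bfs(\bxx),\varepsilon_i\E_i\vv_i\rangle] = \pp_i\langle\nabla_i\bfs(\bxx),\vv_i\rangle$; substituting these into \ref{eq:S1} and dividing by $\pp_i>0$ yields the per-block descent inequality
\begin{equation*}
\bfs(\bxx+\E_i\vv_i)\le\bfs(\bxx)+\langle\nabla_i\bfs(\bxx),\vv_i\rangle+\frac{\nu_i}{2}\lVert\vv_i\rVert^2.
\end{equation*}
Since $\bfs$ is convex, this descent inequality along the $i$-th coordinate is equivalent to $\nabla_i\bfs(\xx_1,\dots,\cdot,\dots,\xx_m)$ being $\nu_i$-Lipschitz (applying \cite[Theorem~18.15]{book1} to the restriction of $\bfs$ along the $i$-th coordinate axis), which is \ref{eq:B1} with $L_i=\nu_i$; the global Lipschitz smoothness of $\nabla\bfs$ then follows from Corollary~\ref{p:20181014a}.
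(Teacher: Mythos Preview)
Your treatment of parts~\ref{thm:stepsizes_iii} and~\ref{thm:stepsizes_iv} matches the paper's proof essentially line by line, and for parts~\ref{thm:stepsizes_i} and~\ref{thm:stepsizes_ii} you correctly identify both the key pointwise inequality
\begin{equation*}
\bfs(\bxx+\beps\odot\bvv) \le \bfs(\bxx)+\langle\nabla\bfs(\bxx),\beps\odot\bvv\rangle+\frac{1}{2}\Big(\max_{1\le k\le p}\sum_{j\in I_k}\epsilon_j\Big)\sum_{i=1}^m \epsilon_i L_i\lVert\vv_i\rVert^2
\end{equation*}
and the final steps (essential supremum for~\ref{thm:stepsizes_ii}, conditioning on $\{\varepsilon_i=1\}$ for~\ref{thm:stepsizes_i}), which also match the paper.

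The gap is in your sketch of how to derive this pointwise inequality. You propose to bound each $\tilde\fs_k(\bxx+\beps\odot\bvv)-\tilde\fs_k(\bxx)$ separately via a descent estimate and then apply Cauchy--Schwarz to $\lVert\sum_{i\in I_k\cap\mathrm{spt}(\beps)}\UU_{k,i}\vv_i\rVert^2$. But this route requires smoothness information about the individual $\fs_k$'s (or $\tilde\fs_k$'s), which \ref{eq:B1} does not provide: \ref{eq:B1} gives blockwise Lipschitz constants $L_i$ for $\bfs=\sum_k\tilde\fs_k$, not for each summand. Even if you assume blockwise constants $L_i^{(k)}$ for each $\tilde\fs_k$, you would end up with $\sum_k L_i^{(k)}$ rather than $L_i$, and these need not coincide. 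The paper avoids this entirely (Proposition~\ref{p:20181026a} and Corollary~\ref{cor:20181026b}): it uses only the \emph{convexity} of each $\fs_k$ to write, for any positive weights $(q_i)_{i\in I}$ with $\sum_{i\in I\cap I_k}q_i\le 1$,
\begin{equation*}
\zz^{(k)}+\sum_{i\in I\cap I_k}\UU_{k,i}\vv_i = \Big(1-\sum_{i\in I\cap I_k}q_i\Big)\zz^{(k)}+\sum_{i\in I\cap I_k}q_i\big(\zz^{(k)}+q_i^{-1}\UU_{k,i}\vv_i\big),
\end{equation*}
and then applies Jensen's inequality. After switching the order of summation over $k$ and $i$, the inner sum over $k$ reconstitutes $\bfs(\bxx+q_i^{-1}\E_i\vv_i)$, to which \ref{eq:B1} applies directly, producing the constants $L_i$ with no loss. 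Choosing $I=\mathrm{spt}(\beps)$ and $q_i=1/\max_k|I_k\cap\mathrm{spt}(\beps)|$ gives the stated combinatorial factor. This convex-combination trick, not Cauchy--Schwarz, is what makes the $L_i$'s of \ref{eq:B1} emerge cleanly.
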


\begin{remark}\ 
\label{rmk:20190128e}
\begin{enumerate}[{\rm (i)}]
\item\label{rmk:20190128e_i} Suppose that in \ref{eq:B0}, 
for all $k \in [p]$,
$\GG_k = \bHH$, $\fs_k$ is
$L^{(k)}$-Lipschitz smooth, and, for all $i \in I_k$,
$\UU_{k,i} = \E_i$, the canonical embedding of $\HH_i$ into $\bHH$ (see Remark~\ref{rmk:20190126c}).
Then, \ref{eq:B2} holds and, for every $i \in [m]$, 
$\tilde{L}_i = \sum_{k\,\vert i \in I_k} L^{(k)}$.
Hence, in view of Theorem~\ref{thm:stepsizes}\ref{thm:stepsizes_iii}, 
\ref{eq:S0} 
is met with $\nu_i = \tilde{L}_i$.
This setting was studied in \cite{Nec16}.
\item\label{rmk:20190128e_ii} If $\nabla \bfs$ is $L$-Lipschitz continuous,
then \ref{eq:S0}
is satisfied with, 
for every $i \in [m]$, $\nu_i = L$.
Therefore, we cover the analysis of the random 
block-coordinate forward-backward algorithm given in
 \cite{Com15,Com18} which set the stepsizes as $\gamma_i < 2/L$. 
\item\label{rmk:20190128e_iii} Let, for every $k \in [p]$, $\bfs_k(\xx) =  \fs_k \big(\sum_{i=1}^m \UU_{k,i} \xx_i\big)$.
If, for every $k \in [p]$, \ref{eq:S1} (resp. \ref{eq:S2}) holds for $\bfs_k$ with $(\nu_i^{(k)})_{1 \leq i \leq m}$ ,
then \ref{eq:S1} (resp. \ref{eq:S2}) holds for $\bfs$ with $\nu_i = \sum_{k=1}^p \nu_i^{(k)}$.
\item\label{rmk:20190128e_iv} Using similar ideas as in the proof of \cite[Theorem~12]{Ric16} we show in Appendix~\ref{sec:appB} that
item \ref{thm:stepsizes_i} in Theorem~\ref{thm:stepsizes} remains true with
\begin{equation}
\label{eq:20200316a}
\beta_{1,i} := \sum_{t=1}^\eta t \max_{\substack{1 \leq k \leq p \\[0.3ex] i \in I_k}} 
\PP\Big(\sum_{j \in I_k} \varepsilon_j = t \,\big\vert\, \varepsilon_i=1 \Big).
\end{equation}
\end{enumerate}
\end{remark}
\begin{remark}
\label{rmk:20181219ee}
Referring to Theorem~\ref{thm:stepsizes},
for all $i \in [m]$, we have $1 \leq \beta_{1,i} \leq \beta_2
\leq \min\{\eta,\tau_{\max}\}$,
where
\begin{align*}
\tau_{\max} :=\esssup \Big(\sum_{i =1}^m \varepsilon_i \Big)  
&=\min\bigg\{ \tau^\prime \in \N \,\Big\vert\, \PP\bigg( \sum_{i =1}^m 
\varepsilon_i \leq \tau^\prime \bigg)=1 \bigg\}
\end{align*}
is the maximum number of blocks processed in parallel.
Indeed, since
$\PP(\bvarepsilon \equiv 0) = 0$
we have $\PP(\max_{1 \leq i \leq  m} \varepsilon_i \geq 1) = 1$. Moreover, 
since $\max_{1 \leq k \leq p} \big(\sum_{i \in I_k} \varepsilon_i \big) 
\geq \max_{1 \leq i \leq m} \varepsilon_i$, we have $1 \leq \beta_{1,i}$. 
The inequality $\beta_{1,i} \leq \beta_2$ is immediate, while the last one derives from the following
\begin{equation*}
(\forall\, k \in [p])
\quad\sum_{i \in I_k} \varepsilon_i \leq \min\Big\{\mathrm{card}(I_k), \sum_{i=1}^m \varepsilon_i \Big\}
\leq \min\Big\{\eta, \sum_{i=1}^m \varepsilon_i \Big\} \leq \min\{\eta, \tau_{\max}\}.
\end{equation*}
\end{remark}

\subsection{The smoothness parameters for some special block samplings.}
\label{subsec:beta}
Here
we show how to compute (or estimate) 
the constants $(\beta_{1,i})_{1 \leq i \leq m}$ and $\beta_2$ in 
Theorem~\ref{thm:stepsizes} and Remark~\ref{rmk:20190128e}\ref{rmk:20190128e_iv},
and the related $(\nu_i)_{1 \leq i \leq m}$, in some relevant scenarios, when \ref{eq:B0} and \ref{eq:B1}  are satisfied. 
\setlist[description]{font=\normalfont\itshape}
\begin{description}[leftmargin=0cm]
%---------------------------------------------
\item[Arbitrary parallel sampling.]
It follows from Theorem~\ref{thm:stepsizes}\ref{thm:stepsizes_ii} and Remark~\ref{rmk:20181219ee} 
that for an arbitrary (possibly nonuniform) block sampling $\bvarepsilon$, \ref{eq:S2} is satisfied provided that
$\nu_i = \min\{\eta,\tau_{\max}\} L_i$, for every $i \in [m]$. 
Additionally, if we denote by $L^{(k)}_i$ the blockwise Lipschitz constants of the gradient of the function 
 $\xx \mapsto \fs_k \big(\sum_{i=1}^m \UU_{k,i} \xx_i\big)$, then we derive from 
 Remark~\ref{rmk:20190128e}\ref{rmk:20190128e_iii}
 and the above discussion that \ref{eq:S2} holds 
 with\footnote{If $i \notin I_k$, then $L^{(k)}_i = 0$.} 
 $\nu_i = \sum_{k \vert i \in I_k} \min\{\mathrm{card}(I_k), \tau_{\max}\} L^{(k)}_i$.
However, the above estimates are rather conservative 
and can be improved for special choices of the block sampling as we will show below.
We refer to \cite{Qu16b,Ric16b} for further results on nonuniform samplings.
%---------------------------------------------
\item[Serial sampling or full separability.]
Suppose that $\tau_{\max}=1$  or $\eta=1$. 
Then, Remark~\ref{rmk:20181219ee} yields  
$\beta_{1,i}=\beta_2= \min\{\eta,\tau_{\max}\}=1$. Moreover, recalling
Theorem~\ref{thm:stepsizes}\ref{thm:stepsizes_ii}-\ref{thm:stepsizes_iv}, this also shows that
 \ref{eq:S1}, \ref{eq:S2},  and \ref{eq:B1} are indeed equivalent with 
the same smoothness parameters $\nu_i = L_i$.
So, conditions \ref{eq:S1} or \ref{eq:S2} find their justification
 only in the parallel case ($\tau_{\max}>1$) and when $\bfs$ is not 
fully separable ($\eta>1$).
%---------------------------------------------
\item[Fully Parallel.] If $\PP(\sum_{i=1}^m \varepsilon_i = m) = 1$, then 
 for every $i \in [m]$
$\pp_i =1$. This yields a fully parallel (deterministic) algorithm. 
Moreover, since $\PP\big(\bvarepsilon = (1,\dots, 1)\big)=1$, we have
$\beta_{1,i} = \eta =\beta_2$ and hence \ref{eq:S2} holds with $\nu_i = \eta L_i$. Actually,
also \ref{eq:S0} holds with $\nu_i = \eta L_i$ (see Corollary~\ref{p:20181014a}\ref{p:20181014a_3}).
%---------------------------------------------
\item[Uniform samplings.]
Suppose that $m>1$.
The sampling is \emph{uniform} if $\pp_i = \pp_j$, with $i \neq j$. 
In this case if we denote by $\bar{\tau}$ the average number of block updates per iteration, we have
$\bar{\tau}= \EE [\sum_{i=1}^m \varepsilon_i] = \sum_{i=1}^m \pp_i$ and hence $\pp_i = \bar{\tau}/m$,
for every $i \in [m]$.
In \cite{Ric16} several types of uniform samplings are studied. 
In the following we single out two of them. 
The sampling is said to be \emph{doubly uniform} if any
 two sets of blocks with the same number of blocks have the same probability to be chosen.
 In formula, this means that for every $J_1, J_2 \subset [m]$ such that $\mathrm{card}(J_1) = \mathrm{card}(J_2)$,  $\PP(\cap_{i \in J_1} \{\varepsilon_i = 1\}) = \PP(\cap_{i \in J_2} \{\varepsilon_i = 1\})$.
 For such sampling one directly derives from \eqref{eq:20200316a} 
 in Remark~\ref{rmk:20190128e}\ref{rmk:20190128e_iv} (see Appendix~\ref{sec:appB}) that
 \begin{equation}
 \label{eq:20200316b}
 \beta_{1,i} =  \beta_{1} :=  1 + \frac{\eta - 1}{m-1} \bigg( 
 \frac{\EE \big[\big(\sum_{i=1}^m \varepsilon_i \big)^2\big]}{\EE [\sum_{i=1}^m \varepsilon_i]} - 1 \bigg).
 \end{equation}
A special type of doubly uniform sampling is the \emph{$\tau$-nice} sampling in which 
$\sum_{i=1}^m \varepsilon_i = \tau$ $\PP$-a.s. for some $\tau \in [m]$. In this case \eqref{eq:20200316b}
reduces to
 \begin{equation}
 \label{eq:20200316c}
 \beta_{1,i} =  \beta_{1} := 1 + \frac{(\eta - 1)(\tau-1)}{m-1}.
 \end{equation}
 Now, according to Remark~\ref{rmk:20190128e}\ref{rmk:20190128e_iv}, 
 if we set, for every $i \in [m]$, $\nu_i = \beta_{1} L_i$, then condition \ref{eq:S1} holds.
Additionally, 
 if we denote by $L^{(k)}_i$ the blockwise Lipschitz constants of the gradient of the function 
 $\xx \mapsto \fs_k \big(\sum_{i=1}^m \UU_{k,i} \xx_i\big)$, then we derive from 
 Remark~\ref{rmk:20190128e}\ref{rmk:20190128e_iii}
 and \eqref{eq:20200316c} that \ref{eq:S1} is satisfied with $\nu_i = \sum_{k \vert i \in I_k} (1 + (\tau-1)(\mathrm{card}(I_k)-1)/(m-1)) L^{(k)}_i$. This result provides possibly even smaller values
 for the parameters $(\nu_i)_{1 \leq i \leq m}$ and was given, 
in the special setting of Remark~\ref{rmk:20190128e}\ref{rmk:20190128e_i},
 in \cite{Fer15,Tap18}.
\end{description}

\section{Convergence analysis}
\label{sec:conana}

In the rest of the paper, referring to Algorithm~\ref{algoRCD}, we set
\begin{equation}
\label{eq:W}
\bGammas^{-1} = \bigoplus_{i=1}^m \frac{1}{\gamma_i} \Id_i,
\quad(w_i)_{1 \leq i \leq m} = \Big( \frac{1}{\gamma_i \pp_i}\Big)_{1 \leq i \leq m},
\quad\bWW = \bigoplus_{i=1}^m w_i \Id_i,
\end{equation}
where $\Id_i$ is the identity operator on $\HH_i$, and
\begin{equation}
\label{eq:20171206a}
\bar{\bx}^{\iter+1} = \big(\prox_{\gamma_{i} \gs_i} ( x_i^{\iter}
- \gamma_i \nabla_i \bfs(\bx^\iter) ) \big)_{1 \leq i \leq m},
\quad
\bDelta^\iter =  \bx^\iter - \bar{\bx}^{\iter+1}.
\end{equation}
Then, we have 
\begin{equation}
\label{eq:20181114c}
\bar{\bx}^{\iter+1} = \prox_{\bgs}^{\bGammas^{-1}}\big( \bx^\iter - \nabla^{\bGammas^{-1}} \bfs(\bx^\iter) \big),
\quad\bx^{\iter+1} = \bx^\iter + \bvarepsilon^\iter \odot (\bar{\bx}^{\iter+1} - \bx^\iter),
\end{equation}
and, recalling \eqref{eq:algoPRCD2}, that for every $i \in [m]$ such that $\varepsilon^\iter_i = 1$,
\begin{equation}
\bar{x}^{\iter+1}_{i} = \prox_{\gamma_{i} \gs_{i}} \big( x^{\iter}_{i} 
- \gamma_{i} \nabla_{i} \bfs(\bx^\iter) \big) =x^{\iter+1}_i,
\quad \Delta^\iter_{i} = x^\iter_{i} - x^{\iter+1}_{i}.
\end{equation}
Note that $\bx^\iter$ and $\bar{\bx}^{\iter+1}$ are functions of the random
variables $\bvarepsilon^0, \dots, \bvarepsilon^{\iter-1}$ only, 
hence they are both discrete random variables, which are measurable with respect 
to $\Fsc_{\iter-1}$.

\subsection{An abstract principle for stochastic convergence}

We provide an abstract convergence principle for stochastic descent algorithms 
 in the same spirit of \cite[Theorem~3.10]{Sal17}. It simultaneously addresses
 the convergence of the iterates and that of the function values.

\begin{theorem}
\label{p:SMFejer}
Let $\HH$ be a separable real Hilbert space with norm $\norm{\cdot}$.
Let $\Fss\colon \HH \to \left]-\infty,+\infty\right]$ be a proper, lower semicontinuous, and convex function and set $\bSS_* = \argmin \Fss$ and $\Fss_* = \inf \Fss$.
Let $(\bx^{\iter})_{\iter \in \N}$ be a sequence of $\bHH$-valued random variables
such that $\bx^0 \equiv \bxx^0 \in \dom \Fss$ and, for every $\iter \in \N$, $\Fss(\bx^{\iter})$ is $\PP$-summable.
Consider the following conditions
\begin{enumerate}[{\rm P1}]
\item\label{SMFejer_a} $(\EE[\Fss(\bx^{\iter})])_{\iter \in \N}$ is decreasing.
\item\label{SMFejer_b} 
There exist a sequence $(\mathfrak{X}_{\iter})_{\iter \in \N}$ of sub-sigma algebras of 
$\mathfrak{A}$ such that, $(\forall\, \iter \in \N)$ 
$\mathfrak{X}_{\iter} \subset \mathfrak{X}_{\iter+1}$ and 
$\bx^{\iter}$ is $\mathfrak{X}_{\iter}$-measurable,
 a sequence $(\xi_\iter)_{\iter \in \N}$ 
of $\mathfrak{X}_{\iter}$-measurable real-valued positive random variables such that
$\sum_{\iter \in \N} \EE[\xi_\iter] \leq b<+\infty$, and
 $a>0$ such that, for every $\bxx \in \dom \Fss$ and $\iter \in \N$,
\begin{equation}
\label{eq:SMFejer}
\EE[\norm{\bx^{\iter+1} - \bxx}^2 \,\vert\, \mathfrak{X}_{\iter} ] 
\leq   \norm{\bx^{\iter} - \bxx}^2 
 + a \EE[ \Fss(\bxx) - \Fss(\bx^{\iter+1}) \,\vert\, \mathfrak{X}_{\iter}] +  \xi_n
\quad \PP\text{-a.s.}
\end{equation}
\item\label{SMFejer_c} There exist 
$(\by^\iter)_{\iter \in \N}$ and $(\bv^\iter)_{\iter \in \N}$,
sequences of $\bHH$-valued random variables, such that 
 $(\forall\, \iter \in \N)$ 
$\bv^\iter\in \partial \Fss(\by^{\iter})$,
  $\by^{\iter} - \bx^{\iter} \rightharpoonup 0$, and $\bv^{\iter} \to 0$ $\PP$-a.s.
\end{enumerate}
Assume \ref{SMFejer_a} and that 
$(\inf_{\iter \in \N} \EE[\Fss(\bx^\iter)]>-\infty ) \Rightarrow\ $\ref{SMFejer_b}.
 Then, the following hold.
\begin{enumerate}[{\rm (i)}]
\item\label{SMFejer_i} $\EE[\Fss(\bx^{\iter})] \to \Fss_*$.
\item\label{SMFejer_ii} Suppose that $\bSS_*\neq \varnothing$.
Then $\EE[\Fss(\bx^{\iter})] -  \Fss_* = o(1/\iter)$ and,
\begin{equation*}
(\forall\, \iter \in \N,\iter \geq 1)\quad
\EE[\Fss(\bx^{\iter})] - \Fss_*  \leq 
\bigg[ \frac{ \mathrm{dist}^2(\bxx^0, \bSS_*) }{a}
+ \frac{b}{a}\bigg] \frac{1}{n}.
\end{equation*}
\item\label{SMFejer_iii}
Suppose that \ref{SMFejer_c} holds and $\bSS_*\neq \varnothing$.
Then, there exists a random variable $\bx_*$ taking values in $\bSS_*$ such that
$\bx^\iter \rightharpoonup \bx_*$ $\PP$-a.s.
\end{enumerate}
\end{theorem}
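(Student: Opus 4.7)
\textbf{Parts \ref{SMFejer_i} and \ref{SMFejer_ii}.} The plan is to take expectations in \eqref{eq:SMFejer} and telescope. If $\inf_\iter \EE[\Fss(\bx^\iter)]=-\infty$, then $\Fss_*=-\infty$ as well and \ref{SMFejer_i} is trivial; otherwise the stated implication ensures \ref{SMFejer_b}. Taking $\EE[\cdot]$ in \eqref{eq:SMFejer} for an arbitrary $\bxx\in\dom\Fss$ and summing from $0$ to $N-1$, the squared-norm terms telescope and the error term is bounded by $b$, giving
\begin{equation*}
a\sum_{\iter=1}^{N}\bigl(\EE[\Fss(\bx^\iter)]-\Fss(\bxx)\bigr)\leq \norm{\bxx^0-\bxx}^2+b.
\end{equation*}
Monotonicity \ref{SMFejer_a} lower bounds the left-hand side by $aN(\EE[\Fss(\bx^N)]-\Fss(\bxx))$, and letting $N\to\infty$ and then minimizing over $\bxx\in\dom\Fss$ yields \ref{SMFejer_i}. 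For \ref{SMFejer_ii}, specializing to $\bxx\in\bSS_*$ turns $\Fss(\bxx)$ into $\Fss_*$ and optimizing the right-hand side over $\bxx\in\bSS_*$ returns the $O(1/n)$ bound with $\mathrm{dist}^2(\bxx^0,\bSS_*)$. The refinement to $o(1/\iter)$ follows from Fact~\ref{lem:monosequence} applied to $a_\iter:=\EE[\Fss(\bx^\iter)]-\Fss_*$, which is decreasing by \ref{SMFejer_a} and summable by the previous display.

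\textbf{Part \ref{SMFejer_iii}.} For $\bxx\in\bSS_*$ the term $\EE[\Fss(\bxx)-\Fss(\bx^{\iter+1})\,\vert\,\mathfrak{X}_\iter]$ in \eqref{eq:SMFejer} is nonpositive, leaving the stochastic quasi-Fej\'er inequality
\begin{equation*}
\EE[\norm{\bx^{\iter+1}-\bxx}^2\,\vert\,\mathfrak{X}_\iter]\leq \norm{\bx^\iter-\bxx}^2 + \xi_\iter\quad\PP\text{-a.s.}
\end{equation*}
Since $\sum_\iter\EE[\xi_\iter]<\infty$, the Robbins--Siegmund theorem gives that, for every fixed $\bxx\in\bSS_*$, $\norm{\bx^\iter-\bxx}^2$ converges $\PP$-a.s.\ to a finite random variable. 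I pick a countable dense subset $D\subset\bSS_*$ (available since $\HH$ is separable and $\bSS_*$ is closed and convex), intersect the corresponding a.s.\ events, and use the Lipschitz dependence $\abs{\norm{\bx^\iter-\bxx}^2-\norm{\bx^\iter-\byy}^2}\leq(\norm{\bx^\iter-\bxx}+\norm{\bx^\iter-\byy})\norm{\bxx-\byy}$ together with the a.s.\ boundedness of $(\bx^\iter)$ (already forced by convergence at a single point of $D$) to extend this convergence to every $\bxx\in\bSS_*$ on a single full-measure event $\Omega_0$. On $\Omega_0$ intersected with the a.s.\ event where \ref{SMFejer_c} holds, if $\bx^{\iter_k}\rightharpoonup \ww$ along a subsequence then $\by^{\iter_k}\rightharpoonup \ww$ too, and passing to the limit in the subgradient inequality $\Fss(\bzz)\geq\Fss(\by^{\iter_k})+\scalarp{\bv^{\iter_k},\bzz-\by^{\iter_k}}$---using weak lower semicontinuity of $\Fss$ and $\bv^{\iter_k}\to 0$ strongly---yields $\Fss(\bzz)\geq\Fss(\ww)$ for all $\bzz\in\HH$, so $\ww\in\bSS_*$. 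Opial's lemma then produces a pointwise weak limit $\bx_*(\omega)\in\bSS_*$, and measurability of $\bx_*$ follows from separability of $\HH$ via the countable family of real-valued limits $\scalarp{\bx^\iter,\bee_k}$ along a dense family $(\bee_k)_{k\in\N}$.

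\textbf{Main obstacle.} The delicate step is the transfer of the Fej\'er convergence from the countable set $D$ to the whole of $\bSS_*$: this is essential because the weak cluster points to which Opial's lemma is applied are random and are not known a priori to lie in $D$. The transfer works only because a.s.\ convergence at one point of $D$ already forces a.s.\ boundedness of $(\bx^\iter)$, which in turn makes the Lipschitz estimate above uniform in the realization. A related subtle point is the weak--strong sequential closure of the graph of $\partial\Fss$ invoked to show $\ww\in\bSS_*$: unlike the case of general maximal monotone operators in infinite dimension, it is available here without any assumption on $\Fss(\by^{\iter_k})$, thanks to the weak lower semicontinuity of $\Fss$ and the vanishing $\bv^{\iter_k}\to 0$.
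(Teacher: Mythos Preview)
Your proof is correct and follows the same overall strategy as the paper: take expectations in \eqref{eq:SMFejer} and telescope for \ref{SMFejer_i}--\ref{SMFejer_ii}, then drop the nonpositive term for $\bxx\in\bSS_*$ to obtain a stochastic quasi-Fej\'er inequality and identify the weak cluster points via \ref{SMFejer_c} for \ref{SMFejer_iii}. The only substantive difference is in \ref{SMFejer_iii}: the paper outsources the passage from the quasi-Fej\'er inequality to a.s.\ weak convergence by citing \cite[Proposition~2.3(iv)]{Com15} (which already packages the Robbins--Siegmund step, the countable-dense-set transfer, Opial's argument, and the measurability of the limit), and it identifies the cluster points by invoking directly the weak--strong sequential closedness of $\partial\Fss$. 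You instead unpack both of these ingredients from first principles---Robbins--Siegmund plus a density argument for the Fej\'er part, and the subgradient inequality combined with weak lower semicontinuity of $\Fss$ for the cluster-point part. Your route is more self-contained and makes explicit exactly the subtleties (uniformization over $\bSS_*$, measurability of $\bx_*$) that the cited lemma hides; the paper's route is shorter but relies on an external reference.
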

\begin{proof}
Taking the expectation in \eqref{eq:SMFejer}, we obtain
\begin{equation}
\label{eq:SMFejer2}
a ( \EE[\Fss(\bx^{\iter+1})] -\Fss(\bxx)) \leq  
 \EE[\norm{\bx^{\iter} - \bxx}^2 ] - \EE[\norm{\bx^{\iter+1} - \bxx}^2 ]
 +  \EE[\xi_n].
\end{equation}

\ref{SMFejer_i}:
Since $(\EE[\Fss(\bx^\iter)])_{n \in \N}$ is decreasing, 
$\EE[\Fss(\bx^\iter)]\to\inf_{n \in \N} \EE[\Fss(\bx^\iter)] \geq \Fss_*$. Thus, 
the statement is true if $\inf_{n \in \N} \EE[\Fss(\bx^\iter)] = -\infty$.
Suppose that  $\inf_{n \in \N} \EE[\Fss(\bx^\iter)]>- \infty$ and let $\bxx \in \dom \Fss$. Then, \ref{SMFejer_b} holds and
the right hand side of \eqref{eq:SMFejer2}, being summable, converges to zero.
Therefore,
$\Fss_* \leq \lim_{n \to +\infty} \EE[\Fss(\bx^{\iter+1})] \leq \Fss(\bxx)$. Since $\bxx$
is arbitrary in $\dom \Fss$, $\EE[\Fss(\bx^{\iter})]\to \Fss_*$.

\ref{SMFejer_ii}:
Let $\bxx \in \bSS_*$. Then,  $\inf_{n \in \N} \EE[\Fss(\bx^\iter)]\geq \Fss(\bxx)>- \infty$.
Hence \ref{SMFejer_b} holds and \eqref{eq:SMFejer2} yields
\begin{align*}
a \sum_{\iter \in \N} \big(\EE[\Fss(\bx^{\iter+1})] - \Fss_* \big) 
&\leq \EE \big[ \norm{\bx^0 -  \bxx}^2\big] 
+ \sum_{\iter \in \N} \EE[\xi_{\iter}]
\leq \norm{\bxx^0 -  \bxx}^2
+ b.
\end{align*}
Therefore, $\sum_{\iter \in \N} (\EE[\Fss(\bx^{\iter+1})] - \Fss_* ) 
\leq (1/a) \norm{\bxx^0 -  \bxx}^2  + b/a$.
Since $(\EE[\Fss(\bx^{\iter+1})] - \Fss_*)_{\iter \in \N}$ is decreasing, 
the statement follows from Fact~\ref{lem:monosequence}.

\ref{SMFejer_iii}:
Let $\bxx \in \bSS_*$. Then \ref{SMFejer_b} holds and,
since $\Fss(\bxx) \leq \Fss(\bx^{\iter+1})$,
we derive from \eqref{eq:SMFejer} that, 
\begin{equation}
(\forall\, \iter \in \N)\quad
\EE\big[\norm{\bx^{\iter+1} - \bxx}^2\,\vert\, 
\mathfrak{X}_{\iter}\big]
\leq
 \norm{\bx^\iter -  \bxx}^2 + \xi_\iter \quad \PP\text{-a.s.}
\end{equation}
Note that  $\xi_\iter$ and $ \norm{\bx^\iter -  \bxx}^2$ 
are $\mathfrak{X}_{\iter}$-measurable.
Moreover $\EE[\sum_{\iter \in \N} \xi_\iter] = \sum_{\iter \in \N} \EE[\xi_\iter]<+\infty$
and hence $\sum_{\iter \in \N} \xi_\iter<+\infty$ $\PP$-a.s.
Therefore $(\bx^\iter)_{\iter \in \N}$ is a stochastic quasi-Fej\'er sequence with respect to $\bSS_*$ 
\cite{Erm69}. 
Then, in view of \cite[Proposition~2.3(iv)]{Com15} it is sufficient to prove that 
 the weak limit points of 
$(\bx^\iter)_{\iter \in \N}$ are contained in $\bSS_*$ $\PP$-a.s.
By assumption \ref{SMFejer_c} there exist two sequences of 
$\bHH$-valued random variables
 $(\by^\iter)_{\iter \in \N}$
and $(\bv^\iter)_{\iter \in \N}$  and $\tilde{\Omega} \subset \Omega$, $\PP(\tilde{\Omega})=1$
such that, for every $\omega \in \tilde{\Omega}$, 
 $\bv^\iter(\omega)\in \partial \Fss(\by^{\iter}(\omega))$,
$\by^{\iter}(\omega) - \bx^{\iter}(\omega) \rightharpoonup 0$, $\bv^{\iter}(\omega) \to 0$.
Let $\omega \in \tilde{\Omega}$ and let $(\bx^{n_k}(\omega))_{\iter \in \N}$ 
be a subsequence of $(\bx^\iter(\omega))_{\iter \in \N}$ such that
 $\bx^{n_k}(\omega) \rightharpoonup \bar{\bxx}$, for some $\bar{\bxx} \in \bHH$.
Then, 
\begin{equation*}
\by^{n_k}(\omega) \rightharpoonup \bar{\bxx}, \bv^{n_k}(\omega) \to 0, \ 
\bv^{n_k}(\omega) \in \partial \Fss(\by^{n_k}(\omega)).
\end{equation*}
Since $\partial \Fss$ is weakly-strongly closed \cite{book1}, we have $0 \in \partial \Fss(\bar{\bxx})$, 
so $\bar{\bxx} \in \bSS_*$.
\end{proof}

\begin{remark}
 Inequalities similar to \eqref{eq:SMFejer} appear implicitly in 
the analysis of several deterministic and stochastic 
 algorithms \cite{Ber11,Kiw06,Nem09}, to get rate of convergence for the function values. Moreover, \eqref{eq:SMFejer} is related also to the concept introduced in \cite{Lin18},
 in a deterministic setting. 
\end{remark}

\subsection{Convergence under convexity and strong convexity assumptions}

In this section we address the convergence of Algorithm~\ref{algoRCD} in the convex and strongly convex case. The main results consist in the $o(1/\iter)$ rate of convergence 
for the mean of the function values and in the 
almost sure weak convergence of the iterates.
We start by recalling a standard result
(see \cite[Lemma~3.12(iii)]{Sal17}). Here we give a slightly
more general version, including the moduli of strong convexity.
The proof is given in Appendix~\ref{sec:appB} for reader's convenience.
\begin{lemma}
\label{lem:20190313b}
Let $\HHs$ be a real Hilbert space.
Let $\varphi\colon \HHs \to \R$ be differentiable and convex with modulus of strong convexity $\mu_\varphi \geq 0$
and $\psi\colon \HHs\to \left]-\infty,+\infty\right]$ be proper, lower semicontinuous,
and convex with modulus of strong convexity $\mu_\psi \geq 0$. 
Let $\xx \in \HHs$ and set $\xx^+ = \prox_{\psi}(\xx - \nabla \varphi(\xx))$. 
Then, 
for every $\zz \in \HHs$,
\begin{align*}
\nonumber(1 + \mu_\psi) \scalarp{\xx - \xx^+ , \zz - \xx} 
&\leq  \Big((\varphi + \psi)(\zz) - (\varphi + \psi)(\xx)  - \frac{\mu_\varphi + \mu_\psi}{2} \norm{\zz - \xx}^2 \Big)\\[1ex]
&\ + \big(\psi(\xx) - \psi(\xx^+) +  \scalarp{\nabla \varphi(\xx), \xx - \xx^+}\big) 
-  \Big(1 + \frac{\mu_\psi}{2}\Big) \norm{\xx - \xx^+}^2.
\end{align*}
\end{lemma}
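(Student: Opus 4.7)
The plan is to combine the subgradient characterization of $\xx^+$ with the two strong convexity inequalities for $\varphi$ and $\psi$, then perform a short algebraic massage to get the stated grouping of terms.

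First I would use the optimality condition for the prox: since $\xx^+ = \prox_\psi(\xx - \nabla\varphi(\xx))$, we have $\xx - \xx^+ - \nabla\varphi(\xx) \in \partial\psi(\xx^+)$. By strong convexity of $\psi$ at $\xx^+$ with this subgradient, for every $\zz \in \HHs$,
\begin{equation*}
\psi(\zz) \geq \psi(\xx^+) + \scalarp{\xx - \xx^+ - \nabla\varphi(\xx),\, \zz - \xx^+} + \tfrac{\mu_\psi}{2}\norm{\zz - \xx^+}^2.
\end{equation*}
Simultaneously, strong convexity of $\varphi$ at $\xx$ gives $\varphi(\zz) \geq \varphi(\xx) + \scalarp{\nabla\varphi(\xx), \zz - \xx} + \tfrac{\mu_\varphi}{2}\norm{\zz - \xx}^2$. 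I would then add these two inequalities, observing that the $\nabla\varphi(\xx)$ terms combine to leave $-\scalarp{\nabla\varphi(\xx), \xx - \xx^+}$, so that
\begin{equation*}
\varphi(\zz) + \psi(\zz) \geq \varphi(\xx) + \psi(\xx^+) - \scalarp{\nabla\varphi(\xx), \xx - \xx^+} + \scalarp{\xx - \xx^+, \zz - \xx^+} + \tfrac{\mu_\varphi}{2}\norm{\zz - \xx}^2 + \tfrac{\mu_\psi}{2}\norm{\zz - \xx^+}^2.
\end{equation*}

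Next I would rewrite the two inner-product/norm quantities with base point $\xx$ instead of $\xx^+$. Using the identities $\scalarp{\xx - \xx^+, \zz - \xx^+} = \scalarp{\xx - \xx^+, \zz - \xx} + \norm{\xx - \xx^+}^2$ and $\norm{\zz - \xx^+}^2 = \norm{\zz - \xx}^2 + 2\scalarp{\zz - \xx, \xx - \xx^+} + \norm{\xx - \xx^+}^2$, the cross term picks up a factor $1 + \mu_\psi$ and the $\norm{\xx - \xx^+}^2$ coefficient becomes $1 + \mu_\psi/2$, while the $\norm{\zz - \xx}^2$ coefficient aggregates to $(\mu_\varphi + \mu_\psi)/2$. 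Finally I would write $-\psi(\xx^+) = -\psi(\xx) + (\psi(\xx) - \psi(\xx^+))$ so that $(\varphi+\psi)(\xx)$ appears on the right-hand side in the required grouping, and a simple rearrangement yields the claimed inequality.

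The only obstacle is bookkeeping: one must be careful not to mix up signs when expanding $\scalarp{\xx - \xx^+ - \nabla\varphi(\xx), \zz - \xx^+}$ and when switching the base point from $\xx^+$ to $\xx$, since the coefficient $1 + \mu_\psi$ of the cross term only appears after the $\mu_\psi$-contribution from expanding $\norm{\zz - \xx^+}^2$ is added to the $1$ already produced by the rearrangement of $\scalarp{\xx - \xx^+, \zz - \xx^+}$. Beyond this, the argument is a direct two-line combination of standard strong convexity inequalities.
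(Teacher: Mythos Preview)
Your proposal is correct and follows essentially the same approach as the paper: both arguments use the prox characterization $\xx-\xx^+-\nabla\varphi(\xx)\in\partial\psi(\xx^+)$, the strong convexity inequalities for $\psi$ at $\xx^+$ and for $\varphi$ at $\xx$, and the same polarization identities to shift the base point from $\xx^+$ to $\xx$. The only cosmetic difference is ordering---you add the $\varphi$-inequality at the outset, whereas the paper first isolates $\scalarp{\xx-\xx^+,\zz-\xx^+}$ and only invokes $\scalarp{\nabla\varphi(\xx),\zz-\xx}\leq \varphi(\zz)-\varphi(\xx)-\tfrac{\mu_\varphi}{2}\norm{\zz-\xx}^2$ at the very end; the algebra and the resulting coefficients are identical.
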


\begin{proposition}
\label{p:20190610a}
 Let \ref{eq:A1}--\ref{eq:A3} be satisfied.
Let $(\nu_{i})_{1 \leq i \leq m} \in \R^m_{++}$ and suppose that 
\ref{eq:S1} holds.
Let $(\bx^\iter)_{\iter \in \N}$ be generated by  Algorithm~\ref{algoRCD}
with, for every $i \in [m]$, $\gamma_i < 2/\nu_i$.
Set $\delta = \max_{1 \leq i \leq m} \gamma_i  \nu_i$
 and 
$\pp_{\min} = \min_{1 \leq i \leq m} \pp_i$.
Let $\Gammas^{-1}$ be as in \eqref{eq:W} and 
 $\mu_{\Gammas^{-1}}$ and $\sigma_{\Gammas^{-1}}$ be the moduli of strong convexity of 
$\bfs$ and $\bgs$ respectively, in the norm $\norm{\cdot}_{\Gammas^{-1}}$.
Set $\Fs = \bfs+\bgs$.
Then, 
\begin{align}
\nonumber
(1 + \sigma_{\Gammas^{-1}})) \scalarp{\bx^{\iter} - \bar{\bx}^{\iter+1}, \bxx - \bx^{\iter}}_{\Gammas^{-1}} 
&\leq  \frac{1}{\pp_{\min}}\EE \big[ \Fs(\bx^n) - \Fs(\bx^{n+1}) \,\vert\, \Fsc_{\iter-1}  \big]  \\[1ex]
\nonumber&\qquad + \bigg( \Fs(\bxx) -\Fs(\bx^\iter)
- \frac{\mu_{\Gammas^{-1}} + \sigma_{\Gammas^{-1}}}{2} \norm{\bx^\iter - \bxx}_{\bGammas^{-1}}^2 \bigg)\\[1ex]
\label{eq:20190610a}&\qquad +
\frac{\delta - 2 - \sigma_{\Gammas^{-1}}}{2} \norm{\bx^{\iter} - \bar{\bx}^{\iter+1}}_{\bGammas^{-1}}^2.
\end{align}
\end{proposition}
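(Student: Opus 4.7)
The plan is to apply Lemma~\ref{lem:20190313b} in the Hilbert space $(\bHH, \scalarp{\cdot,\cdot}_{\bGammas^{-1}})$, with $\bfs, \bgs$ in the roles of $\varphi, \psi$ and $\bx^\iter, \bxx$ in the roles of $\xx, \zz$. Since the weighted proximal problem decouples blockwise with stepsize $\gamma_i$, one has $\prox_{\bgs}^{\bGammas^{-1}}(\bx^\iter - \nabla^{\bGammas^{-1}} \bfs(\bx^\iter)) = \bar{\bx}^{\iter+1}$; the strong-convexity moduli are $\mu_\varphi = \mu_{\Gammas^{-1}}$ and $\mu_\psi = \sigma_{\Gammas^{-1}}$; and the identity $\scalarp{\nabla^{\bGammas^{-1}} \bfs(\bx^\iter), \bvv}_{\bGammas^{-1}} = \scalarp{\nabla \bfs(\bx^\iter), \bvv}$ returns the standard inner product. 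Setting $A_i := \gs_i(x_i^\iter) - \gs_i(\bar{x}_i^{\iter+1}) + \scalarp{\nabla_i \bfs(\bx^\iter), \Delta_i^\iter}$, Lemma~\ref{lem:20190313b} then gives
\begin{align*}
(1 + \sigma_{\Gammas^{-1}}) \scalarp{\bx^\iter - \bar{\bx}^{\iter+1}, \bxx - \bx^\iter}_{\bGammas^{-1}}
&\leq \Big(\Fs(\bxx) - \Fs(\bx^\iter) - \tfrac{\mu_{\Gammas^{-1}} + \sigma_{\Gammas^{-1}}}{2} \norm{\bx^\iter - \bxx}_{\bGammas^{-1}}^2 \Big) \\
&\quad + \sum_{i=1}^m A_i - \Big(1 + \tfrac{\sigma_{\Gammas^{-1}}}{2}\Big) \norm{\bDelta^\iter}_{\bGammas^{-1}}^2.
\end{align*}
Matching this to \eqref{eq:20190610a}, the proof reduces to the key estimate
\begin{equation*}
\sum_{i=1}^m A_i \leq \frac{1}{\pp_{\min}} \EE\big[\Fs(\bx^\iter) - \Fs(\bx^{\iter+1}) \,\big\vert\, \Fsc_{\iter-1}\big] + \frac{\delta}{2} \norm{\bDelta^\iter}_{\bGammas^{-1}}^2.
\end{equation*}

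To derive the key estimate, I would compute $\EE[\Fs(\bx^{\iter+1}) \,\vert\, \Fsc_{\iter-1}]$. Applying \ref{eq:S1} with $\bvv = -\bDelta^\iter$, via Fact~\ref{f:20190112d} (which applies since $\bvarepsilon^\iter$ is independent of $\Fsc_{\iter-1}$ while $\bx^\iter, \bar{\bx}^{\iter+1}$ are $\Fsc_{\iter-1}$-measurable), yields
\begin{equation*}
\EE[\bfs(\bx^{\iter+1}) \,\vert\, \Fsc_{\iter-1}] \leq \bfs(\bx^\iter) - \sum_{i=1}^m \pp_i \scalarp{\nabla_i \bfs(\bx^\iter), \Delta_i^\iter} + \tfrac{1}{2}\sum_{i=1}^m \pp_i \nu_i \norm{\Delta_i^\iter}^2.
\end{equation*}
For $\bgs$, its separability together with the observation that $x_i^{\iter+1}$ equals $\bar{x}_i^{\iter+1}$ on $\{\varepsilon_i^\iter = 1\}$ and $x_i^\iter$ otherwise gives $\EE[\bgs(\bx^{\iter+1}) \,\vert\, \Fsc_{\iter-1}] = \bgs(\bx^\iter) - \sum_i \pp_i(\gs_i(x_i^\iter) - \gs_i(\bar{x}_i^{\iter+1}))$. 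Summing,
\begin{equation*}
\Fs(\bx^\iter) - \EE[\Fs(\bx^{\iter+1}) \,\vert\, \Fsc_{\iter-1}] \geq \sum_{i=1}^m \pp_i B_i, \qquad B_i := A_i - \tfrac{\nu_i}{2}\norm{\Delta_i^\iter}^2.
\end{equation*}

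The crucial step is the nonnegativity $B_i \geq 0$. For this I exploit the proximal characterization of $\bar{x}_i^{\iter+1}$: first-order optimality gives $\gamma_i^{-1}\Delta_i^\iter - \nabla_i \bfs(\bx^\iter) \in \partial \gs_i(\bar{x}_i^{\iter+1})$, and convexity of $\gs_i$ evaluated at $x_i^\iter$ then produces the tight bound $A_i \geq \gamma_i^{-1} \norm{\Delta_i^\iter}^2$. Combined with $\gamma_i \nu_i \leq \delta < 2$ this gives $B_i \geq (2 - \gamma_i \nu_i)/(2\gamma_i) \cdot \norm{\Delta_i^\iter}^2 \geq 0$. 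Since $\pp_i \geq \pp_{\min}$ I therefore have $\sum_i B_i \leq \pp_{\min}^{-1} \sum_i \pp_i B_i \leq \pp_{\min}^{-1} \EE[\Fs(\bx^\iter) - \Fs(\bx^{\iter+1}) \,\vert\, \Fsc_{\iter-1}]$; unpacking $B_i$ and using $\nu_i \leq \delta/\gamma_i$, i.e.\ $\sum_i (\nu_i/2)\norm{\Delta_i^\iter}^2 \leq (\delta/2) \norm{\bDelta^\iter}_{\bGammas^{-1}}^2$, delivers the key estimate. Substituting back into the output of Lemma~\ref{lem:20190313b} completes the proof.

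I expect the main obstacle to be the nonuniformity of the probabilities $\pp_i$. A direct bound $\sum_i A_i \leq \pp_{\min}^{-1} \sum_i \pp_i A_i$ would leave a residual $\pp_{\min}^{-1} \sum_i \pp_i \nu_i \norm{\Delta_i^\iter}^2$ whose blockwise coefficients $\pp_i \nu_i/\pp_{\min}$ generally exceed $\delta/\gamma_i$, and hence cannot be absorbed into $\delta \norm{\bDelta^\iter}_{\bGammas^{-1}}^2$. Folding the Lipschitz residual into the nonnegative quantity $B_i$ \emph{before} applying the Jensen-style comparison $\pp_{\min}\sum_i B_i \leq \sum_i \pp_i B_i$ sidesteps this loss; the mechanism is that the prox estimate $A_i \geq \gamma_i^{-1}\norm{\Delta_i^\iter}^2$ pairs exactly with the stepsize condition $\gamma_i \nu_i < 2$ to ensure $B_i \geq 0$, which is what keeps the constant $\delta/2$ (rather than $\delta/(2\pp_{\min})$) on the residual.
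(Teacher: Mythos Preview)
Your proof is correct and reaches the same inequality as the paper, but the route differs in how the nonuniform probabilities are handled. The paper writes
\[
\sum_{i=1}^m A_i
= \EE\Big[\sum_{i=1}^m \tfrac{\varepsilon_i^\iter}{\pp_i} A_i \,\Big\vert\, \Fsc_{\iter-1}\Big]
= \EE\Big[\tfrac{1}{\pp_{\min}}\sum_{i} \varepsilon_i^\iter A_i \,\Big\vert\, \Fsc_{\iter-1}\Big]
- \EE\Big[\sum_{i}\big(\tfrac{1}{\pp_{\min}}-\tfrac{1}{\pp_i}\big)\varepsilon_i^\iter A_i \,\Big\vert\, \Fsc_{\iter-1}\Big],
\]
bounds the second piece via the prox inequality (carrying $\sigma_{\Gammas^{-1}}$ through), then applies \ref{eq:S1} to the first piece; the $\sigma_{\Gammas^{-1}}$-contributions cancel at the end. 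Your argument instead computes $\Fs(\bx^\iter)-\EE[\Fs(\bx^{\iter+1})\mid\Fsc_{\iter-1}]\geq \sum_i \pp_i B_i$ directly from \ref{eq:S1} and separability, observes $B_i\geq 0$ from plain convexity of $\gs_i$, and then uses $\pp_{\min}\sum_i B_i\leq \sum_i \pp_i B_i$. This is a genuinely different (and somewhat cleaner) decomposition: you never need strong convexity of $\bgs$ in the key estimate, and you avoid the intermediate $\sum_i \pp_i\gamma_i^{-1}\norm{\Delta_i^\iter}^2$ term that the paper introduces and then coarsens. The paper's route, on the other hand, keeps the computations inside a single conditional expectation throughout, which is a little more natural if one wants to track the pathwise quantities (as later reused in Proposition~\ref{p:20181219c}). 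Both yield exactly the same constants in~\eqref{eq:20190610a}.
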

\begin{proof}
Let $\bxx \in \dom \Fs$ and $\iter \in \N$. Since for all $\bvv \in \bHH$, $\scalarp{\nabla^{\bGammas^{-1}} \bfs (\bx^n), \bvv}_{\bGammas^{-1}} =
 \scalarp{\nabla \bfs(\bx^n), \bvv}$, we 
derive from Lemma~\ref{lem:20190313b},
 written in the norm $\norm{\cdot}_{\bGammas^{-1}}$, and
 \eqref{eq:20181114c} that
\begin{align}
\nonumber
(1 + \sigma_{\Gammas^{-1}})) \scalarp{\bx^{\iter} - \bar{\bx}^{\iter+1}, \bxx - \bx^{\iter}}_{\Gammas^{-1}} 
&\leq  \big(\bgs(\bx^n) - \bgs(\bar{\bx}^{n+1}) +  \scalarp{\nabla \bfs(\bx^n), \bx^\iter - \bar{\bx}^{\iter+1}}\big)  \\[0.8ex]
\nonumber&\qquad + \Big( \Fs(\bxx) -\Fs(\bx^\iter)
- \frac{\mu_{\Gammas^{-1}} + \sigma_{\Gammas^{-1}}}{2} \norm{\bx^\iter - \bxx}_{\bGammas^{-1}}^2 \Big)\\[0.8ex]
\label{eq:20190313f}&\qquad - \Big(1 + \frac{\sigma_{\Gammas^{-1}}}{2} \Big)\norm{\bx^{\iter} - \bar{\bx}^{\iter+1}}_{\bGammas^{-1}}^2.
\end{align}
Now, we majorize $\bgs(\bx^n) - \bgs(\bar{\bx}^{n+1}) +  \scalarp{\nabla \bfs(\bx^n), \bx^n - \bar{\bx}^{n+1}}$. By
Fact~\ref{f:20190112d} and  Fact~\ref{lem:20181024a}, we have
\begin{align*}
\bgs(\bx^n) - \bgs(\bar{\bx}^{n+1}) &+  \scalarp{\nabla \bfs(\bx^n), \bx^n - \bar{\bx}^{n+1}}\\
& = \EE \bigg[ \sum_{i=1}^m \frac{\varepsilon^n_i}{\pp_i} 
\Big( \gs_i(x_i^n) - \gs_i(\bar{x}_i^{n+1}) + \scalarp{\nabla_i \bfs(\bx^n), x_i^n - \bar{x}_i^{n+1}} \Big) \,\Big\vert\, \Fsc_{\iter-1} \bigg].
\end{align*}
Moreover,
\begin{align*}
\sum_{i=1}^m \frac{\varepsilon^n_i}{\pp_i} 
&\Big( \gs_i(x_i^n) - \gs_i(\bar{x}_i^{n+1}) + \scalarp{\nabla_i \bfs(\bx^n), x_i^n - \bar{x}_i^{n+1}} \Big)\\
& =  \sum_{i=1}^m \frac{1}{\pp_i} 
\Big( \gs_i(x_i^n) - \gs_i(x_i^{n+1}) 
+ \scalarp{\nabla_i \bfs(\bx^n), x_i^n - x_i^{n+1}} \Big)\\[0.8ex]
& = \frac{1}{\pp_{\min}}
\Big( \bgs(\bx^n) - \bgs(\bx^{n+1}) +  \scalarp{\nabla \bfs(\bx^n), \bx^\iter - \bx^{\iter+1}} \Big)\\[0.8ex]
 &\qquad - \sum_{i=1}^m  \bigg(\underbrace{\frac{1}{\pp_{\min}} - \frac{1}{\pp_{i}}}_{\geq 0} \bigg)
\Big( \gs_i(x_i^n) - \gs_i(x_i^{n+1}) 
+ \scalarp{\nabla_i \bfs(\bx^n), x_i^n - x_i^{n+1}} \Big)\\[0.8ex]
&\leq \frac{1}{\pp_{\min}}
\Big( \bgs(\bx^n) - \bgs(\bx^{n+1}) 
+  \scalarp{\nabla \bfs(\bx^n), \bx^\iter - \bx^{\iter+1}} \Big)\\[0.8ex]
 &\qquad - \bigg( 1 + \frac{\sigma_{\Gammas^{-1}}}{2} \bigg)\sum_{i=1}^m \bigg(\frac{1}{\pp_{\min}} - \frac{1}{\pp_{i}} \bigg)
 \frac{\varepsilon^\iter_i}{\gamma_i} \norm{\Delta_i^\iter}^2,
\end{align*}
where in the last inequality we used that
\begin{equation}
\label{eq:20181108b}
-\big(\gs_i(x_i^{\iter}) - \gs_i(x_i^{\iter+1}) + \scalarp{\nabla_i \bfs(\bx^\iter), x_i^\iter - x_i^{\iter+1}} 
 \big) 
\leq - \frac{\varepsilon^\iter_i}{\gamma_i}
 \bigg( 1 + \frac{\sigma_{\Gammas^{-1}}}{2} \bigg)\norm{\Delta_i^\iter}^2,
\end{equation}
which was obtained from 
\eqref{eq:20181112a} with
\begin{equation*}
\xx_i = x_i^\iter, \quad
\yy_i = x_i^{\iter+1},\quad
\vv_i = \frac{x^\iter_{i} - x^{\iter+1}_{i}}{\gamma_{i}} - \nabla_{i} \bfs (\bx^\iter) 
\in \partial \gs_{i}(x^{\iter+1}_{i}),\quad\text{for }\varepsilon^\iter_i=1.
\end{equation*}
Therefore, 
\begin{align}
\nonumber \bgs(\bx^n) - \bgs(\bar{\bx}^{n+1}) 
&+  \scalarp{\nabla \bfs(\bx^n), \bx^n - \bar{\bx}^{n+1}}\\
\nonumber&\leq \frac{1}{\pp_{\min}}\EE \big[ \bgs(\bx^n) - \bgs(\bx^{n+1}) +  \scalarp{\nabla \bfs(\bx^n), \bx^n - \bx^{n+1}} \,\vert\, \Fsc_{\iter-1}  \big]\\[1ex]
\label{eq:20190313i}&\qquad - \frac{1}{\pp_{\min}}\bigg( 1 + \frac{\sigma_{\Gammas^{-1}}}{2} \bigg) \sum_{i=1}^m  
 \frac{\pp_i}{\gamma_i} \norm{\Delta_i^\iter}^2 
 + \bigg( 1 + \frac{\sigma_{\Gammas^{-1}}}{2} \bigg) \norm{\bar{\bx}^{n+1} - \bx^n}^2_{\bGammas^{-1}}.
\end{align}
Next, it follows from \eqref{eq:20181114c}, \ref{eq:S1}, and Fact~\ref{f:20190112d} that
\begin{equation*}
\EE[\scalarp{\nabla \bfs(\bx^\iter), \bx^{\iter} - \bx^{\iter+1}} \,\vert\, \Fsc_{\iter-1}]
 \leq \EE[\bfs(\bx^\iter) -  \bfs(\bx^{\iter+1}) \,\vert\, \Fsc_{\iter-1}]
+ \frac{1}{2}\sum_{i=1}^m \pp_i \nu_i \norm{\Delta_i^\iter}^2.
\end{equation*}
Then, we derive from \eqref{eq:20190313i} that
\begin{align*}
 \bgs(\bx^n) - \bgs(\bar{\bx}^{n+1}) 
&+  \scalarp{\nabla \bfs(\bx^n), \bx^n - \bar{\bx}^{n+1}}\\
&\leq \frac{1}{\pp_{\min}}\EE \big[ \Fs(\bx^n) - \Fs(\bx^{n+1}) \,\vert\, \Fsc_{\iter-1}  \big]\\[1ex]
&\qquad - \frac{1}{2\pp_{\min}}\sum_{i=1}^m  \bigg( 2 + \sigma_{\Gammas^{-1}} 
- \gamma_i \nu_i \bigg) 
 \frac{\pp_i}{\gamma_i} \norm{\Delta_i^\iter}^2 
 + \bigg( 1 + \frac{\sigma_{\Gammas^{-1}}}{2} \bigg) \norm{\bar{\bx}^{n+1} - \bx^n}^2_{\bGammas^{-1}}.
\end{align*}
The statement follows from \eqref{eq:20190313f}, considering that
\begin{align*}
\frac{1}{\pp_{\min}} \sum_{i=1}^m \bigg(\gamma_i\nu_i - 2  - \sigma_{\Gammas^{-1}}\bigg) 
\frac{\pp_i}{\gamma_i}\norm{\Delta_i^\iter}^2 
&\leq
\underbrace{\frac{\delta - 2 - \sigma_{\Gammas^{-1}}}{\pp_{\min}}}_{\leq0} \sum_{i=1}^m  
\frac{\pp_i}{\gamma_i}\norm{\Delta_i^\iter}^2\\
&\leq 
\frac{\delta - 2 - \sigma_{\Gammas^{-1}}}{\pp_{\min}} \sum_{i=1}^m  
\frac{\pp_{\min}}{\gamma_i}\norm{\Delta_i^\iter}^2\\
&=(\delta - 2 - \sigma_{\Gammas^{-1}}) \norm{\bx^n - \bar{\bx}^{n+1}}^2_{\bGammas^{-1}}.
\qedhere
\end{align*}
\end{proof}

\begin{proposition}
\label{p:20190313c}
 Let \ref{eq:A1}--\ref{eq:A3} be satisfied.
 Let $\bGammas^{-1}$ and $\bWW$ be as in \eqref{eq:W} and
 $(\bx^\iter)_{\iter \in \N}$ be generated by Algorithm~\ref{algoRCD}.
Let $n \in \N$ and $\bx$ be an $\bHH$-valued  random variable
which is measurable w.r.t. $\Fsc_{\iter-1}$. Then
\begin{equation}
\label{eq:20190313c}
\EE[\norm{\bx^{\iter+1} - \bx}_\bWW^2 \,\vert\, \Fsc_{\iter-1} ] -  \norm{\bx^\iter - \bx}^2_\bWW= 
\norm{\bar{\bx}^{\iter+1} - \bx}^2_{\bGammas^{-1}} 
 - \norm{\bx^\iter - \bx}^2_{\bGammas^{-1}}
\end{equation}
and
$\EE[\norm{\bx^{\iter+1} - \bx^\iter}_\bWW^2 \,\vert\, \Fsc_{\iter-1} ]
= \norm{\bar{\bx}^{\iter+1} - \bx^\iter}^2_{\bGammas^{-1}}$.
\end{proposition}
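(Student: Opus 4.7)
The plan is a direct block-by-block computation exploiting three facts: the update (3.5) acts diagonally on blocks, the Bernoulli random variables satisfy $(\varepsilon_i^\iter)^2 = \varepsilon_i^\iter$, and the weights are coupled via $\pp_i w_i = 1/\gamma_i$, which is exactly what converts the $\bWW$-norm into the $\bGammas^{-1}$-norm under conditional expectation.

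First I would fix $i \in [m]$ and expand, using $x^{\iter+1}_i - x_i = (x^\iter_i - x_i) + \varepsilon^\iter_i(\bar{x}^{\iter+1}_i - x^\iter_i)$ together with $(\varepsilon_i^\iter)^2 = \varepsilon_i^\iter$, to obtain
\begin{equation*}
\norm{x^{\iter+1}_i - x_i}^2 = \norm{x^\iter_i - x_i}^2 + \varepsilon^\iter_i\bigl(2\scalarp{x^\iter_i - x_i, \bar{x}^{\iter+1}_i - x^\iter_i} + \norm{\bar{x}^{\iter+1}_i - x^\iter_i}^2\bigr).
\end{equation*}
The parenthesized term equals $\norm{\bar{x}^{\iter+1}_i - x_i}^2 - \norm{x^\iter_i - x_i}^2$ by polarization. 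Now $\bar{\bx}^{\iter+1}$, $\bx^\iter$ and $\bx$ are $\Fsc_{\iter-1}$-measurable while $\bvarepsilon^\iter$ is independent of $\Fsc_{\iter-1}$, so Fact~\ref{f:20190112d} combined with Fact~\ref{lem:20181024a} gives $\EE[\varepsilon^\iter_i \mid \Fsc_{\iter-1}] = \pp_i$. Conditioning on $\Fsc_{\iter-1}$ therefore yields
\begin{equation*}
\EE\bigl[\norm{x^{\iter+1}_i - x_i}^2 \,\big\vert\, \Fsc_{\iter-1}\bigr] = \norm{x^\iter_i - x_i}^2 + \pp_i\bigl(\norm{\bar{x}^{\iter+1}_i - x_i}^2 - \norm{x^\iter_i - x_i}^2\bigr).
\end{equation*}

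Next I would multiply by $w_i = 1/(\gamma_i \pp_i)$ and sum over $i$. On the right-hand side, the first term gives $\norm{\bx^\iter - \bx}^2_\bWW$, while the factor $\pp_i w_i = 1/\gamma_i$ collapses the remaining terms into $\norm{\bar{\bx}^{\iter+1} - \bx}^2_{\bGammas^{-1}} - \norm{\bx^\iter - \bx}^2_{\bGammas^{-1}}$, establishing~\eqref{eq:20190313c}. The second identity is then immediate by specializing to $\bx = \bx^\iter$, for which both $\bGammas^{-1}$-norms in the right-hand side vanish except for $\norm{\bar{\bx}^{\iter+1} - \bx^\iter}^2_{\bGammas^{-1}}$.

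There is no real obstacle here, so this is essentially bookkeeping; the only point requiring care is the independence argument for evaluating $\EE[\varepsilon^\iter_i \mid \Fsc_{\iter-1}]$, which is exactly what Facts~\ref{f:20190112d} and~\ref{lem:20181024a} were set up to handle, and the choice of the weights $w_i$, which has been deliberately tuned so that the $\pp_i$ that appears from the Bernoulli expectation cancels the $1/\pp_i$ in $w_i$, producing precisely the $\bGammas^{-1}$ weights on the right.
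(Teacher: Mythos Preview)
Your proof is correct and follows essentially the same approach as the paper: a block-wise computation exploiting $\varepsilon_i^\iter\in\{0,1\}$, the independence of $\bvarepsilon^\iter$ from $\Fsc_{\iter-1}$ (via Facts~\ref{f:20190112d} and~\ref{lem:20181024a}), and the identity $\pp_i w_i = 1/\gamma_i$. The only cosmetic difference is that the paper splits $\norm{x^{\iter+1}_i - x_i}^2$ directly into the two cases $\varepsilon_i^\iter=0$ and $\varepsilon_i^\iter=1$, whereas you expand the square algebraically and then repackage via polarization; these are equivalent one-line manipulations.
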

\begin{proof}
If follows from \eqref{eq:20181114c}, Fact~\ref{f:20190112d}, 
and Fact~\ref{lem:20181024a} that
\begin{align*}
\EE[\norm{\bx^{\iter+1} - \bx}_\bWW^2 \,\vert\, \Fsc_{\iter-1} ]
&= \EE\bigg[ \sum_{i=1}^m \frac{1}{\gamma_i \pp_i} \norm{x^{\iter+1}_i - x_i}^2
\,\big\vert\, \Fsc_{\iter-1}  \bigg]\\
& = \EE\bigg[ \sum_{i=1}^m \frac{\varepsilon_i^\iter}{\gamma_i \pp_i} \norm{\bar{x}^{\iter+1}_i - x_i}^2
\,\big\vert\, \Fsc_{\iter-1}  \bigg]
+ \EE\bigg[ \sum_{i=1}^m \frac{1 -\varepsilon_i^\iter}{\gamma_i \pp_i} \norm{x^{\iter}_i - x_i}^2
\,\big\vert\, \Fsc_{\iter-1}  \bigg]\\
& = \norm{\bar{\bx}^{\iter+1} - \bx}^2_{\bGammas^{-1}} 
+ \norm{\bx^\iter - \bx}^2_W - \norm{\bx^\iter - \bx}^2_{\bGammas^{-1}}.
\end{align*}
The second equation follows from \eqref{eq:20190313c}, by choosing $\bx = \bx^\iter$.
\end{proof}

The following result is a stochastic version of \cite[Proposition~3.15]{Sal17}.

\begin{proposition}
 \label{p:20180927a}
 Let \ref{eq:A1}--\ref{eq:A3} be satisfied.
Let $(\nu_{i})_{1 \leq i \leq m} \in \R^m_{++}$ and suppose that 
\ref{eq:S1} holds.
Let $(\bx^\iter)_{\iter \in \N}$ be generated by  Algorithm~\ref{algoRCD}
with, for every $i \in [m]$, $\gamma_i < 2/\nu_i$.
Set $\delta = \max_{1 \leq i \leq m} \gamma_i  \nu_i$
 and 
$\pp_{\min} = \min_{1 \leq i \leq m} \pp_i$.
Let $\Gammas^{-1}$ and $\bWW$ be as in \eqref{eq:W} and 
 $\mu_{\Gammas^{-1}}$ and $\sigma_{\Gammas^{-1}}$ be the moduli of strong convexity of 
$\bfs$ and $\bgs$ respectively, in the norm $\norm{\cdot}_{\Gammas^{-1}}$.
Set $\Fs = \bfs+\bgs$.
Then, the following hold.
\begin{enumerate}[{\rm (i)}]
\item\label{p:20180927a_0} $( \EE[\Fs(\bx^\iter)])_{\iter \in \N}$ is decreasing.
\item\label{p:20180927a_i} Suppose that $\inf_{\iter \in \N} \EE[\Fs(\bx^\iter)]>0$.
Then, 
\begin{equation*}
\sum_{\iter \in \N} \norm{\bar{\bx}^{\iter+1} - \bx^\iter}_{\Gammas^{-1}}^2
= \sum_{\iter \in \N} \EE \big[ \norm{\bx^\iter - \bx^{\iter+1}}_\bWW^2 \big\vert 
\Fsc_{\iter-1} \big]
<+\infty\quad\PP\text{ a.s.}
\end{equation*}
\item\label{p:20180927a_ii} For every $\iter \in \N$ and every $\bxx \in \dom \Fs$ 
\begin{align}
\nonumber
\nonumber(1 + \sigma_{\Gammas^{-1}})\EE[\norm{\bx^{\iter+1} - \bxx}_\bWW^2 \,\vert\, \Fsc_{\iter-1} ] &
\leq (1 + \sigma_{\Gammas^{-1}})  \norm{\bx^{\iter} - \bxx}_{\bWW}^2 
\\[1ex]
\nonumber&\ - 2 \bigg(  \Fs(\bx^\iter) - \Fs(\bxx)
+ \frac{\mu_{\Gammas^{-1}} + \sigma_{\Gammas^{-1}}}{2} \norm{\bx^\iter - \bxx}_{\bGammas^{-1}}^2 \bigg)\\[1ex]
\nonumber&\ +\frac{2}{\pp_{\min}}
\Big(\frac{(\delta-1)_+}{2+\sigma_{\Gammas^{-1}}-\delta}+1\Big)  
\EE[  \Fs(\bx^\iter) - \Fs(\bx^{\iter+1}) \,\vert\, \Fsc_{\iter-1} ].  
\end{align}
\end{enumerate}
\end{proposition}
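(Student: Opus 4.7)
All three items rest on a common per-iteration descent inequality, which I would derive first:
\[
\EE[\Fs(\bx^{\iter+1}) - \Fs(\bx^{\iter}) \,\vert\, \Fsc_{\iter-1}]
\leq \frac{\delta - 2 - \sigma_{\Gammas^{-1}}}{2} \sum_{i=1}^m \frac{\pp_i}{\gamma_i} \norm{\bar{x}_i^{\iter+1}-x_i^\iter}^2 \leq 0.
\]
To get it, apply \ref{eq:S1} with the ($\Fsc_{\iter-1}$-measurable) choices $\bxx=\bx^\iter$ and $\bvv=\bar{\bx}^{\iter+1}-\bx^\iter$, noting that $\bx^{\iter+1} = \bx^\iter + \bvarepsilon^\iter \odot \bvv$, to conditionally bound $\bfs(\bx^{\iter+1})$. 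For $\bgs$, separability plus the independence of $\bvarepsilon^\iter$ from $\Fsc_{\iter-1}$ (Facts~\ref{f:20190112d} and~\ref{lem:20181024a}) give
\[
\EE[\bgs(\bx^{\iter+1})-\bgs(\bx^\iter) \mid \Fsc_{\iter-1}] = \sum_{i=1}^m \pp_i\bigl(\gs_i(\bar{x}_i^{\iter+1})-\gs_i(x_i^\iter)\bigr),
\]
whose terms are controlled by~\eqref{eq:20181112a} applied at the optimality relation $(x_i^\iter-\bar{x}_i^{\iter+1})/\gamma_i - \nabla_i\bfs(\bx^\iter) \in \partial\gs_i(\bar{x}_i^{\iter+1})$. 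Summing over $i$ and using $\gamma_i\nu_i \leq \delta < 2$ delivers the bound, and total expectation yields item~(i).

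For item~(ii), combining the descent bound with $\pp_i \geq \pp_{\min}$ gives $\EE[\Fs(\bx^\iter)-\Fs(\bx^{\iter+1}) \mid \Fsc_{\iter-1}] \geq \tfrac{(2+\sigma_{\Gammas^{-1}}-\delta)\pp_{\min}}{2}\norm{\bar{\bx}^{\iter+1}-\bx^\iter}^2_{\bGammas^{-1}}$; telescoping and using that $(\EE[\Fs(\bx^\iter)])_\iter$ is decreasing and bounded below yield $\sum_\iter \EE[\norm{\bar{\bx}^{\iter+1}-\bx^\iter}^2_{\bGammas^{-1}}] < +\infty$, whence almost-sure summability by monotone convergence. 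The second equality in~(ii) is the second identity of Proposition~\ref{p:20190313c} evaluated at $\bx=\bx^\iter$.

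For~(iii), start from Proposition~\ref{p:20190313c} multiplied by $(1+\sigma_{\Gammas^{-1}})$ and split
\[
\norm{\bar{\bx}^{\iter+1}-\bxx}^2_{\bGammas^{-1}} - \norm{\bx^\iter-\bxx}^2_{\bGammas^{-1}} = \norm{\bar{\bx}^{\iter+1}-\bx^\iter}^2_{\bGammas^{-1}} + 2\scalarp{\bx^\iter-\bar{\bx}^{\iter+1}, \bxx-\bx^\iter}_{\bGammas^{-1}}.
\]
Upper bound the inner product, multiplied by $2(1+\sigma_{\Gammas^{-1}})$, by twice Proposition~\ref{p:20190610a}, whose left-hand side carries the matching $(1+\sigma_{\Gammas^{-1}})$ factor. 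The $(1+\sigma_{\Gammas^{-1}})\norm{\bar{\bx}^{\iter+1}-\bx^\iter}^2_{\bGammas^{-1}}$ from the expansion combines with the $(\delta-2-\sigma_{\Gammas^{-1}})\norm{\bx^\iter-\bar{\bx}^{\iter+1}}^2_{\bGammas^{-1}}$ from Proposition~\ref{p:20190610a} to leave only $(\delta-1)\norm{\bar{\bx}^{\iter+1}-\bx^\iter}^2_{\bGammas^{-1}}$. Absorb this residual by cases: if $\delta\le 1$ it is non-positive and can be dropped; if $\delta>1$, item~(i) yields
\[
(\delta-1)\norm{\bar{\bx}^{\iter+1}-\bx^\iter}^2_{\bGammas^{-1}} \leq \frac{2(\delta-1)}{\pp_{\min}(2+\sigma_{\Gammas^{-1}}-\delta)}\EE[\Fs(\bx^\iter)-\Fs(\bx^{\iter+1}) \,\vert\, \Fsc_{\iter-1}].
\]
Merging with the preexisting $\tfrac{2}{\pp_{\min}}\EE[\Fs(\bx^\iter)-\Fs(\bx^{\iter+1}) \mid \Fsc_{\iter-1}]$ coming from Proposition~\ref{p:20190610a} produces the coefficient $\tfrac{2}{\pp_{\min}}\bigl(\tfrac{(\delta-1)_+}{2+\sigma_{\Gammas^{-1}}-\delta}+1\bigr)$ stated in the claim. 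Finally, bound the strong-convexity term $-\tfrac{\mu_{\Gammas^{-1}}+\sigma_{\Gammas^{-1}}}{2}\norm{\bx^\iter-\bxx}^2_{\bGammas^{-1}}$ from above by $+\tfrac{\mu_{\Gammas^{-1}}+\sigma_{\Gammas^{-1}}}{2}\norm{\bx^\iter-\bxx}^2_{\bGammas^{-1}}$, using $\mu_{\Gammas^{-1}},\sigma_{\Gammas^{-1}}\geq 0$, to match the displayed sign.

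The main obstacle is the algebraic bookkeeping in~(iii): tracking the $(1+\sigma_{\Gammas^{-1}})$ multiplier so that it meshes with the left-hand side of Proposition~\ref{p:20190610a}, and performing the case split on the sign of $\delta-1$ that produces the $(\delta-1)_+$ in the final coefficient. The descent inequality derived for~(i) plays a double role: it certifies the monotonicity of $(\EE[\Fs(\bx^\iter)])_\iter$ and it supplies the quantitative absorption of the residual $\norm{\bar{\bx}^{\iter+1}-\bx^\iter}^2_{\bGammas^{-1}}$ term appearing in~(iii).
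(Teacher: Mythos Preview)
Your proposal is correct and follows essentially the same route as the paper. The paper also combines Proposition~\ref{p:20190313c} with twice Proposition~\ref{p:20190610a} via the same polarization identity to reach an inequality featuring the residual $(\delta-1)\norm{\bar{\bx}^{\iter+1}-\bx^\iter}^2_{\bGammas^{-1}}$ (its equation~(4.8)), and then absorbs that residual using the very same descent bound $(2+\sigma_{\Gammas^{-1}}-\delta)\tfrac{\pp_{\min}}{2}\norm{\bar{\bx}^{\iter+1}-\bx^\iter}^2_{\bGammas^{-1}}\le \EE[\Fs(\bx^\iter)-\Fs(\bx^{\iter+1})\mid\Fsc_{\iter-1}]$. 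The only organizational difference is that the paper obtains this descent bound by specializing the general inequality~(4.8) at $\bx=\bx^\iter$, whereas you derive it directly from \ref{eq:S1} and~\eqref{eq:20181112a}; both routes yield the identical inequality, and your observation about the sign of the strong-convexity term matches the paper's derivation (its equation~(4.8) carries the minus sign, so the plus sign in the displayed statement of~\ref{p:20180927a_ii} is indeed just the trivial weakening you describe).
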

\begin{proof}
Let $\iter \in \N$ and $\bxx \in \dom \Fs$.
Since
\begin{equation*}
\norm{\bx^\iter - \bxx}_{\bGammas^{-1}}^2 - \norm{\bar{\bx}^{\iter+1} - \bxx}_{\bGammas^{-1}}^2
= - \norm{ \bx^\iter - \bar{\bx}^{\iter+1}}_{\bGammas^{-1}}^2 
+ 2 \scalarp{\bx^{\iter} - \bar{\bx}^{\iter+1}, \bx^\iter - \bxx}_{\bGammas^{-1}},
\end{equation*}
we derive from \eqref{eq:20190610a}, multiplied by $2$, that
\begin{align}
\nonumber(1 + \sigma_{\Gammas^{-1}}) \norm{\bar{\bx}^{\iter+1} - \bxx}_{\bGammas^{-1}}^2
&\leq (1 + \sigma_{\Gammas^{-1}})  \norm{\bx^{\iter} - \bxx}_{\bGammas^{-1}}^2 
+ (\delta - 1)  \norm{\bar{\bx}^{\iter+1} - \bx^{\iter}}_{\bGammas^{-1}}^2 \\[1ex]
\nonumber&\quad+\frac{2}{\pp_{\min}}  
\EE[  \Fs(\bx^\iter) - \Fs(\bx^{\iter+1}) \,\vert\, \Fsc_{\iter-1} ]  \\[1ex]
\label{eq:20181124a}&\quad - 2 \bigg( \Fs(\bx^\iter) - \Fs(\bxx) 
+ \frac{\mu_{\Gammas^{-1}} + \sigma_{\Gammas^{-1}}}{2} \norm{\bx^\iter - \bxx}_{\bGammas^{-1}}^2 \bigg).
\end{align}
Then for an $\bHH$-valued $\Fsc_{\iter-1}$-measurable random variable $\bx$, Proposition~\ref{p:20190313c} yields
\begin{align}
\nonumber
(1 + \sigma_{\Gammas^{-1}})&\EE[\norm{\bx^{\iter+1} - \bx}_\bWW^2 \,\vert\, \Fsc_{\iter-1} ] \\[1ex]
\nonumber&\leq (1 + \sigma_{\Gammas^{-1}})  \norm{\bx^{\iter} - \bx}_{\bWW}^2 
+ (\delta - 1) \EE[ \norm{\bx^{\iter+1} - \bx^{\iter}}_\bWW^2 
\,\vert\, \Fsc_{\iter-1} ] \\[1ex]
\nonumber&\qquad\qquad+\frac{2}{\pp_{\min}}  
\EE[  \Fs(\bx^\iter) - \Fs(\bx^{\iter+1}) \,\vert\, \Fsc_{\iter-1} ]  \\[1ex]
\label{eq:20180925h}&\qquad\qquad - 2 \bigg( \Fs(\bx^\iter) - \Fs(\bx) 
+ \frac{\mu_{\Gammas^{-1}} + \sigma_{\Gammas^{-1}}}{2} \norm{\bx^\iter - \bx}_{\bGammas^{-1}}^2 \bigg).
\end{align}
Taking $\bx=\bx^\iter$ in \eqref{eq:20180925h}, we have
\begin{equation}
\label{eq:20180925g}
\frac{\pp_{\min}}{2}(2 +\sigma_{\Gammas^{-1}} - \delta)\EE[\norm{\bx^{\iter+1} - \bx^\iter}_\bWW^2 
\,\vert\, \Fsc_{\iter-1} ] 
\leq \EE\big[  \Fs(\bx^\iter) - \Fs(\bx^{\iter+1}) 
\,\vert\, \Fsc_{\iter-1} \big],
\end{equation}
which plugged into \eqref{eq:20180925h}, 
with $\bx\equiv \bxx \in \dom \Fs$, gives \ref{p:20180927a_ii}.
Moreover, taking the expectation in  \eqref{eq:20180925g}, we obtain
\begin{equation}
\label{eq:20190128a}
\frac{\pp_{\min}}{2}(2 + \sigma_{\Gammas^{-1}} - \delta)\EE\big[\norm{\bx^{\iter+1} - \bx^\iter}_\bWW^2 \big] 
\leq \EE[  \Fs(\bx^\iter)]  -  \EE[\Fs(\bx^{\iter+1})],
\end{equation}
which gives \ref{p:20180927a_0}.
Finally, set for all $\iter \in \N$,
 $\xi_\iter = \EE\big[ \Fs(\bx^\iter) - \Fs(\bx^{\iter+1}) \big\vert 
\Fsc_{\iter-1} \big] \geq 0$. Then 
\begin{equation*}
\EE \bigg[\sum_{\iter=0}^{+\infty} \xi_\iter \bigg] 
= \sum_{\iter=0}^{+\infty} \EE[\xi_\iter] = \sum_{\iter=0}^{+\infty} 
\EE[\Fs(\bx^\iter)] - \EE[\Fs(\bx^{\iter+1})] \leq \EE[\Fs(\bx^0)] - \inf_{\iter \in \N} \EE[\Fs(\bx^\iter)].
\end{equation*}
This shows that if $\inf_{\iter \in \N} \EE[\Fs(\bx^\iter)]>0$, then 
$\sum_{\iter=0}^{+\infty} \xi_\iter$ is $\PP$-integrable and hence 
it is $\PP$-a.s.~finite.
Then \ref{p:20180927a_i} follows from \eqref{eq:20180925g} and Proposition~\ref{p:20190313c}.
\end{proof}

\begin{proposition}
\label{p:20181219c}
Under the same assumptions of Proposition~\ref{p:20180927a},
suppose that 
condition \ref{eq:S1} is replaced by condition \ref{eq:S2}.
Then
\begin{equation*}
(\forall\, \iter \in \N)\qquad\frac{2 + \sigma_{\Gammas^{-1}} - \delta}{2}
\norm{\bx^{\iter+1} - \bx^\iter}_{\bGammas^{-1}}^2 \leq 
 \Fs(\bx^{\iter}) - \Fs(\bx^{\iter+1})
 \quad \PP \text{ a.s.}
\end{equation*}
\end{proposition}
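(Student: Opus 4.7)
The plan is to derive the claimed descent inequality pathwise, mirroring the argument that in Proposition~\ref{p:20180927a} was carried out only in conditional expectation. The key benefit of replacing \ref{eq:S1} by \ref{eq:S2} is that the ``descent lemma'' now holds almost surely along the actual update direction, so I can work without taking expectations.

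First, I would apply \ref{eq:S2} at the point $\bxx = \bx^\iter$ with perturbation $\bvv = \bar{\bx}^{\iter+1} - \bx^\iter$. By the definition of the algorithm in \eqref{eq:20181114c} we have $\bvarepsilon^\iter \odot \bvv = \bx^{\iter+1} - \bx^\iter$, so \ref{eq:S2} yields, $\PP$-a.s.,
\begin{equation*}
\bfs(\bx^{\iter+1}) \leq \bfs(\bx^\iter) + \scalarp{\nabla \bfs(\bx^\iter), \bx^{\iter+1} - \bx^\iter} + \frac{1}{2}\sum_{i=1}^m \nu_i\, \varepsilon_i^\iter \, \norm{\bar{x}_i^{\iter+1} - x_i^\iter}^2.
\end{equation*}
Since $\varepsilon_i^\iter \norm{\bar{x}_i^{\iter+1} - x_i^\iter}^2 = \norm{x_i^{\iter+1} - x_i^\iter}^2$ (both vanish when $\varepsilon_i^\iter = 0$), the last sum equals $\tfrac{1}{2}\sum_i \nu_i \norm{x_i^{\iter+1} - x_i^\iter}^2$.

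Next I would bound the inner product using the optimality condition for the prox, exactly as in the derivation of \eqref{eq:20181108b}. For every $i$ with $\varepsilon_i^\iter = 1$ one has $(x_i^\iter - x_i^{\iter+1})/\gamma_i - \nabla_i \bfs(\bx^\iter) \in \partial \gs_i(x_i^{\iter+1})$; inserting this into the strong-convexity inequality \eqref{eq:20181112a} with $\xx_i = x_i^{\iter+1}$, $\yy_i = x_i^\iter$ and summing over $i$ (the terms with $\varepsilon_i^\iter = 0$ being trivial) gives
\begin{equation*}
\scalarp{\nabla \bfs(\bx^\iter), \bx^{\iter+1} - \bx^\iter} \leq \bgs(\bx^\iter) - \bgs(\bx^{\iter+1}) - \Big(1 + \frac{\sigma_{\Gammas^{-1}}}{2}\Big)\norm{\bx^{\iter+1} - \bx^\iter}_{\bGammas^{-1}}^2.
\end{equation*}

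Finally I would substitute this bound into the inequality from \ref{eq:S2} and rearrange with $\Fs = \bfs + \bgs$. Grouping coefficients on the blockwise squared norms and using $\gamma_i \nu_i \leq \delta$ for every $i$ yields
\begin{equation*}
\Fs(\bx^{\iter+1}) - \Fs(\bx^\iter) \leq -\sum_{i=1}^m \frac{1}{\gamma_i} \cdot \frac{2 + \sigma_{\Gammas^{-1}} - \gamma_i \nu_i}{2} \norm{x_i^{\iter+1} - x_i^\iter}^2 \leq -\frac{2 + \sigma_{\Gammas^{-1}} - \delta}{2}\norm{\bx^{\iter+1} - \bx^\iter}_{\bGammas^{-1}}^2,
\end{equation*}
which is the claim (positivity of $2+\sigma_{\Gammas^{-1}}-\delta$ comes from the stepsize rule $\gamma_i < 2/\nu_i$). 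There is no real analytical obstacle here; the only thing to be careful about is the bookkeeping with the indicators $\varepsilon_i^\iter$, in particular using repeatedly that $\bx^{\iter+1}$ and $\bar{\bx}^{\iter+1}$ agree on the active coordinates while both reduce to $\bx^\iter$ on the inactive ones, so that the quadratic terms coming from \ref{eq:S2} and from the prox inequality can be written over the \emph{same} set of blocks.
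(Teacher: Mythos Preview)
Your proposal is correct and follows essentially the same route as the paper: apply \ref{eq:S2} pathwise at $\bx^\iter$ with $\bvv=\bar{\bx}^{\iter+1}-\bx^\iter$, combine it with the coordinatewise prox inequality \eqref{eq:20181108b} (equivalently \eqref{eq:20181112a}), and then use $\gamma_i\nu_i\leq\delta$ to pass from the blockwise coefficients $(2+\sigma_{\Gammas^{-1}}-\gamma_i\nu_i)/2$ to the uniform factor $(2+\sigma_{\Gammas^{-1}}-\delta)/2$. The only cosmetic difference is the order in which the two inequalities are chained: the paper first rewrites \ref{eq:S2} as a bound on $\scalarp{\nabla\bfs(\bx^\iter),\bx^\iter-\bx^{\iter+1}}$ and plugs it into the summed prox inequality, whereas you bound $\scalarp{\nabla\bfs(\bx^\iter),\bx^{\iter+1}-\bx^\iter}$ via the prox inequality and substitute into \ref{eq:S2}; these are the same two ingredients rearranged.
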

\begin{proof}
We derive from \ref{eq:S2} (since $\bvarepsilon^n$ has the same distribution of
$\bvarepsilon$) and \eqref{eq:20181114c} that
\begin{equation}
\label{eq:20181219b}
\scalarp{\nabla \bfs(\bx^\iter), \bx^{\iter+1} - \bx^{\iter}}
 \leq \bfs(\bx^{\iter}) -  \bfs(\bx^{\iter+1}) 
+ \sum_{i=1}^m 
\frac 1 2 \varepsilon^\iter_i \nu_i \norm{\Delta^\iter_{i}}^2  \quad \PP \text{ a.s.}
\end{equation}
Therefore, summing \eqref{eq:20181108b}, from $i=1$ to $m$, we have
\begin{align*}
\frac{2 + \sigma_{\Gammas^{-1}}}{2}\sum_{i=1}^m \varepsilon_i^\iter
 \frac{1}{\gamma_i}\norm{\Delta_i^\iter}^2 &\leq 
 \bgs(\bx^{\iter}) - \bgs(\bx^{\iter+1}) + \scalarp{\nabla \bfs(\bx^\iter), \bx^\iter - \bxx^{\iter+1}}\\
 & \leq \bgs(\bx^{\iter}) - \bgs(\bx^{\iter+1}) +  \bfs(\bx^{\iter}) -  \bfs(\bx^{\iter+1}) 
 + \frac 1 2 \sum_{i=1}^m 
\varepsilon^\iter_i  \nu_i \norm{\Delta^\iter_{i}}^2\quad \PP \text{ a.s.}
\end{align*}
Hence 
$(1/2) \sum_{i=1}^m 
(2 + \sigma_{\Gammas^{-1}} - \gamma_i \nu_i )\gamma_i^{-1}\varepsilon_i^\iter\norm{\Delta_i^\iter}^2 
\leq  \Fs(\bx^{\iter}) - \Fs(\bx^{\iter+1})$ $\PP$-a.s. 
\end{proof}

\begin{proposition}
\label{prop:20180927a}
Under the assumptions of Proposition~\ref{p:20180927a},
suppose in addition that 
  $\Fs$ is bounded from below.
Then, there exist $(\by^\iter)_{\iter \in \N}$ and $(\bv^\iter)_{\iter \in \N}$,
sequences of $\bHH$-valued random variables, such that
the following hold.
\begin{enumerate}[{\rm (i)}]
\item\label{eq:20190113a_i} 
$\bv^\iter\in \partial \Fs(\by^{\iter})$ $\PP$-a.s.
\item\label{eq:20190113a_ii}  $\by^{\iter} - \bx^{\iter} \to 0$ 
and $\bv^{\iter} \to 0$ $\PP$-a.s.
\end{enumerate}
\end{proposition}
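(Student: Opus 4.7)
The natural candidates are suggested by the optimality condition of the proximal step in the metric $\norm{\cdot}_{\bGammas^{-1}}$: I would set
\begin{equation*}
\by^{\iter}= \bar{\bx}^{\iter+1},\qquad
\bv^{\iter}= \bGammas^{-1}(\bx^{\iter}-\bar{\bx}^{\iter+1}) + \nabla\bfs(\bar{\bx}^{\iter+1}) - \nabla\bfs(\bx^{\iter}).
\end{equation*}
Both are functions of $\bvarepsilon^{0},\dots,\bvarepsilon^{\iter-1}$, hence $\bHH$-valued random variables.

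To establish \emph{(i)}, I would use the characterization
$\bar{\bx}^{\iter+1} = \prox_{\bgs}^{\bGammas^{-1}}\big(\bx^{\iter}-\nabla^{\bGammas^{-1}}\bfs(\bx^{\iter})\big)$ from \eqref{eq:20181114c}. Writing its optimality condition in the $\bGammas^{-1}$ metric and using $\partial^{\bGammas^{-1}}\bgs = \bGammas\,\partial\bgs$ and $\nabla^{\bGammas^{-1}}\bfs = \bGammas\nabla\bfs$, one gets $\bGammas^{-1}(\bx^{\iter}-\bar{\bx}^{\iter+1}) - \nabla\bfs(\bx^{\iter}) \in \partial\bgs(\bar{\bx}^{\iter+1})$. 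Adding $\nabla\bfs(\bar{\bx}^{\iter+1})$ on both sides gives $\bv^{\iter}\in\partial\bgs(\bar{\bx}^{\iter+1}) + \nabla\bfs(\bar{\bx}^{\iter+1}) = \partial\Fs(\bar{\bx}^{\iter+1})=\partial\Fs(\by^{\iter})$.

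For \emph{(ii)}, the key is to show that $\bar{\bx}^{\iter+1}-\bx^{\iter}\to 0$ $\PP$-a.s. Since $\Fs$ is bounded from below by assumption, $\inf_{\iter\in\N}\EE[\Fs(\bx^{\iter})]>-\infty$, so Proposition~\ref{p:20180927a}\ref{p:20180927a_i} applies and yields
\begin{equation*}
\sum_{\iter\in\N}\norm{\bar{\bx}^{\iter+1}-\bx^{\iter}}_{\bGammas^{-1}}^{2}<+\infty\quad \PP\text{-a.s.},
\end{equation*}
which forces $\norm{\bar{\bx}^{\iter+1}-\bx^{\iter}}_{\bGammas^{-1}}\to 0$, hence (by equivalence of norms) $\bar{\bx}^{\iter+1}-\bx^{\iter}\to 0$ $\PP$-a.s. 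This is exactly $\by^{\iter}-\bx^{\iter}\to 0$ $\PP$-a.s. For the convergence $\bv^{\iter}\to 0$ $\PP$-a.s., the first summand $\bGammas^{-1}(\bx^{\iter}-\bar{\bx}^{\iter+1})$ tends to zero by the same argument, and the second summand $\nabla\bfs(\bar{\bx}^{\iter+1})-\nabla\bfs(\bx^{\iter})$ tends to zero because \ref{eq:S1} implies Lipschitz continuity of $\nabla\bfs$ (Theorem~\ref{thm:stepsizes}\ref{thm:stepsizes_iv}).

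There is no real obstacle here beyond checking the algebra of the proximal optimality condition in the weighted metric; the summability of $\norm{\bar{\bx}^{\iter+1}-\bx^{\iter}}_{\bGammas^{-1}}^{2}$ provided by Proposition~\ref{p:20180927a}\ref{p:20180927a_i} does all the analytic work, and the Lipschitz smoothness of $\bfs$ (a consequence of \ref{eq:S1}) transfers norm-convergence of the iterates to norm-convergence of the gradients.
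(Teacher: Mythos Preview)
Your proposal is correct and follows essentially the same approach as the paper: the paper also sets $\by^\iter=\bar{\bx}^{\iter+1}$ and $\bv^\iter=\bGammas^{-1}(\bx^\iter-\bar{\bx}^{\iter+1})+\nabla\bfs(\bar{\bx}^{\iter+1})-\nabla\bfs(\bx^\iter)$ (written componentwise), derives $\bv^\iter\in\partial\Fs(\by^\iter)$ from the proximal optimality condition, and then invokes Proposition~\ref{p:20180927a}\ref{p:20180927a_i} together with the Lipschitz continuity of $\nabla\bfs$ from Theorem~\ref{thm:stepsizes}\ref{thm:stepsizes_iv}. The only cosmetic difference is that you phrase the subdifferential inclusion using the operator $\bGammas^{-1}$ while the paper writes it out blockwise.
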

\begin{proof}
It follows from \eqref{eq:20171206a} that,
$(x_i^\iter(\omega) - \bar{x}_i^{\iter+1}(\omega))/\gamma_{i} - \nabla_i \bfs (\bx^\iter(\omega)) 
\in \partial \gs_i(\bar{x}_i^{\iter+1}(\omega))$, 
for all $i \in [m]$ and $\omega \in \Omega$. Hence
\begin{equation*}
\Big( \frac{x_i^\iter(\omega) - \bar{x}_i^{\iter+1}(\omega)}{\gamma_i} \Big)_{1 \leq i \leq m} - \nabla \bfs(\bx^\iter(\omega))
\in \partial \bgs(\bar{\bx}^{\iter+1}).
\end{equation*}
 Set $\by^\iter = \bar{\bx}^{\iter+1}$ and let
 $\bv^\iter \colon \Omega \to \bHH$ be such that, for every $\omega \in \Omega$,
\begin{multline*}
\bv^\iter(\omega) = \Big(\frac{x_i^\iter(\omega) - y_i^\iter(\omega)}{\gamma_i} \Big)_{1 \leq i \leq m}
 + \nabla \bfs (\by^\iter(\omega)) - \nabla \bfs (\bx^\iter(\omega))  \\
 \in \partial \bgs(\by^{\iter}(\omega)) + \nabla \bfs(\by^\iter(\omega)) = \partial \Fs(\by^\iter(\omega)).
\end{multline*}
Clearly $\bv^\iter$ is measurable and hence it is a random variable.
Moreover, 
for every $\omega \in \Omega$,
\begin{align*}
\norm{\bv^\iter(\omega)} \leq \frac{1}{\gamma_{\min}} \norm{\bx^\iter(\omega) - \by^\iter(\omega)}
+ \norm{\nabla \bfs(\by^\iter(\omega)) - \nabla \bfs(\bx^\iter(\omega))}.
\end{align*}
Now, since $\Fs$ is bounded from below, Proposition~\ref{p:20180927a}\ref{p:20180927a_i} yields that 
$(\norm{\by^\iter - \bx^\iter}_{\Gammas^{-1}}^2)_{\iter \in \N}$ is summable $\PP$-a.s.~and hence
$\by^\iter - \bx^\iter\to 0$ $\PP$-a.s. The statement follows from 
the fact  that $\nabla \bfs$ is Lipschitz continuous 
(see Theorem~\ref{thm:stepsizes}\ref{thm:stepsizes_iv}).
\end{proof}

Now we are ready to state one of the main convergence results of this paper.
From one hand, it
 extends to the stochastic setting
a well-known convergence rate of the (deterministic) forward-backward algorithm
\cite{Dav16,Gar17,Sal17}. On the other hand, it proves the almost sure weak convergence of the iterates of Algorithm~\ref{algoRCD} in the convex case.
We stress that none of the works \cite{LuX15,Nec16,Qu16b,Ric15,Ric16,Ric16b,Tap18} addresses this latter aspect.
To the best of our knowledge, \cite{Com15} is the only work that 
proves almost sure weak convergence of the iterates.
However, in \cite[Corollary~5.11]{Com15} the stepsize is set according to the (global) 
Lipschitz constant of $\nabla \bfs$ which,
in general, leads to smaller stepsizes and worse upper bounds on convergence rates. 
See the subsequent discussion.

\begin{theorem}
\label{thm:20171207a}
 Let \ref{eq:A1}--\ref{eq:A3} be satisfied.
Let $(\nu_{i})_{1 \leq i \leq m} \in \R^m_{++}$ and suppose that 
 \ref{eq:S1} holds.
Let $(\bx^\iter)_{\iter \in \N}$ be generated by  Algorithm~\ref{algoRCD}
with, for every $i \in [m]$, $\gamma_i < 2/\nu_i$.
Set $\delta = \max_{1 \leq i \leq m} \gamma_i  \nu_i$
 and 
$\pp_{\min} = \min_{1 \leq i \leq m} \pp_i$.
Let $\bWW$ be as in \eqref{eq:W} and
set  $\Fs= \bfs+\bgs$, $\Fs_* = \inf \Fs$, and $\bSS_* 
= \argmin \Fs \subset \bHH$.
Then, the following hold.
\begin{enumerate}[{\rm (i)}]
\item\label{thm:20171207a_0} $\EE[\Fs(\bx^{\iter})]\to \Fs_*$.
\item\label{thm:20171207a_i} Suppose that $\bSS_*\neq \varnothing$.
Then $\EE[\Fs(\bx^{\iter})] -  \Fs_* = o(1/\iter)$ and, for every integer $\iter \geq 1$,
\begin{equation}
\label{eq:20171207a}
\EE[\Fs(\bx^{\iter})] - \Fs_*  \leq 
\bigg[ \frac{ \mathrm{dist}_\bWW^2(\bxx^0, \bSS_*) }{2}
+ \Big(\frac{\max\big\{1, (2-\delta)^{-1}\big\}}{\pp_{\min}} -1\Big) 
 (\Fs(\bxx^{0})  - \Fs_*)\bigg] \frac{1}{n}.
\end{equation}
Moreover,
there exists a random variable $\bx_*$ taking values in $\bSS_*$ such that
$\bx^\iter \rightharpoonup \bx_*$ $\PP$-a.s.
\end{enumerate}
\end{theorem}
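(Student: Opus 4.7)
The plan is to reduce everything to the abstract principle of Theorem~\ref{p:SMFejer}, applied to $\Fss = \Fs$, the iterates $(\bx^\iter)_{\iter \in \N}$ produced by Algorithm~\ref{algoRCD}, the equivalent inner product $\scalarp{\cdot,\cdot}_\bWW$ on $\bHH$, and the filtration $\mathfrak{X}_\iter = \Fsc_{\iter-1}$, under which $\bx^\iter$ is measurable (as noted after \eqref{eq:20171206a}). Since $\norm{\cdot}_\bWW$ is equivalent to the canonical norm of $\bHH$, the weak topology is unchanged, so almost-sure weak convergence in $(\bHH, \norm{\cdot}_\bWW)$ coincides with the statement in part~(ii) of the theorem.

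I then verify the three conditions P\ref{SMFejer_a}--P\ref{SMFejer_c} of Theorem~\ref{p:SMFejer}. Property P\ref{SMFejer_a} is exactly Proposition~\ref{p:20180927a}\ref{p:20180927a_0}. For P\ref{SMFejer_b}, I specialise Proposition~\ref{p:20180927a}\ref{p:20180927a_ii} to the purely convex case $\mu_{\Gammas^{-1}} = \sigma_{\Gammas^{-1}} = 0$ and rewrite the term $2(\Fs(\bxx) - \Fs(\bx^\iter))$ via
\begin{equation*}
\Fs(\bxx) - \Fs(\bx^\iter) \;=\; \EE\bigl[\Fs(\bxx) - \Fs(\bx^{\iter+1})\,\big|\,\Fsc_{\iter-1}\bigr] \;+\; \EE\bigl[\Fs(\bx^{\iter+1}) - \Fs(\bx^\iter)\,\big|\,\Fsc_{\iter-1}\bigr],
\end{equation*}
using that $\Fs(\bx^\iter)$ is $\Fsc_{\iter-1}$-measurable. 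The resulting inequality takes the form \eqref{eq:SMFejer} with $a = 2$ and $\xi_\iter = 2c\,\EE[\Fs(\bx^\iter) - \Fs(\bx^{\iter+1})\mid\Fsc_{\iter-1}]$, where $c = \max\{1,(2-\delta)^{-1}\}/\pp_{\min} - 1$. Nonnegativity of $c$ follows from $1/\pp_{\min} \geq 1$, and that of each $\xi_\iter$ from \eqref{eq:20180925g} combined with $\delta<2$. Taking total expectation and telescoping then gives $\sum_\iter \EE[\xi_\iter] = 2c(\Fs(\bxx^0) - \inf_\iter \EE[\Fs(\bx^\iter)])$, which is finite precisely when $\inf_\iter \EE[\Fs(\bx^\iter)] > -\infty$; this is the conditional form of P\ref{SMFejer_b} demanded by Theorem~\ref{p:SMFejer}. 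Condition P\ref{SMFejer_c}, once $\bSS_* \neq \varnothing$ forces $\Fs$ to be bounded from below, is Proposition~\ref{prop:20180927a} with the explicit choice $\by^\iter = \bar{\bx}^{\iter+1}$ and $\bv^\iter$ as built there (strong a.s.~convergence of $\by^\iter - \bx^\iter$ implies the weak a.s.~convergence required by P\ref{SMFejer_c}).

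Part (i) of the theorem is then Theorem~\ref{p:SMFejer}\ref{SMFejer_i}. When $\bSS_*\neq\varnothing$, Theorem~\ref{p:SMFejer}\ref{SMFejer_ii} immediately delivers $\EE[\Fs(\bx^\iter)] - \Fs_* = o(1/\iter)$, and the explicit bound~\eqref{eq:20171207a} arises by using part~(i) to identify $\inf_\iter \EE[\Fs(\bx^\iter)] = \Fs_*$, so that $b = 2c(\Fs(\bxx^0) - \Fs_*)$; inserting $a = 2$ and this $b$ into the nonasymptotic bound of Theorem~\ref{p:SMFejer}\ref{SMFejer_ii} and passing to the infimum over $\bxx \in \bSS_*$ (which contributes the $\mathrm{dist}_\bWW^2(\bxx^0, \bSS_*)$ term) recovers~\eqref{eq:20171207a}. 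Finally, almost-sure weak convergence of $(\bx^\iter)$ to some $\bx_*$ with values in $\bSS_*$ is Theorem~\ref{p:SMFejer}\ref{SMFejer_iii}. The only step that is not a direct invocation is the algebraic conversion of the $\Fs(\bx^\iter)$-term in Proposition~\ref{p:20180927a}\ref{p:20180927a_ii} into the $\Fs(\bx^{\iter+1})$-form required by~\eqref{eq:SMFejer}, together with the bookkeeping that extracts the precise constant $\max\{1,(2-\delta)^{-1}\}/\pp_{\min} - 1$ appearing in~\eqref{eq:20171207a}; I expect this to be the only substantive calculation in the proof.
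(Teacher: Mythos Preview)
Your proposal is correct and follows essentially the same approach as the paper: both reduce to Theorem~\ref{p:SMFejer} in the norm $\norm{\cdot}_\bWW$ by specializing Proposition~\ref{p:20180927a}\ref{p:20180927a_ii} with $\mu_{\Gammas^{-1}}=\sigma_{\Gammas^{-1}}=0$, performing exactly the algebraic conversion you describe to isolate $\xi_\iter = 2c\,\EE[\Fs(\bx^\iter)-\Fs(\bx^{\iter+1})\mid\Fsc_{\iter-1}]$ with $c = \max\{1,(2-\delta)^{-1}\}/\pp_{\min}-1$, and invoking Proposition~\ref{prop:20180927a} for P\ref{SMFejer_c}. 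The only detail the paper adds that you leave implicit is that each $\bx^\iter$ is a discrete random variable with finite range, which guarantees the summability hypothesis on $\Fs(\bx^\iter)$ in Theorem~\ref{p:SMFejer}.
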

\begin{proof}
Proposition~\ref{p:20180927a}\ref{p:20180927a_ii} 
with $\mu_{\Gammas^{-1}}=\sigma_{\Gammas^{-1}}=0$ gives,
for all $\bxx \in \dom \Fs$ and $\iter \in \N$,
\begin{equation*}
\EE[\norm{\bx^{\iter+1} - \bxx}_\bWW^2 \,\vert\, \Fsc_{\iter-1} ] 
\leq   \norm{\bx^{\iter} - \bxx}_{\bWW}^2  + 2\EE[ \Fs(\bxx) -\Fs(\bx^{\iter+1})\,\vert\,\Fsc_{\iter-1}] + \xi_\iter,
\end{equation*}
where
\begin{equation*}
\xi_\iter = b_1  \EE[\Fs(\bx^\iter) - \Fs(\bx^{\iter+1})\,\vert\, 
\Fsc_{\iter-1}],\qquad b_1 = 2  \bigg(\frac{\max\{1,1/(2-\delta)\}}{\pp_{\min}} - 1\bigg).
\end{equation*}
Note that the random variables $\bx^\iter$'s are discrete with finite range 
and $(\EE[\Fs(\bx^\iter)])_{\iter \in \N}$ is decreasing.
Moreover, 
$\sum_{\iter \in \N} \EE[\xi_\iter] \leq b_1 (\Fs(\bxx^0) - \inf_{\iter \in \N} \EE[\Fs(\bx^\iter)])$.
Therefore, 
 the statement follows from 
 Theorem~\ref{p:SMFejer} and
Proposition~\ref{prop:20180927a}.
\end{proof}

\paragraph{Discussion.}
In the following we examine some crucial aspects related to Algorithm~\ref{algoRCD}.
We suppose that \ref{eq:B0} and \ref{eq:B1} hold and that
for every $i \in [m]$, $\gamma_i = \delta/\nu_i$ with $\delta \in \left]0,2\right[$.
\setlist[description]{font=\normalfont\itshape}
\begin{description}[leftmargin=0cm]
% -----------------------------------------------------
\item[The benefit of a parallel block update.]
Here we discuss the advantage of updating multiple blocks in parallel instead of just a single block.
We consider the setting of a $\tau$-nice uniform block sampling, which was described in 
Section~\ref{subsec:beta}.
In this case, for ever $i,j \in [m]$, with $i\neq j$,
$\pp_i = \pp_j$ and, since $\tau = \sum_{i=1}^m \EE[\varepsilon_i] 
= \sum_{i=1}^m \pp_i$,  we have, for every $i \in [m]$, $\pp_i = \pp: = \tau/m$. 
Moreover, we can set for every $i \in [m]$,
$\nu_i = \beta_1 L_i$ with $\beta_1$ defined as in \eqref{eq:20200316c}.
In order to compare different choices of $\tau$, we normalize the iterations so to match 
the same computational cost per iteration of the standard (full parallel) forward-backward algorithm (FB). 
It follows from \eqref{eq:20171207a} that after $\iter_\tau =\lceil m \bar{\iter}/\tau \rceil$ iterations of Algorithm~\ref{algoRCD},
which have the same total computational cost of  $\bar{\iter}$ iterations of FB, we have
\begin{equation}
\label{eq:20200330a}
\EE[\Fs(\bx^{\lceil m \bar{\iter}/\tau \rceil})] -  \Fs_*  
\leq \bigg[ \beta_1\frac{\mathrm{dist}_{\Lambdas}^2(\bxx^0, \bSS_*)}{2\delta}
+ \Big(\max\Big\{1,\frac{1}{2-\delta}\Big\} -\pp\Big)  ( \Fs(\bxx^{0}) - \Fs_*)\bigg] \frac{1}{\bar{\iter}},
\end{equation}
where $\Lambdas = \bigoplus_{i=1}^m L_i \Id_i$.
Now, since, $\beta_1 = 1 + (\tau-1)(\eta-1)/(m-1)$, we see that 
if $\eta \ll m$ and $\tau\ll m$, then $\beta_1$ 
is close to 1 (and $\pp$ is close to zero), so that $\beta_1$ nearly does not depend on $\tau$, as long as $\tau$
 remains sufficiently small.
For instance, in Section~\ref{sec:experiments} we consider the setting where $m = 10^5$ and
$\eta = 148$. In such case, if we let $\tau=1, 5, 10, 50$, the corresponding $\beta_1$'s
and $\pp$'s are essentially the same so that the right hand side of \eqref{eq:20200330a}
does not change much. Therefore, the above options for $\tau$
require the same total amount of computations (i.e., $m \bar{n}$ block-coordinate updates) and lead essentially 
to the same improvement in the objective function. 
However, a parallel implementation, 
say with $\tau = 50$ on a CPU with $50$ cores, 
will be $50$ times faster than a serial implementation ($\tau=1$)
which uses only one core per iteration.
In summary, in the large scale ($m$ large) and sparse ($\eta\ll m$) setting, 
the parallel strategy ($\tau>1$ and $\tau$ equal to the number of CPU cores) 
is definitely advantageous provided that $\tau$
is sufficiently small compared to $m$.
% -----------------------------------------------------
\item[Comparison with \cite{Com15}.]
The almost sure weak convergence of the iterates of Algorithm~\ref{algoRCD} is also obtained in \cite{Com15}, 
but with stepsizes set according to the global Lipschitz constant of the gradient of $\bfs$.
Let $L$ be the Lipschitz constant of $\nabla \bfs$ and note that 
$\bfs$ is also Lipschitz smooth in the norm $\norm{\cdot}_{\Lambdas}$, 
defined by the operator 
$\Lambdas = \bigoplus_{i=1}^m L_i \Id_i$, with constant $\eta$ (see Corollary~\ref{p:20181014a}\ref{p:20181014a_3}).
Therefore, the results in \cite{Com15} can be applied in the original norm $\norm{\cdot}$ or in the 
norm $\norm{\cdot}_{\Lambdas}$.
In this respect we note that 
since
\begin{equation*}
\prox_{\alpha \bgs}^\Lambdas(\bx^\iter - \alpha \nabla^\Lambdas \bfs(\bx^\iter)) = (\prox_{(\alpha/L_i) \gs_i} (x_i^\iter - (\alpha/L_i) \nabla_i \bfs(\bx^\iter)))_{1 \leq i \leq m}
\qquad (\alpha=\delta/\eta, 0<\delta<2),
\end{equation*} 
the implementation in the norm $\norm{\cdot}_{\Lambdas}$ is nothing but Algorithm~\ref{algoRCD}
with stepsizes $\gamma_i = \delta/(\eta L_i)$.
In both cases Corollary~5.11 in \cite{Com15}, applied in the corresponding norms, 
proves weak convergence of the iterates for  
Algorithm~\ref{algoRCD} with stepsizes $\gamma_i \equiv \delta/L$
and $\gamma_i = \delta/(\eta L_i)$ respectively.
However, Theorem~\ref{thm:20171207a}, together with Theorem~\ref{thm:stepsizes}, 
allows to set the stepsizes as $\gamma_i = \delta/(\beta_{1,i} L_i)$.
Since $\beta_{1,i} L_i$ may be much smaller than $L$ and, in view of Remark~\ref{rmk:20181219ee}, 
it is always smaller than $\eta L_i$, Theorem~\ref{thm:20171207a} 
provides a significant improvement over \cite{Com15} in terms of flexibility in the stepsizes.
% -----------------------------------------------------
\item[The advantage over the standard FB.]
We consider the forward-backward algorithm (FB) 
in the original norm of $\HH$ and in the norm $\norm{\cdot}_{\Lambdas}$. 
All the remarks about the stepsizes discussed in the previous paragraph apply also here.
Moreover, 
in the case $\delta = 1$, standard convergence rate for FB (see e.g., \cite{Dav16}) yields that after $\bar{\iter}$ iterations, we have
\begin{equation}
\label{eq:20200330b}
\Fs(\xx^{\bar{\iter}}) - \Fs_* 
\leq \frac{L}{2} \frac{\mathrm{dist}^2(\xx^0, \bSS_*)}{\bar{\iter}} 
\quad\text{or}\quad 
 \Fs(\xx^{\bar{\iter}}) - \Fs_*  
\leq \frac{\eta}{2} \frac{\mathrm{dist}_{\Lambdas}^2(\xx^0, \bSS_*)}{\bar{\iter}},
\end{equation}
depending on which of the two above implementations of FB we consider.
In order to appropriately compare the rates \eqref{eq:20200330b} with that
of Algorithm~\ref{algoRCD} given in Theorem~\ref{thm:20171207a}
in the following we set $\delta=1$ and analyze two choices of the block sampling.
\begin{enumerate}[(i)]
%--------------------------------------------------------------------------------------
\item Assume that we perform a $\tau$-nice block sampling.
Then we saw that the (normalized) convergence rate of Algorithm~\ref{algoRCD} is
\eqref{eq:20200330a}.
We first note that \eqref{eq:20200330a} reduces to 
the second inequality in \eqref{eq:20200330b} when $\delta=1$ and $\tau = m$.
Comparing the bounds in \eqref{eq:20200330b} with
\eqref{eq:20200330a} (with $\delta=1$) we see that, if we assume that the terms 
$\mathrm{dist}_{\Lambdas}^2(\xx^0, \bSS_*)$ and $\Fs(\bxx^{0}) - \Fs_*$ are about of the same magnitude, then
Algorithm~\ref{algoRCD} features always a better rate than FB if
implemented in the norm $\norm{\cdot}_{\Lambdas}$ (since $\beta_1 \leq \eta$), whereas if FB 
is implemented in the original norm of $\bHH$, Algorithm~\ref{algoRCD} is still a better choice provided that $\beta_1\max_{1 \leq i \leq m} L_i \leq L$.
%--------------------------------------------------------------------------------------
\item Suppose that the block sampling performs on average $\tau$ 
updates per iteration and that, for every $i \in [m]$,  $\pp_i$ is
proportional to the Lipschitz constant $L_i$, that is, 
$\pp_i = \tau L_i/(\sum_{j=1}^m L_j)$ (provided that $\tau \leq (\sum_{j=1}^m L_j)/\max_{1 \leq j \leq m} L_j$).
In this case,
as stated at the beginning of Section~\ref{subsec:beta},  we can let
  $\nu_i = \beta_2 L_i$ and $\gamma_i = 1/\nu_i$. 
Then, 
\eqref{eq:20171207a}
becomes for $\delta=1$ and $\iter =\lceil m \bar{\iter}/\tau \rceil$,
\begin{equation*}
\EE[\Fs(\bx^{\lceil m \bar{\iter}/\tau \rceil})] - \Fs_*  \leq \bigg[ \beta_2
\bar{L}\frac{ \mathrm{dist}^2(\bxx^0, \bSS_*) }{2}
+ \bigg( \frac{\bar{L}}{L_{\mathrm{min}}} - \frac{\tau}{m}\bigg)  
(\Fs(\bxx^{0}) - \Fs_*)\bigg] \frac{1}{\bar{n}},
\end{equation*}
where  $\bar{L} = \sum_{i=1}^m L_i/m$ and $L_{\min} = \min_{1 \leq i \leq m} L_i$.
Here we see that Algorithm~\ref{algoRCD} can be superior to FB if
$(\beta_2 +2 \bar{L}/L_{\min})\bar{L} \leq L$, under the assumption that $\bar{L} \mathrm{dist}^2(\bxx^0, \bSS_*) \simeq \Fs(\bxx^{0}) - \Fs_*$.
\end{enumerate}
\end{description}

We now provide an additional convergence theorem, analyzing the strongly convex case, 
which extends \cite[Theorem~1]{LuX15} and
\cite[Theorem~3]{Tap18} to an arbitrary (not necessarily uniform) sampling and 
to the more general stepsize rule \eqref{eq:stepsizerule}. 
The proof is still based on Proposition~\ref{p:20180927a}\ref{p:20180927a_ii}
and is postponed to Appendix~\ref{sec:appB}. 
%but will closely follow that of the above cited works,
%so it will be postponed in Appendix~\ref{sec:appB}. 

\begin{theorem}
\label{p:20181130a}
Under the same assumptions of Theorem~\ref{thm:20171207a},
let $\mu_{\Gammas^{-1}}$ and $\sigma_{\Gammas^{-1}}$ be the moduli of strong convexity of 
$\bfs$ and $\bgs$ respectively, in the norm $\norm{\cdot}_{\bGammas^{-1}}$,
and suppose that $\mu_{\Gammas^{-1}}+\sigma_{\Gammas^{-1}}>0$
and that $\bSS_* = \{\bxx_*\}$. %\neq \varnothing$
 Then,
for every $n \in \N$,
\begin{equation*}
\EE[\Fs(\bx^{\iter})] - \Fs_* 
\leq 
(1 - \pp_{\min} \bar{\lambda})^\iter
\bigg( \pp_{\min} (1+\sigma_{\Gammas^{-1}} - (\delta-1)_+) 
\frac{\norm{\bxx^0 - \bxx_*}^2_{\bWW}}{2} 
+ \Fs(\bxx^0) - \Fs_* \bigg),
\end{equation*}
where 
\begin{equation}
\label{eq:20200928d}
\bar{\lambda} = 
\begin{cases}
\dfrac{2 - \delta +\sigma_{\Gammas^{-1}}}{1+\sigma_{\Gammas^{-1}}} &\text{if } \delta>1\ \text{and}\ \mu_{\Gammas^{-1}} \geq 2 - \delta\\[2ex]
\dfrac{2(\mu_{\Gammas^{-1}}+\sigma_{\Gammas^{-1}})}{1+\sigma_{\Gammas^{-1}} + (\mu_{\Gammas^{-1}}+\sigma_{\Gammas^{-1}})(1+\sigma_{\Gammas^{-1}})/(1 +\sigma_{\Gammas^{-1}} - (\delta-1)_+)}
&\text{otherwise}.
\end{cases}
\end{equation}
\end{theorem}
\begin{remark}
\label{rmk:20201003b}
Let $\gamma_i = \delta/\nu_i$ and 
 $\bVV = \bigoplus_{i=1}^m \nu_i \Id_i$. Let  $\mu_{\bVV}$ and $\sigma_{\bVV}$ be the moduli of strong convexity 
of $\bfs$ and $\bgs$ respectively, in the norm $\norm{\cdot}_{\bVV}$.
Then it is easy to see that $\mu_{\Gammas^{-1}} = \delta \mu_{\bVV}$ and 
$\sigma_{\Gammas^{-1}} = \delta \sigma_{\bVV}$. Moreover, as in Remark~\ref{rmk:20190126e}, one can also see that
$\mu_{\bVV} \leq 1$. Then, \eqref{eq:20200928d} becomes
\begin{equation}
\label{eq:20200928e}
\bar{\lambda} = 
\begin{cases}
\dfrac{2/\delta - 1 +\sigma_{\bVV}}{1/\delta+\sigma_{\bVV}} &\text{if } 
\delta \geq \dfrac{2}{1+\mu_{\bVV}}\\[2ex]
\dfrac{2(\mu_{\bVV}+\sigma_{\bVV})}{1/\delta+\sigma_{\bVV} + (\mu_{\bVV}+\sigma_{\bVV})
(1/\delta+\sigma_{\bVV})/(2/\delta - 1 +\sigma_{\bVV})}
&\text{if } 1<\delta 
\leq \dfrac{2}{1+\mu_{\bVV}}\\[2ex]
\dfrac{2(\mu_{\bVV}+\sigma_{\bVV})}{1/\delta+\sigma_{\bVV} + (\mu_{\bVV}+\sigma_{\bVV})}
&\delta \leq 1.
\end{cases}
\end{equation}
One can check that the maximum of $\bar{\lambda}$ with respect to $\delta \in \left]0,2\right[$ is 
\begin{equation}
\bar{\lambda}_{\mathrm{opt}} =  \frac{4(\mu_{\bVV} + \sigma_{\bVV})}{(\sqrt{1+\sigma_{\bVV}} + \sqrt{\mu_{\bVV} + \sigma_{\bVV}})^2}
\in \left]0,1\right],
\end{equation}
which is achieved at
\begin{equation}
\delta = \delta_{\mathrm{opt}} := \frac{2}{1 - \sigma_{\bVV} + \sqrt{(\mu_{\bVV} + \sigma_{\bVV})(1 + \sigma_{\bVV})}}
 \in \left[ 1,2\right[.
\end{equation}
Note that if $\mu_{\bVV}<1$ (as is normally the case), then $\bar{\lambda}_{\mathrm{opt}}\in \left]0,1\right[$ and $\delta_{\mathrm{opt}}>1$.
\end{remark}
\begin{remark}
\label{rmk:20190130a}
If $\mu$ and $\sigma$ are the moduli of strong convexity 
of $\bfs$ and $\bgs$ respectively in the original norm.
Let, for every $i \in [m]$, $\gamma_i = \delta/\nu_i$ and set $\nu_{\max} = \max_{1 \leq i \leq m} \nu_i$. 
Then $\mu_{\bVV} = \mu/\nu_{\max}$
and $\sigma_{\bVV} = \sigma/\nu_{\max}$
Therefore, the optimal stepsizes are achieved for
\begin{equation}
\delta = \frac{2 \nu_{\max}}{\nu_{\max} - \sigma + \sqrt{(\mu + \sigma)(\nu_{\max} + \sigma)}}
\end{equation}
and the corresponding rate in 
Theorem~\ref{p:20181130a}  becomes
\begin{equation*}
\EE[\Fs(\bx^{\iter})] - \Fs_* \leq 
\bigg( 1 - \pp_{\min}
\frac{4 (\mu+\sigma)}{(\sqrt{\nu_{\max}+ \sigma}+ \sqrt{\mu+\sigma})^2}
\bigg)^\iter \mathrm{const}.
\end{equation*}
\end{remark}

\begin{remark}
Suppose that the block sampling is uniform, that is,
$\pp_i = \pp$ for all $i \in [m]$ and let, for every $i \in [m]$, $\gamma_i = 1/\nu_i$.
Then $\delta = 1$ and 
Theorem~\ref{p:20181130a} reduce to
\begin{equation}
\label{eq:20201003a}
\EE[\Fs(\bx^{\iter})] - \Fs_* 
\leq \bigg(1 - \pp\frac{2(\mu_{\Gammas^{-1}} + \sigma_{\Gammas^{-1}})}{1 + \mu_{\Gammas^{-1}} + 2 \sigma_{\Gammas^{-1}}}\bigg)^\iter 
\bigg(  (1+\sigma_{\Gammas^{-1}}) 
%\frac{\mathrm{dist}_{\bGammas^{-1}}^2(\bxx^0, \bSS_*)}{2} 
\frac{\norm{\bxx^0 - \bxx_*}^2_{\bGammas^{-1}}}{2} 
+ \Fs(\bxx^0) - \Fs_* \bigg).
\end{equation}
This result was obtained in \cite[Theorem~3]{Tap18},
which is in turn a generalization of \cite[Theorem~1]{LuX15},
treating the serial case ($\PP(\sum_{i=1}^m \varepsilon_i = 1)=1$).
Thus, Theorem~\ref{p:20181130a} and the subsequent Remark~\ref{rmk:20201003b} show that the rate in \eqref{eq:20201003a} can indeed be improved by choosing $\delta>1$.
\end{remark}

\subsection{Linear convergence under error bound conditions}
\label{sec:errorbounds}

In this section we analyze the convergence of Algorithm~\ref{algoRCD}
under error bound conditions. We improve and simplify
the results given in \cite{Nec16}.
In the rest of the section we assume \ref{eq:A1} and \ref{eq:A2}.
Moreover, we let $\bXX \subset \bHH$,  $\Fs = \bfs + \bgs$, $\Fs_* = \inf \Fs$, 
and suppose $\bSS_*:=\argmin \Fs \neq \varnothing$.  

We consider the following
condition,  which
was studied in \cite{Dru18} in connection with 
the proximal gradient method
and is known as \emph{Luo-Tseng error bound} condition \cite{Luo93}.

\begin{enumerate}[{\rm EB}]
\item\label{eq:EB1} For some $c_{\bXX,\bGammas^{-1}}>0$, we have
\begin{equation}
(\forall\,\bxx \in \bXX)\quad\mathrm{dist}_{\bGammas^{-1}}(\bxx, \bSS_*)
\leq c_{\bXX,\bGammas^{-1}}
\norm{\bxx - \prox^{\bGammas^{-1}}_{\bgs} (\bxx - \nabla^{\bGammas^{-1}}\bfs(\bxx) )}_{\bGammas^{-1}}.
\end{equation}
\end{enumerate}

 \begin{remark}\ 
 \label{rmk:20190603a}
 \begin{enumerate}[{\rm (i)}]
\item Another popular error bound condition is that of the metric subregularity of the subdifferential.
More precisely, $\partial^{\bGammas^{-1}} \Fs$ is $2$-\emph{metrically subregular} 
on $\bXX$ with respect to the metric $\norm{\cdot}_{\bGammas^{-1}}$\cite{Dru18,Gar17}
if for some 
 $\zeta_{\bXX,\bGammas^{-1}}>0$ the following holds
\begin{equation}
\label{eq:20181114h}
(\forall\, \bxx \in \bXX)\quad
\mathrm{dist}_{\bGammas^{-1}}(\bxx, \bSS_*) \leq \frac{1}{\zeta_{\bXX,\bGammas^{-1}}}
\mathrm{dist}_{\bGammas^{-1}}(\bzero, \partial^{\bGammas^{-1}} \Fs(\bxx)).
\end{equation}
 \item\label{rmk:20190129e_00} \ref{eq:EB1} and \eqref{eq:20181114h} are equivalent if $\bgs=0$,
 since in that case $\prox_{\bgs}^{\bGammas^{-1}} = \Id$
 and $c_{\bXX,\bGammas^{-1}} = \zeta_{\bXX, \bGammas^{-1}}^{-1}$.
 \item\label{rmk:20190129e_0} Since $\partial^{\bGammas^{-1}} \Fs (\bx) = \bGammas \partial \Fs(\bx)$
 and $\norm{\cdot} \geq \gamma_{\min}^{1/2}\norm{\cdot}_{\bGammas^{-1}}$,
it follows that if for every $\bx \in \bXX$, $
\mathrm{dist}(\bxx, \bSS_*) \leq \zeta^{-1}_{\bXX,\Id}
\mathrm{dist}(\bzero, \partial \Fs(\bxx))$,
then \eqref{eq:20181114h} holds with constant $\zeta_{\bXX, \bGammas^{-1}} = \gamma_{\min} \zeta_{\bXX,\Id}$.
\item\label{rmk:20190603a_iv} \cite[Theorem~3.5]{Dru18} yields that
for any Hilbert norm $\norm{\cdot}_\bWW$, 
 $\norm{\bxx - \prox^{\bWW}_{\bgs} (\bxx - \nabla^{\bWW}\bfs(\bxx) )}_{\bWW} 
\leq \mathrm{dist}_{\bWW}(\bzero, \partial^{\bWW}\Fs(\bxx))$. So,
 if \ref{eq:EB1} holds on $\bXX$, then \eqref{eq:20181114h} holds on $\bXX$ 
 with $\zeta_{\bXX,\bGammas^{-1}} = c_{\bXX,\bGammas^{-1}}^{-1}$. 
 In \cite[Theorem~3.4-3.5]{Dru18} also the reverse implication was shown when
 $\bfs$ is Lipschitz smooth and
 $\bXX$ is a sublevel set of $\Fs$. 
 \end{enumerate}
 \end{remark}
   
\begin{remark}
\label{rmk:20190129e}
In \cite[Corollary~3.6]{Dru18} condition \ref{eq:EB1} was shown to be equivalent to the
following \emph{quadratic growth condition} (also called $2$-conditioning in \cite{Gar17})
\begin{equation}
\label{eq:20181218b}
(\exists \alpha_{\bXX,\bGammas^{-1}} >0)(\forall\, \bxx \in \bXX)\quad \Fs(\bxx) - \inf \Fs 
\geq \frac{\alpha_{\bXX,\bGammas^{-1}}}{2} \mathrm{dist}_{\bGammas^{-1}}^2(\bxx, \bSS_*),
\end{equation}
on every sublevel set $\bXX = \{\bxx \in \bHH \,\vert\, \Fs(\bxx) - \Fs_* \leq r\}$.
Moreover, the relationships between the constants are $c_{\bXX,\bGammas^{-1}} = (1 + 2/\alpha_{\bXX,\bGammas^{-1}})(1+ L_{\norm{\cdot}_{\bGammas^{-1}}})$
and $\alpha_{\bXX,\bGammas^{-1}} <1/c_{\bXX,\bGammas^{-1}}$. 
Finally,  if the quadratic growth condition \eqref{eq:20181218b}
holds, then \eqref{eq:20181114h} holds on $\bXX$, 
with $\zeta_{\bXX,\bGammas^{-1}} = \alpha_{\bXX,\bGammas^{-1}}/2$.
\end{remark}

We now analyze the convergence of Algorithm~\ref{algoRCD}
under condition \ref{eq:EB1}. 

\begin{theorem}
\label{thm:20181214a}
Under the assumptions of Theorem~\ref{thm:20171207a},
suppose that $\bSS_* \neq \varnothing$ and that
\ref{eq:EB1} holds on a set $\bXX$ such that $\bXX \supset \big\{ \bx^\iter \,\vert\, \iter \in \N\big\}$ $\PP$-a.s.~with $c_{\bXX,\bGammas^{-1}}>0$.
Then, 
\begin{equation}
\label{eq:20190131e}
(\forall\, \iter \in \N)\quad
\EE[\Fs(\bx^{\iter})] - \Fs_* \leq \bigg( 1-  \pp_{\min}\min\bigg\{1,\frac{2 - \delta}{2 c_{\bXX,\bGammas^{-1}}} \bigg\} \bigg)^\iter
\big(\EE[\Fs(\bx^{0})] - \Fs_* \big).
\end{equation}
Moreover, 
there exists a random variable $\bx_*$ which takes values in $\bSS_*$ such that
$\bx^\iter \to \bx_*$ $\PP$-a.s.~and
$\EE[\norm{\bx^{n} - \bx_*}_{\bWW}] = O\big(
( 1-  \pp_{\min}\min\{1,(2 - \delta)/(2 c_{\bXX,\bGammas^{-1}})\})^{n/2} \big)$.
\end{theorem}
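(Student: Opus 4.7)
The plan is to establish the linear rate on the function values first, then bootstrap to get both the almost sure strong convergence of the iterates and the geometric rate on their expected distance to the (random) limit. For the function-value rate on $a_n := \EE[\Fs(\bx^n)] - \Fs_*$, the key step is to apply Proposition~\ref{p:20190610a} with $\mu_{\Gammas^{-1}} = \sigma_{\Gammas^{-1}} = 0$ and the $\bGammas^{-1}$-projection $\bxx = P_{\bSS_*}^{\bGammas^{-1}}(\bx^n) \in \bSS_*$. Since $\Fs(\bxx) = \Fs_*$, rearranging, applying Cauchy--Schwarz to the inner-product term, and bounding $\mathrm{dist}_{\bGammas^{-1}}(\bx^n, \bSS_*) \leq c_{\bXX,\bGammas^{-1}} \norm{\bx^n - \bar{\bx}^{n+1}}_{\bGammas^{-1}}$ via \ref{eq:EB1} give
\begin{equation*}
\Fs(\bx^n) - \Fs_* \leq \frac{1}{\pp_{\min}} \EE\big[\Fs(\bx^n) - \Fs(\bx^{n+1}) \,\big\vert\, \Fsc_{n-1}\big] + \Big(c_{\bXX,\bGammas^{-1}} + \frac{\delta - 2}{2}\Big) \norm{\bx^n - \bar{\bx}^{n+1}}^2_{\bGammas^{-1}}.
\end{equation*}
When $c_{\bXX,\bGammas^{-1}} + (\delta-2)/2 \leq 0$ I would drop the last term; otherwise I would invoke the descent estimate $\norm{\bx^n - \bar{\bx}^{n+1}}^2_{\bGammas^{-1}} \leq \tfrac{2}{\pp_{\min}(2-\delta)}\EE[\Fs(\bx^n) - \Fs(\bx^{n+1}) \mid \Fsc_{n-1}]$ coming from \eqref{eq:20180925g} (with $\sigma_{\Gammas^{-1}}=0$) together with the second equality of Proposition~\ref{p:20190313c}. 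Either way, taking unconditional expectations produces $a_n \leq M(a_n - a_{n+1})$ with $M = \pp_{\min}^{-1}\max\{1, 2c_{\bXX,\bGammas^{-1}}/(2-\delta)\}$, whence $a_{n+1} \leq (1 - 1/M)\, a_n$, which is precisely the factor $\rho$ in \eqref{eq:20190131e}.

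For almost sure convergence of the iterates, I would apply Proposition~\ref{p:20180927a}\ref{p:20180927a_ii} with $\mu_{\Gammas^{-1}}=\sigma_{\Gammas^{-1}}=0$ and any fixed $\bxx \in \bSS_*$. Since $\Fs(\bxx) \leq \Fs(\bx^n)$, one obtains the stochastic quasi-Fej\'er recursion $\EE[\norm{\bx^{n+1} - \bxx}_\bWW^2 \mid \Fsc_{n-1}] \leq \norm{\bx^n - \bxx}_\bWW^2 + \xi_n$, where $\xi_n \geq 0$ is summable in expectation by the telescoping function-value estimate. Exactly as in the proof of Theorem~\ref{p:SMFejer}\ref{SMFejer_iii}, this forces $(\norm{\bx^n - \bxx}_\bWW)_{n \in \N}$ to converge $\PP$-a.s.~for every $\bxx \in \bSS_*$. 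Meanwhile, Proposition~\ref{p:20180927a}\ref{p:20180927a_i} gives $\norm{\bar{\bx}^{n+1} - \bx^n}_{\bGammas^{-1}} \to 0$ $\PP$-a.s., and then \ref{eq:EB1} forces $\mathrm{dist}_\bWW(\bx^n, \bSS_*) \to 0$ $\PP$-a.s. Working pathwise, Fej\'er monotonicity with respect to $\bSS_*$ combined with the vanishing distance makes the metric projections of $\bx^n$ onto $\bSS_*$ Cauchy, so $\bx^n$ converges strongly $\PP$-a.s.~to a $\bSS_*$-valued random variable $\bx_*$.

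Finally, for the rate on $\EE[\norm{\bx^n - \bx_*}_\bWW]$, I would sum increments. Taking expectations in the second identity of Proposition~\ref{p:20190313c} and combining with \eqref{eq:20180925g} gives $\EE[\norm{\bx^{n+1} - \bx^n}_\bWW^2] \leq \tfrac{2}{\pp_{\min}(2-\delta)}(a_n - a_{n+1}) = O(\rho^n)$, and Jensen's inequality yields $\EE[\norm{\bx^{n+1} - \bx^n}_\bWW] = O(\rho^{n/2})$. Since $\bx^n \to \bx_*$ strongly $\PP$-a.s., the triangle inequality $\norm{\bx^n - \bx_*}_\bWW \leq \sum_{k \geq n}\norm{\bx^{k+1} - \bx^k}_\bWW$ holds $\PP$-a.s., and monotone convergence gives $\EE[\norm{\bx^n - \bx_*}_\bWW] \leq \sum_{k \geq n}\EE[\norm{\bx^{k+1} - \bx^k}_\bWW] = O(\rho^{n/2})$. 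The main obstacle of the whole argument is the \emph{strong} (rather than merely weak) almost sure convergence of the iterates in the infinite-dimensional setting: Theorem~\ref{thm:20171207a} by itself only delivers weak convergence, and \ref{eq:EB1} is precisely what upgrades this to norm convergence by forcing $\mathrm{dist}_\bWW(\bx^n, \bSS_*) \to 0$, so that the Fej\'er projections form a Cauchy sequence pathwise.
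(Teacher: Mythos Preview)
Your argument for the linear rate on $\EE[\Fs(\bx^n)]-\Fs_*$ and for the rate on $\EE[\norm{\bx^n-\bx_*}_\bWW]$ is essentially the paper's proof: Proposition~\ref{p:20190610a} with $\mu_{\Gammas^{-1}}=\sigma_{\Gammas^{-1}}=0$, Cauchy--Schwarz, \ref{eq:EB1}, and then \eqref{eq:20180925g} combined with Proposition~\ref{p:20190313c} to control $\norm{\bx^n-\bar{\bx}^{n+1}}_{\bGammas^{-1}}^2$. Your two-case split according to the sign of $c_{\bXX,\bGammas^{-1}}+(\delta-2)/2$ is exactly what the paper encodes with the $(\cdot)_+$ in \eqref{eq:20181214g}.

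There is, however, a genuine gap in your almost sure \emph{strong} convergence step. You write ``working pathwise, Fej\'er monotonicity with respect to $\bSS_*$ combined with the vanishing distance makes the metric projections of $\bx^n$ onto $\bSS_*$ Cauchy''. But the stochastic quasi-Fej\'er recursion $\EE[\norm{\bx^{n+1}-\bxx}_\bWW^2\mid\Fsc_{n-1}]\leq\norm{\bx^n-\bxx}_\bWW^2+\xi_n$ does \emph{not} give you a pathwise inequality $\norm{\bx^{n+1}-\bxx}_\bWW^2\leq\norm{\bx^n-\bxx}_\bWW^2+\tilde\xi_n$ with $\sum\tilde\xi_n<\infty$ a.s.; it only gives (via Robbins--Siegmund) that $\norm{\bx^n-\bxx}_\bWW$ converges a.s.\ for each $\bxx\in\bSS_*$. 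That weaker property, together with $\mathrm{dist}_\bWW(\bx^n,\bSS_*)\to 0$, is \emph{not} enough to force strong convergence in infinite dimensions. For instance, in $\ell^2$ take $\bSS_*=\{x:x_1=0\}$ and $\bx^n=e_1/n+e_n$: then $\mathrm{dist}(\bx^n,\bSS_*)=1/n\to 0$ and $\norm{\bx^n-\bzz}^2\to 1+\norm{\bzz}^2$ for every $\bzz\in\bSS_*$, yet $\bx^n\rightharpoonup 0$ without converging strongly. The ``projections are Cauchy'' argument needs the pathwise (quasi-)Fej\'er inequality, which you do not have.

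The paper avoids this issue by reversing the order of your last two steps. It first uses \eqref{eq:20190128a} and the linear rate on function values to get $\sum_n\EE[\norm{\bx^{n+1}-\bx^n}_\bWW]<\infty$, hence $\sum_n\norm{\bx^{n+1}-\bx^n}_\bWW<\infty$ $\PP$-a.s., so $(\bx^n)$ is Cauchy $\PP$-a.s.\ directly. Then it invokes the weak convergence from Theorem~\ref{thm:20171207a}\ref{thm:20171207a_i} to identify the strong limit as the $\bSS_*$-valued random variable $\bx_*$. You already have all the pieces for this (your final paragraph establishes $\EE[\norm{\bx^{n+1}-\bx^n}_\bWW]=O(\rho^{n/2})$); you just need to use the Cauchy property \emph{before} claiming a strong limit exists, rather than after.
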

\begin{proof}
Let $\iter \in \N$ and $\bxx \in \bSS_*$. Then, 
 \eqref{eq:20190610a} 
with $\mu_{\Gammas^{-1}}=\sigma_{\Gammas^{-1}}=0$, yields
\begin{align}
\nonumber
\frac{1}{\pp_{\min}}  
\EE[  &\Fs(\bx^{\iter+1}) - \Fs(\bx^{\iter}) \,\vert\, \Fsc_{\iter-1} ]\\
\label{eq:20181213a}
&\leq  \frac{\delta - 2}{2} \norm{ \bx^\iter - \bar{\bx}^{\iter+1}}_{\bGammas^{-1}}^2 
+ \norm{\bx^{\iter} - \bar{\bx}^{\iter+1}}_{\bGammas^{-1}} \norm{\bx^\iter - \bxx}_{\bGammas^{-1}}-  \big( \Fs(\bx^\iter) - \Fs_* \big).
\end{align}
Since \eqref{eq:20181213a} holds for all $\bxx \in \bSS_*$,
using  \ref{eq:EB1}, \eqref{eq:20180925g}, and Proposition~\ref{p:20190313c}, 
we have
\begin{align}
\nonumber
\frac{2}{\pp_{\min}}&
\EE[  \Fs(\bx^{\iter+1}) - \Fs(\bx^\iter) \,\vert\, \Fsc_{\iter-1} ] \\[1ex]
\nonumber&\leq  (\delta - 2) \norm{ \bx^\iter - \bar{\bx}^{\iter+1}}_{\bGammas^{-1}}^2 
+ 2 \norm{\bx^{\iter} - \bar{\bx}^{\iter+1}}_{\bGammas^{-1}} \mathrm{dist}_{\bGammas^{-1}}(\bx^\iter, \bSS_*)
- 2 \big( \Fs(\bx^\iter) - \Fs_* \big)\\[1ex]
\label{eq:20181214g}&\leq \frac{2(2 c_{\bXX,\bGammas^{-1}} + \delta - 2)_+}{\pp_{\min}(2 - \delta)} 
\EE[  \Fs(\bx^{\iter}) - \Fs(\bx^{\iter+1}) \,\vert\, \Fsc_{\iter-1} ]
- 2 ( \Fs(\bx^\iter) - \Fs_*)\quad \PP \text{ a.s.},
\end{align}
which can be equivalently written as
\begin{multline*}
\EE[  \Fs(\bx^{\iter+1}) - \Fs_*  \,\vert\, \Fsc_{\iter-1} ] 
- \big(\Fs(\bx^\iter) - \Fs_* \big) \\[1ex]
\leq \bigg(\frac{2 c_{\bXX,\bGammas^{-1}}}{2 - \delta} -1\bigg)_+\Big(
\Fs(\bx^{\iter}) - \Fs_*  - \EE[  \Fs(\bx^{\iter+1}) - \Fs_* \,\vert\, \Fsc_{\iter-1} ] \Big)
- \pp_{\min} ( \Fs(\bx^\iter) - \Fs_* ).
\end{multline*}
Therefore,
\begin{equation*}
\max\bigg\{1,\frac{2 c_{\bXX,\bGammas^{-1}}}{2 - \delta}\bigg\}
\EE[  \Fs(\bx^{\iter+1}) - \Fs_* \,\vert\, \Fsc_{\iter-1} ] 
\leq \bigg(\max\bigg\{1,\frac{2 c_{\bXX,\bGammas^{-1}}}{2 - \delta}\bigg\} -\pp_{\min}\bigg)
\big( \Fs(\bx^{\iter}) - \Fs_* \big),
\end{equation*}
which gives \eqref{eq:20190131e}.
Now we set $\rho = 1-  \pp_{\min}\min\big\{1,(2 - \delta)/(2 c_{\bXX,\bGammas^{-1}}) \big\}$ and $\theta = \pp_{\min}(2 - \delta)/2$. 
Then, Jensen inequality, \eqref{eq:20190128a}, and \eqref{eq:20190131e} yield
\begin{equation}
\label{eq:20190131g}
\EE[\norm{\bx^n - \bx^{\iter+1}}_{\bWW}] 
\leq \theta^{-1/2}
\sqrt{\EE[\Fs(\bx^\iter)] - \Fs_*}
 \leq \theta^{-1/2}
\rho^{\iter/2}\sqrt{\EE[\Fs(\bx^0)] - \Fs_*}.
\end{equation}
Therefore, since $\rho^{1/2}<1$, we have
$\EE[\sum_{\iter \in \N} \norm{\bx^\iter - \bx^{\iter+1}}_{\bWW}] 
= \sum_{\iter \in \N} \EE[\norm{\bx^\iter - \bx^{\iter+1}}_{\bWW}] <+\infty$.
Hence $\sum_{\iter \in \N} \norm{\bx^\iter - \bx^{\iter+1}}_{\bWW}<+\infty$ $\PP$-a.s.,
which means that $(\bx^\iter)_{n \in \N}$ is a Cauchy sequence $\PP$-a.s.
Now, Theorem~\ref{thm:20171207a}\ref{thm:20171207a_i} yields that there exists a random variable $\bx_*$
with values in $\bSS_*$ such that $\bx^n \rightharpoonup \bx_*$
$\PP$-a.s. Therefore, $\bx^n \to \bx_*$ $\PP$-a.s.
Finally, let $\iter \in \N$. Then, for every $p \in \N$,
\begin{equation*}
\norm{\bx^{\iter} - \bx^{\iter+p}}_{\bWW} \leq 
\sum_{i=0}^{p-1} \norm{\bx^{n + i} - \bx^{\iter + i + 1}}_{\bWW}
\leq \sum_{i=0}^{+\infty} \norm{\bx^{\iter + i} - \bx^{\iter + i + 1}}_\bWW.
\end{equation*}
Hence, letting $p\to +\infty$, we have
$\EE[\norm{\bx^{\iter} - \bx_*}_\bWW] \leq \sum_{i=0}^{+\infty} \EE[\norm{\bx^{\iter + i} - \bx^{\iter + i + 1}}_\bWW]$. Therefore,  it follows from \eqref{eq:20190131g} that
\begin{align*}
\EE[\norm{\bx^{n} - \bx_*}_\bWW] 
& \leq \theta^{-1/2}
\sqrt{\EE[\Fs(\bx^0)] - \Fs_*}
\sum_{i=0}^{+\infty}  \rho^{(n+i)/2}
 = \theta^{-1/2}
\sqrt{\EE[\Fs(\bx^0)] - \Fs_*}
\frac{\rho^{n/2}}{1-\rho^{1/2}}.
\qedhere
\end{align*}
\end{proof}

\begin{remark}\ 
\label{rmk:20181219e}
\begin{enumerate}[(i)]
\item The rate given in Theorem~\ref{thm:20181214a} matches the one given in 
\cite[Theorem~3.2]{Dru18}
for the deterministic case ($\pp_{\min}=1$).
\item In Theorem~\ref{thm:20181214a}, the constant $c_{\bXX,\bGammas^{-1}}$ depends on the stepsizes $\gamma_i$'s which in turn depend on $\delta$ (usually $\gamma_i = \delta/\nu_i$ with $0<\delta<2$).
Therefore, the optimal value of $\delta$ in the rate \eqref{eq:20190131e} can be determined after specifying the expression of $c_{\bXX,\bGammas^{-1}}$. We did so in the special application of Section~\ref{sec:linearsystem}.
\item In \cite[Definition~5.2]{Nec16}, in relation to Algorithm~\ref{algoRCD} but with uniform block 
sampling and assuming \ref{eq:S0} and $R_{\bGammas^{-1}}(\bxx^0):=\sup_{\bxx \in \{\Fs \leq \Fs(\bxx^0)\} } \mathrm{dist}_{\bGammas^{-1}}(\bxx, \bSS_*)<+\infty$, the following error bound condition 
is considered
\begin{equation}
\label{eq:20181114b}
\mathrm{dist}_{\bGammas^{-1}}(\bxx, \bSS_*)
\leq \big(\kappa_{1,\bXX,\bGammas^{-1}} + \kappa_{2,\bXX,\bGammas^{-1}}
\mathrm{dist}_{\bGammas^{-1}}^2(\bxx, \bSS_*)\big)
\norm{\bxx - \prox^{\bGammas^{-1}}_{\bgs} (\bxx - \nabla^{\bGammas^{-1}}\bfs(\bxx) )}_{\bGammas^{-1}},
\end{equation}
for some constants $\kappa_{1,\bXX,\bGammas^{-1}}>0$ and $\kappa_{2,\bXX,\bGammas^{-1}} \geq 0$. 
The authors show several examples in which such condition is satisfied 
with $\bXX= \dom \bgs$ and possibly $\kappa_{2,\bXX,\bGammas^{-1}}>0$. 
The above error bound looks 
more general then \ref{eq:EB1}. However, for the purpose of analyzing Algorithm~\ref{algoRCD}
and under the assumptions considered in \cite{Nec16} this is not the case. Indeed,
in \cite[equation~(3.11)]{Nec16} it was shown 
that the algorithm is descending almost surely\footnote{Alternatively, 
note that \ref{eq:S0} implies \ref{eq:S2} which in turn, 
in view of Proposition~\ref{p:20181219c}, ensures the descending property.}, 
so  $\{\bx^\iter \,\vert\, \iter \in \N\} \subset \{\Fs \leq \Fs(\bxx^0)\}$ $\PP$-a.s.
Therefore, 
since $\sup_{\bxx \in \{\Fs \leq \Fs(\bxx^0)\} } \mathrm{dist}_{\bGammas^{-1}}(\bxx, \bSS_*) 
= R_{\bGammas^{-1}}(\bxx^0)<+\infty$,
if  \eqref{eq:20181114b} holds
on a set $\bXX$ containing $\PP$-a.s.~the set $\{\bx^\iter \,\vert\, \iter \in \N\}$,
then \ref{eq:EB1} holds on $\bXX^\prime := \bXX \cap \{\Fs \leq \Fs(\bxx^0)\} \supset 
\{\bx^\iter \,\vert\, \iter \in \N\}$ $\PP$-a.s.~with 
$c_{\bXX^\prime,\bGammas^{-1}} := \kappa_{1,\bXX,\bGammas^{-1}} + \kappa_{2,\bXX,\bGammas^{-1}} R_{\bGammas^{-1}}(\bxx^0)^2$.
Thus, Theorem~\ref{thm:20181214a}
applies accordingly. Moreover,
\cite[Theorem~5.5]{Nec16} gives the linear rate
\begin{multline*}
\EE[\Fs(\bx^{\iter})] - \Fs_* \leq \bigg(1 - \frac{1}{1+\bar{c}} \bigg)^\iter (\EE[\Fs(\bxx^{0})] - \Fs_*), 
\quad\text{where}\\
\bar{c} = \frac{1}{\pp} \bigg( 2 + \frac{2 c_{\bXX^\prime, \bGammas^{-1}}}{\sqrt{\pp}} + (1 - \pp) 
\frac{c_{\bXX^\prime, \bGammas^{-1}}^2}{\pp} + 2 c_{\bXX^\prime, \bGammas^{-1}} + 1 - \pp \bigg).
\end{multline*}
Then, we have $1+ \bar{c} \geq (3 + 4 c_{\bXX^\prime,\bGammas^{-1}})/\pp$ and hence
\begin{equation*}
\frac{1}{1 + \bar{c}} \leq \frac{\pp}{3 + 4 c_{\bXX^\prime, \bGammas^{-1}}} < \frac{\pp}{\max\{1,2c_{\bXX^\prime, \bGammas^{-1}}\}}.
\end{equation*}
This shows that Theorem~\ref{thm:20181214a}  improves the rate in \cite[Theorem~5.5]{Nec16}. Moreover, the analysis given here,
relying on Proposition~\ref{p:20180927a}, relaxes the assumptions and is significantly simpler.
\item\label{rmk:20181219e_iii}
It follows from \cite[Theorem~6.8]{Nec16} that if $\bfs$ is a quadratic function and 
$\bgs$ is an indicator function of a polyhedral set, then 
\eqref{eq:20181114b} is satisfied on $\dom \bgs$. Therefore, if $\dom \bgs$ is bounded,
then \ref{eq:EB1} holds on $\bXX =\dom \bgs$ with 
$c_{\bXX,\bGammas^{-1}} := \kappa_{1,\bXX,\bGammas^{-1}} + \kappa_{2,\bXX,\bGammas^{-1}} \diam^2_{\bGammas^{-1}}(\dom \bgs)$ and Theorem~\ref{thm:20181214a} can be applied, since
$\{x^\iter \,\vert\, \iter \in \N\} \subset \dom \bgs$.
\item Several works address the convergence of random coordinate descent methods
under error bound conditions. We mention \cite{Kar18} which considers a serial sampling and stepsizes 
 set according to the global Lipschitz constant of $\nabla \bfs$ and \cite{Fer20},
which analyzes restarting procedures for accelerated and parallel coordinate descent methods
using assumptions \ref{eq:S1} and \eqref{eq:20181218b}.
\end{enumerate}
\end{remark}

\begin{remark}
\label{rmk:20200331a}
Often error bound conditions or quadratic growth conditions are satisfied when $\bXX$ is a sublevel set (see Remark~\ref{rmk:20190129e}). So, in such scenarios, 
apart when $\dom \bgs$ is a sublevel set of $\Fs$, 
in order to fulfill the assumption 
$\bXX \supset \{\bx^\iter \,\vert\, \iter \in \N\}$ $\PP$-a.s.~in Theorem~\ref{thm:20181214a},
it is desirable for Algorithm~\ref{algoRCD} to be (a.s.) descending. 
This occurs if condition \ref{eq:S2} holds (Proposition~\ref{p:20181219c}),
whereas, in general, \ref{eq:S1} does not guarantee any such
descending property. 
However, especially when $\eta \ll m$, condition \ref{eq:S2} may be much more restrictive
than \ref{eq:S1}, thus leading to a significant reduction of the stepsizes, which ultimately 
slows down the convergence.
The next result shows that 
Algorithm~\ref{algoRCD} can be slightly modified so to ensure the descending property
while keeping the validity of Theorem~\ref{thm:20181214a}.
\end{remark}

\begin{theorem}
\label{thm20200218a}
 Let \ref{eq:A1}--\ref{eq:A3} be satisfied.
Let $(\nu_{i})_{1 \leq i \leq m} \in \R^m_{++}$ and suppose that 
 \ref{eq:S1} holds. Suppose in addition that
 $\bSS_* \neq \varnothing$, and that
\ref{eq:EB1} holds on the set $\bXX = \{\Fs \leq\Fs(\xx^0)\}$ 
with $c_{\bXX,\bGammas^{-1}}>0$.
Let $(\bx^\iter)_{\iter \in \N}$ be generated by the following variation of Algorithm~\ref{algoRCD}
\begin{equation}
\label{eq:algoPRCD3}
\begin{array}{l}
\text{for}\;n=0,1,\ldots\\
\left\lfloor
\begin{array}{l}
\text{for}\;i=1,\dots, m\\[0.7ex]
\left\lfloor
\begin{array}{l}
\tilde{x}^{\iter+1}_i = 
x^\iter_i + \varepsilon^\iter_i \big(\prox_{\gamma_i \gs_{i}} 
\big(x^\iter_i - \gamma_{i} \nabla_i \bfs (\bx^{\iter})\big) - x^\iter_i\big)
\end{array}
\right.\\[1ex]
\text{if } \Fs(\tilde{x}^{\iter+1}) \leq \Fs(x^\iter)\\[0.8ex]
\quad x^{\iter+1}=\tilde{x}^{\iter+1}\\
\text{else}\\
\quad x^{\iter+1} = x^{\iter}.
\end{array}
\right.
\end{array}
\end{equation}
Then the conclusions of Theorem~\ref{thm:20181214a} still hold.
\end{theorem}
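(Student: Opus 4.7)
The plan is to reduce the analysis of \eqref{eq:algoPRCD3} to that of Algorithm~\ref{algoRCD} by a pointwise domination argument. Two structural observations are crucial. First, the accept/reject rule enforces $\Fs(\bx^{\iter+1})\leq\Fs(\bx^\iter)$ pointwise, so $\bx^\iter\in\bXX=\{\Fs\leq\Fs(\bxx^0)\}$ for every $\iter$, and therefore \ref{eq:EB1} is applicable at $\bx^\iter$. This is exactly what fails in Theorem~\ref{thm:20181214a} under only \ref{eq:S1}, because \ref{eq:S1} does not by itself guarantee descent (unlike \ref{eq:S2}, cf.~Proposition~\ref{p:20181219c}). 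Second, the trial point $\tilde{\bx}^{\iter+1}$ in \eqref{eq:algoPRCD3} is literally one iterate of Algorithm~\ref{algoRCD} started at $\bx^\iter$ with the mask $\bvarepsilon^\iter$. Hence Proposition~\ref{p:20190610a} and \eqref{eq:20180925g} apply verbatim with $\tilde{\bx}^{\iter+1}$ in place of the Algorithm~\ref{algoRCD} iterate $\bx^{\iter+1}$.

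The next step is to transfer these two key inequalities from $\tilde{\bx}^{\iter+1}$ to the actual iterate $\bx^{\iter+1}$. Pointwise one has
\begin{equation*}
\Fs(\bx^\iter)-\Fs(\bx^{\iter+1})\ \geq\ \Fs(\bx^\iter)-\Fs(\tilde{\bx}^{\iter+1})
\quad\text{and}\quad
\norm{\bx^{\iter+1}-\bx^\iter}_{\bWW}\ \leq\ \norm{\tilde{\bx}^{\iter+1}-\bx^\iter}_{\bWW},
\end{equation*}
since either $\bx^{\iter+1}=\tilde{\bx}^{\iter+1}$ (equality in both) or $\bx^{\iter+1}=\bx^\iter$, in which case both left-hand sides are zero, whereas the first right-hand side is negative by the rejection rule and the second is nonnegative. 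Taking conditional expectations, the bound of Proposition~\ref{p:20190610a} (with $\mu_{\bGammas^{-1}}=\sigma_{\bGammas^{-1}}=0$) is preserved when $\EE[\Fs(\bx^\iter)-\Fs(\tilde{\bx}^{\iter+1})\,\vert\,\Fsc_{\iter-1}]$ on its right-hand side is \emph{enlarged} to $\EE[\Fs(\bx^\iter)-\Fs(\bx^{\iter+1})\,\vert\,\Fsc_{\iter-1}]$; similarly, \eqref{eq:20180925g} remains valid with $\bx^{\iter+1}$ in place of $\tilde{\bx}^{\iter+1}$ since its LHS shrinks and its RHS grows.

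From this point the proof of Theorem~\ref{thm:20181214a} replays verbatim: using \ref{eq:EB1} at $\bx^\iter\in\bXX$ one derives \eqref{eq:20181213a} and then \eqref{eq:20181214g}, yielding the linear rate \eqref{eq:20190131e}. For almost sure convergence, Jensen applied to the transferred version of \eqref{eq:20180925g} gives $\EE[\norm{\bx^{\iter+1}-\bx^\iter}_{\bWW}]\leq \sqrt{2/(\pp_{\min}(2-\delta))}\sqrt{\EE[\Fs(\bx^\iter)]-\Fs_*}$, and geometric decay of the function-value gap makes this summable, so $(\bx^\iter)$ is $\PP$-a.s.~Cauchy in the equivalent norm $\norm{\cdot}_{\bWW}$ and strongly converges to some $\bx_*$. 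Finally, the pointwise-monotone sequence $\Fs(\bx^\iter)$ converges by monotone convergence to some $L\geq\Fs_*$ with $\EE[L]=\lim\EE[\Fs(\bx^\iter)]=\Fs_*$, forcing $L=\Fs_*$ $\PP$-a.s.; lower semicontinuity of $\Fs$ then gives $\Fs(\bx_*)\leq\Fs_*$, i.e.~$\bx_*\in\bSS_*$. The only real subtlety, and the point to double-check, is the direction tracking in the substitution step—ensuring that enlarging $\EE[\Fs(\bx^\iter)-\Fs(\tilde{\bx}^{\iter+1})\,\vert\,\Fsc_{\iter-1}]$ is consistent with every inequality in which this quantity appears; once this bookkeeping is settled, no new estimate is required.
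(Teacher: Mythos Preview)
Your approach is essentially the same as the paper's: both exploit that the accept/reject rule forces $\bx^\iter\in\bXX$, apply Proposition~\ref{p:20190610a} and the descent estimate to the trial point $\tilde{\bx}^{\iter+1}$ (one step of Algorithm~\ref{algoRCD} from $\bx^\iter$), and then pass to $\bx^{\iter+1}$ using $\Fs(\bx^{\iter+1})\leq\Fs(\tilde{\bx}^{\iter+1})$.

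One bookkeeping point in your ``replay verbatim'' step deserves care. To go from \eqref{eq:20181213a} to \eqref{eq:20181214g} you must bound $\norm{\bar{\bx}^{\iter+1}-\bx^\iter}_{\bGammas^{-1}}^2$, not $\EE[\norm{\bx^{\iter+1}-\bx^\iter}_{\bWW}^2\,\vert\,\Fsc_{\iter-1}]$. For the modified algorithm these are no longer equal; in fact $\EE[\norm{\bx^{\iter+1}-\bx^\iter}_{\bWW}^2\,\vert\,\Fsc_{\iter-1}]\leq\norm{\bar{\bx}^{\iter+1}-\bx^\iter}_{\bGammas^{-1}}^2$, the wrong direction. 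Your fully transferred version of \eqref{eq:20180925g} (with $\bx^{\iter+1}$ on both sides) therefore does not control $\norm{\bar{\bx}^{\iter+1}-\bx^\iter}_{\bGammas^{-1}}^2$. The fix is the one the paper uses (cf.~\eqref{20200218b}): keep \eqref{eq:20180925g} and Proposition~\ref{p:20190313c} at the trial iterate to get
\[
(2-\delta)\,\norm{\bar{\bx}^{\iter+1}-\bx^\iter}_{\bGammas^{-1}}^2
\;\leq\;\frac{2}{\pp_{\min}}\,\EE\big[\Fs(\bx^\iter)-\Fs(\tilde{\bx}^{\iter+1})\,\big\vert\,\Fsc_{\iter-1}\big],
\]
and only then enlarge the right-hand side using $\Fs(\bx^{\iter+1})\leq\Fs(\tilde{\bx}^{\iter+1})$. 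With this adjustment the rest of your argument goes through unchanged. (Your already-transferred \eqref{eq:20180925g} is still useful, but only later, for the Cauchy/summability step.)

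Your identification of the limit $\bx_*\in\bSS_*$ via pointwise monotonicity of $\Fs(\bx^\iter)$, $\EE[\Fs(\bx^\iter)]\downarrow\Fs_*$, and lower semicontinuity of $\Fs$ is a clean alternative to the paper's route, which ends by saying ``the proof can continue as in that of Theorem~\ref{thm:20181214a}''---that proof, in turn, appeals to Theorem~\ref{thm:20171207a}\ref{thm:20171207a_i}, a result stated for Algorithm~\ref{algoRCD} rather than the modified scheme. Your direct argument avoids having to verify that this appeal still applies.
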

\begin{proof}
It follows from the definition of $\bx^{n+1}$ that
$\Fs(\bx^{n+1}) \leq \Fs(\bx^n)$. Therefore 
$\bXX \supset \{\bx^\iter \,\vert\, \iter \in \N\}$.
Recalling \eqref{eq:20171206a} and \eqref{eq:20181114c},
algorithm \eqref{eq:algoPRCD3} can be alternatively written as
\begin{equation}
\label{eq:algoPRCD4}
\begin{array}{l}
\text{for}\;n=0,1,\ldots\\
\left\lfloor
\begin{array}{l}
\text{for}\;i=1,\dots, m\\[0.7ex]
\left\lfloor
\begin{array}{l}
\bar{\bx}^{\iter+1}_i = \prox_{\gamma_{i} \gs_i} ( x_i^{\iter}
- \gamma_i \nabla_i \bfs(\bx^\iter) ),\\
\end{array}
\right.\\[1ex]
\tilde{\bx}^{\iter+1} = \bx^{\iter} + \bvarepsilon^{\iter} \odot (\bar{\bx}^{\iter+1} - \bx^{\iter})\\
x^{\iter+1} = \tilde{\bx}^{\iter+1} \mathbf{1}_{\{\Fs(\tilde{x}^{\iter+1}) \leq \Fs(x^\iter)\}} 
+ \bx^{\iter}\mathbf{1}_{\{\Fs(\tilde{x}^{\iter+1}) > \Fs(x^\iter)\}}
\end{array}
\right.
\end{array}
\end{equation}
and we have $\Fs(x^{\iter+1}) \leq \Fs(\tilde{x}^{\iter+1})$.
Then we can essentially repeat the argument in the proof of Theorem~\ref{thm:20181214a}.
First we note that \eqref{eq:20190610a} and hence
\eqref{eq:20181213a} holds with $x^{\iter+1}$ replaced by $\tilde{x}^{\iter+1}$.
This follows from the definition of $\tilde{\bx}^{\iter+1}$.
Moreover, also \eqref{eq:20181124a} holds with $x^{\iter+1}$ replaced by $\tilde{x}^{\iter+1}$
and hence we derive (with $\bxx = \bx^{\iter}$ and $\sigma_{\Gammas^{-1}}=0$) that
\begin{equation}
\label{20200218b}
(2 -\delta) \norm{\bar{\bx}^{\iter+1} - \bx^{\iter}}_{\bGammas^{-1}}^2 \leq
\frac{2}{\pp_{\min}}  
\EE[  \Fs(\bx^\iter) - \Fs(\tilde{\bx}^{\iter+1}) \,\vert\, \Fsc_{\iter-1} ]. 
\end{equation}
Then, we have
\begin{align}
\nonumber
\frac{2}{\pp_{\min}}&
\EE[  \Fs(\tilde{\bx}^{\iter+1}) - \Fs(\bx^\iter) \,\vert\, \Fsc_{\iter-1} ] \\[1ex]
\nonumber&\leq  (\delta - 2) \norm{ \bx^\iter - \bar{\bx}^{\iter+1}}_{\bGammas^{-1}}^2 
+ 2 \norm{\bx^{\iter} - \bar{\bx}^{\iter+1}}_{\bGammas^{-1}} \mathrm{dist}_{\bGammas^{-1}}(\bx^\iter, \bSS_*)
- 2 \big( \Fs(\bx^\iter) - \Fs_* \big)\\[1ex]
\label{eq:20200218a}&\leq \frac{2(2 c_{\bXX,\bGammas^{-1}} + \delta - 2)_+}{\pp_{\min}(2 - \delta)} 
\EE[  \Fs(\bx^{\iter}) - \Fs(\tilde{\bx}^{\iter+1}) \,\vert\, \Fsc_{\iter-1} ]
- 2 ( \Fs(\bx^\iter) - \Fs_*)\quad \PP \text{ a.s.}
\end{align}
and hence, since $\Fs(\bx^{\iter+1}) \leq \Fs(\tilde{x}^{\iter+1})$,  \eqref{eq:20181214g}
still holds (for the new definition of $\bx^{\iter+1}$). Thus, 
 \eqref{eq:20190131e} follows. As for the second part of the statement,
we note that, since $\Fs(\bx^{\iter+1}) \leq \Fs(\tilde{x}^{\iter+1})$, by \eqref{20200218b}, we have $(2 -\delta) \norm{\bar{\bx}^{\iter+1} - \bx^{\iter}}_{\bGammas^{-1}}^2 \leq
(2/\pp_{\min}) \EE[  \Fs(\bx^\iter) - \Fs(\bx^{\iter+1}) \,\vert\, \Fsc_{\iter-1} ]$.
Moreover, it follows from the definitions of 
$\bx^{\iter+1}$ and $\tilde{\bx}^{\iter+1}$ in algorithm~\eqref{eq:algoPRCD4} that
\begin{equation*}
\bx^{\iter+1} - \bx^\iter = (\tilde{\bx}^{\iter+1} - \bx^\iter) \mathbf{1}_{\{\Fs(\tilde{\bx}^{\iter+1}) \leq \Fs(\bx^{\iter})\}} 
\end{equation*}
and hence, by Proposition~\ref{p:20190313c}, we have
\begin{equation*}
\EE[\norm{\bx^{\iter+1} - \bx^\iter}_\bWW^2 \,\vert\, \Fsc_{\iter-1} ] \leq
\EE[\norm{\tilde{\bx}^{\iter+1} - \bx^\iter}_\bWW^2 \,\vert\, \Fsc_{\iter-1} ]
= \norm{\bar{\bx}^{\iter+1} - \bx^\iter}^2_{\bGammas^{-1}}.
\end{equation*}
In the end \eqref{eq:20190128a} with $\sigma_{\bGammas^{-1}}=0$ still holds
and the proof can continue as in that of Theorem~\ref{thm:20181214a}.
\end{proof}

\vspace{-2ex}
\section{Applications}
\label{sec:app}

In this section we show some relevant optimization problems for which 
the theoretical analysis of Algorithm~\ref{algoRCD} 
can be particularly useful.

\label{sec:appl}

\subsection{The Lasso problem}
\label{subsec:lasso}
Many papers study the convergence of coordinate descent methods for the Lasso problem and recent works prove linear convergence (see e.g., \cite{Kar18,Nec16,Nec19}). In \cite{Kar18} a random serial update of blocks is considered while in \cite{Nec19} the general framework of feasible descend methods is analyzed which include
(nonrandom) cyclic coordinate methods. In the following we discuss our contribution comparing with \cite{Nec16}.
Let $\bAs \in \R^{p\times m}$ and $\bbs \in \R^p$.
We consider the problem
\begin{equation}
\label{eq:20181015a}
\min_{\bxx \in \R^m} \frac 1 2 \norm{\bAs \bxx - \bbs}_2^2 + \lambda \norm{\bxx}_1 \quad(\lambda>0).
\end{equation}
We denote by
$\bas^i$ and $\bas_k$ the $i$-th column and $k$-th row of $\bAs$ respectively.
Since
\begin{equation*}
\frac 1 2 \norm{\bAs \bxx - \bbs}^2 
= \frac1 2 \sum_{k=1}^p ( \scalarp{\bas_k, \bxx} - \bs_k)^2
= \frac 1 2 \sum_{k=1}^p \Big(\sum_{i=1}^m \as_k^i \xx_i - \bs_k \Big)^2,
\end{equation*}
\ref{eq:B0} holds and $\eta = \max_{1 \leq k \leq p} \mathrm{card}(\mathrm{spt}(\bas_k))$.
Moreover, since
 $\nabla_i \bfs(\bxx) = \scalarp{\bas^i, \bAs \bxx - \bbs}$ 
and recalling Remark~\ref{rmk:20190128e}\ref{rmk:20190128e_i},
 conditions \ref{eq:B1} and  \ref{eq:B2} are satisfied with  $L_i = \norm{\bas^i}^2$
 and $L^{(k)} = \norm{\bas_k}^2$ respectively.
Then, Algorithm~\ref{algoRCD} (assuming that each block is made of 
one coordinate only)
writes as
\begin{equation}
\label{eq:20170915a}
\begin{aligned}
\bx^{\iter+1} 
&= \bx^\iter 
+  \sum_{i=1}^m \varepsilon_i^\iter \big[ \mathrm{soft}_{\gamma_{i} \lambda} 
\big( x_{i}^\iter - \gamma_{i} {\bas^i}^\top(\bAs \bx^\iter - \bbs) \big) - x_{i}^\iter \big]\bee_{i},
\end{aligned}
\end{equation}
where the \emph{soft thresholding operator} $\mathrm{soft}_{\gamma_{i} \lambda}$
 is defined as $\mathrm{soft}_{\gamma_{i} \lambda} (t) 
 = \mathrm{sign}(t)\max\{0, \abs{t}- \gamma_{i} \lambda\}$ and $(\bee_{i})_{1 \leq i \leq m}$ 
 is the canonical basis of $\R^m$.
Now, define $u^\iter = \bAs \bx^\iter - \bbs$. Then multiplying the equation in \eqref{eq:20170915a}
by $\bAs$ and subtracting $\bbs$ by both terms, the algorithm is equivalently written as
\begin{equation}
\label{eq:algoLasso}
\begin{array}{l}
\left\lfloor
\begin{array}{l}
\text{for}\;i \in \mathrm{spt}(\varepsilon^\iter)\\
\left\lfloor
\begin{array}{l}
\xi_i = \mathrm{soft}_{\gamma_{i} \lambda} 
\big( x_{i}^\iter - \gamma_{i} {\bas^i}^\top u^\iter \big) - x_{i}^\iter,
\end{array}
\right.\\[1ex]
\bx^{\iter+1} = \bx^{\iter}  + \sum_{i \in \mathrm{spt}(\varepsilon^\iter)}
\xi_i \bee_{i}\\[0.5ex]
u^{\iter+1}  = u^{\iter}  + \sum_{i \in \mathrm{spt}(\varepsilon^\iter)}
\xi_i \bas^i,
\end{array}
\right.
\end{array}
\end{equation}
showing that each iteration costs $O(p\tau_{\max})$ multiplications,
where $\tau_{\max}$ is the maximum number of block updates per iteration.
We now address the determination of the smoothness parameters $(\nu_i)_{1 \leq i \leq m}$.
We first give a general rule which holds for any arbitrary sampling.
Recalling  Theorem~\ref{thm:stepsizes}\ref{thm:stepsizes_iii} and
Remark~\ref{rmk:20190128e}\ref{rmk:20190128e_i} and noting that
$\{k \vert i \in I_k\} = \{k \vert i \in \mathrm{spt}(\bas_k)\} = \mathrm{spt}(\bas^i)$, if we set 
$\nu_i = \sum_{k\in \mathrm{spt}(\bas^i)}
\norm{\bas_k}^2$, then \ref{eq:S0} and hence \ref{eq:S2} holds.
This choice was considered in \cite{Nec16}.
Moreover, according to the discussion at the beginning of Section~\ref{subsec:beta}
other options for satisfying \ref{eq:S2} are
$\nu_i=\min\{\eta,\tau_{\max}\} \norm{\bas^i}^2$ or
 $\nu_i = \sum_{k \in \mathrm{spt}(\bas^i)} \min\{\mathrm{card}(\mathrm{spt}(\bas_k)), \tau_{\max}\} (\bas_k^i)^2$. This latter choice is better than the second one and,
if we assume that the nonzero entries of $A$ are about of the same magnitude,
it is also better than the first one.
Next, we face the special case of the $\tau$-nice sampling
which allows to reduce the $\nu_i$'s while satisfying \ref{eq:S1}.
Recalling the corresponding discussion in Section~\ref{subsec:beta},
we have the following alternatives: (1) set, for every $i \in [m]$, 
$\nu_i = (1 + (\tau-1)(\eta-1)/(m-1)) \norm{\bas^i}^2$;
(2) set for every $i \in [m]$, $\nu_i = 
\sum_{k\in \mathrm{spt}(\bas^i)} (1 + (\tau-1)(\mathrm{card}(\mathrm{spt}(\bas_k))-1)/(m-1)) (\bas_k^i)^2$.
Finally, we make few remarks on the convergence properties of algorithm~\eqref{eq:algoLasso}.
Since the objective function in \eqref{eq:20181015a} satisfies a quadratic growth condition on its sublevel sets \cite[Example~3.8]{Gar17}, then Remark~\ref{rmk:20190129e}, Remark~\ref{rmk:20200331a},
and Theorem~\ref{thm:20181214a}, yield linear convergence of algorithm
\eqref{eq:algoLasso} provided that \ref{eq:S2} holds.
Whereas Theorem~\ref{thm20200218a} 
ensures that if we modify algorithm \eqref{eq:algoLasso}
so that we accept the next iterate $\bx^{\iter+1}$ only if $\norm{u^{\iter+1}}^2 
+ 2\lambda \norm{\bx^{\iter+1}}_1\leq \norm{u^\iter}^2 + 2\lambda \norm{\bx^{\iter}}_1$, 
then the resulting algorithm
converges linearly under condition \ref{eq:S1}. 
If the violation of the monotonicity condition above
occurs few times along all the iterations, this modification does not increase much 
the computational cost of the algorithm (see also Section~\ref{sec:experiments}).
We stress that both
Theorem~\ref{thm:20181214a} and Theorem~\ref{thm20200218a}
ensure also almost sure and linear convergence in mean of the iterates 
of \eqref{eq:20170915a}. 
This latter result is new and is especially relevant in this context, since
 the iterates carry sparsity information. 

\subsection{Computing the minimal norm solution of a linear system}
\label{sec:linearsystem}
Let $\bAs \in \R^{m\times p}$ and $\bbs \in R(\bAs)$ (the range of $\bAs$).
Let us consider the problem
\begin{equation}
\label{eq:20170920a}
\minimize{\substack{\bxx \in \R^p \\ \bAs \bxx = \bbs}}{\frac 1 2 \norm{\bxx}^2}.
\end{equation}
Here, we denote by $\bas_i \in \R^p$ and $\bas^k \in \R^m$
the $i$-th row and the $k$-th column of $\bAs$.
The dual problem is
\begin{equation}
\label{eq:20190118h}
\minimize{\buu \in \R^m}{\frac 1 2 \norm{ \bAs^\top\buu}^2 - \scalarp{\buu,\bbs}}:=\mathcal{D}(\buu),
\end{equation}
which is a smooth convex optimization problem.
Moreover, if $\bxx_*$ is the solution of \eqref{eq:20170920a} and 
$\bxx = \bAs^\top \buu$ (the primal-dual relationship), then we have
\begin{equation}
\label{eq:20190118j}
\frac 1 2 \norm{\bxx - \bxx_*}^2 
\leq \mathcal{D}(\buu) - \inf \mathcal{D}.
\end{equation}
Then, the dual problem is clearly of the form \eqref{eq:20190110b}, 
with $\bgs=0$,
and \ref{eq:B0} and \ref{eq:B1}
are satisfied, assuming that each block is made of 
one coordinate only, with $L_i = \norm{\bas_i}^2$ and $\eta = \max_{1 \leq k \leq p} \mathrm{card}(\mathrm{spt}(\bas^k))$. So, Algorithm~\ref{algoRCD} applied to \eqref{eq:20190118h}, turns into
\begin{equation*}
\bu^{\iter+1} = \bu^\iter - \sum_{i=1}^m \varepsilon_i^\iter\gamma_{i} 
\big(\scalarp{\bas_{i},
\bAs^\top \bu_\iter} - \bs_{i}\big) 
\bee_{i}.
\end{equation*}
Now, setting $\bx^\iter = \bAs^\top \bu^\iter$ and multiplying the above equality by $\bAs^\top$, we have
\begin{equation}
\label{eq:20190129f}
\bx^{\iter+1} = \bx^\iter - \sum_{i=1}^m 
\varepsilon_i^\iter\gamma_{i} \big(\scalarp{\bas_{i}, \bx^\iter} - \bs_{i}\big) 
\bas_{i}.
\end{equation}
Since, $\bbs \in R(\bAs)$, it is easy to see, through a singular value decomposition of $\bAs$, that, for every $\buu \in \R^m$, 
$ \sigma^2_{\min}(\bAs)\,\mathrm{dist}(\buu, \argmin \mathcal{D}) 
\leq \norm{\nabla \mathcal{D}(\buu)}$ (where $\sigma_{\min}(\bAs)$ is the minimum  singular value of $\bAs$) \cite[Example 3.6]{Gar17}.
So, in view of Remark~\ref{rmk:20190603a}\ref{rmk:20190129e_00}-\ref{rmk:20190129e_0}, 
\ref{eq:EB1} is satisfied on the entire space 
with constant $c_{\R^m, \bGammas^{-1}} = (\gamma_{\min}
\sigma^2_{\min}(\bAs))^{-1}$. Therefore, if, for every $i \in [m]$, $\gamma_i =\delta/(\beta_{1,i}\norm{\bas_i}^2)$ with $0<\delta<2$, Theorem~\ref{thm:20181214a} and \eqref{eq:20190118j}
ensure the linear convergence of the iterates $\bx^n$'s towards the solution of \eqref{eq:20170920a} with rate 
$
\big(1 - \pp_{\min}\min\big\{1, \gamma_{\min}\sigma^2_{\min}(\bAs)(2-\delta)/2\big\} \big)^{1/2}
$.
We remark that \eqref{eq:20190129f} is nothing but a stochastic 
gradient descent algorithm on the problem
\begin{equation*}
\minimize{\bxx \in \R^p}{\frac 1 2 \norm{\bAs \bxx - \bbs}^2}=
\frac 1 2 \sum_{i=1}^m (\scalarp{\bas_i, \bxx} - \bs_i)^2.
\end{equation*}
Since $\norm{\bAs \bxx - \bbs}^2 \leq \norm{\bAs}^2 \norm{\bxx - \bxx_*}^2$,
we have then showed the linear convergence rate
\begin{equation*}
\frac1 2 \norm{\bAs \bx^n - \bbs}^2 - \frac 1 2 \norm{\bAs \bxx_* - \bbs}^2 = O\bigg(\sigma^2_{\max}(\bAs)
\bigg(1 - \pp_{\min}\min\bigg\{1, \frac{\sigma^2_{\min}(\bAs)\delta(2-\delta)}{2 \max_{i} \beta_{1,i} \norm{\bas_i}^2}\bigg\} \bigg)^n \bigg),
\end{equation*}
of the stochastic gradient descent with arbitrary 
and possibly variable batch size for least squares problems.
This also shows that the best rate is achieved for $\delta=1$.
We finally note that in the serial case, that is, if for every $\iter \in \N$
$\mathrm{spt}(\bvarepsilon^\iter) = \{ i_\iter \}$, multiplying equation \eqref{eq:20190129f} by $\bas_{i_\iter}^\top$, 
we have
\begin{equation*}
\scalarp{\bas_{i_\iter}, \bx^{\iter+1}} = 
\scalarp{\bas_{i_\iter}, \bx^\iter} - \gamma_{i_\iter} 
\big(\scalarp{\bas_{i_\iter}, \bx^\iter} - \bs_{i_\iter}\big) 
\norm{\bas_{i_\iter}}^2.
\end{equation*}
Therefore, since in this case $\beta_{1,i}= \beta_2=1$, we can chose the stepsizes such that $\gamma_{i}\norm{\bas_{i}}^2 = 1$ (so that $\delta=1$) and hence
 $\bx^{\iter+1}$ is a solution of the $i_\iter$-th equation of the linear system 
 $\bAs \bxx = \bbs$.
Moreover, $\bx^{\iter+1}$ is the projection of $\bx^\iter$ onto the affine space defined 
by the equation $\bas_{i_\iter} \bxx = \bs_{i_\iter}$ \cite{Wri15}.
Thus, this method is nothing but the \emph{randomized Kaczmarz method} \cite{Stro09}  and we proved linear convergence for general probabilities $\pp_i$'s, although the 
constants we derive are not optimal (see \cite{Lev10,Stro09,Wri15}).

\subsection{Regularized empirical risk minimization}

Let $\HH$ be a separable real Hilbert space.
Regularized empirical risk estimation solves the following optimization problem
\begin{equation}
\label{eq:20170217q}
\minimize{\ww \in \HH}{\frac{1}{\lambda m} 
\sum_{i=1}^m \ell(\yy_i, \scalarp{\ww,\xx_i}) + \frac 1 2 \norm{\ww}^2}
:= \mathcal{P}(\ww),
\end{equation}
where $(\xx_i, \yy_i)_{1 \leq i \leq m}$ is the training set (input-output pairs), 
 $\ell\colon \YC\times\R \to \R_+$, $\YC \subset \R$, is the \emph{loss} function, which is convex in the second variable, 
 and $\lambda>0$
is a regularization parameter. 
The dual problem of \eqref{eq:20170217q} is
\begin{equation}
\label{eq:gendual}
\minimize{\buu \in \R^m}{\frac 1 2 \buu^\top \mathsf{K} \buu
+ \frac{1}{\lambda m} \sum_{i=1}^m \ell^* (\yy_i, - \uu_i \lambda m)}
:=\mathcal{D}(\buu),
\end{equation}
where $\ell^*(\yy_i, \cdot)$ is the Fenchel conjugate of $\ell(\yy_i, \cdot)$ and
$\mathsf{K} = \bXX \bXX^\top \in \R^{m\times m}$  is the Gram matrix
of $(\xx_i)_{1 \leq i \leq m}$.
Moreover, the solutions $(\bar{\ww}, \bar{\buu})$ 
of the primal and dual problems are characterized by the following KKT conditions
\begin{equation}
\label{eq:kkt}
\begin{cases}
\displaystyle\bar{\ww} = \bXX^\top \bar{\buu} 
= \sum\limits_{i=1}^m \bar{\uu}_i \xx_i,\\[2.5ex]
\forall\, i \in \{1,\dots, m\}\ \ 
- \bar{\uu}_i m \lambda \in \partial \ell(\yy_i, \scalarp{\xx_i, \bar{\ww}}),
\end{cases}
\end{equation}
where $\partial \ell(\yy_i,\cdot)$ is the subdifferential of $\ell(\yy_i, \cdot)$. 
Note also that the first of \eqref{eq:kkt} gives the link between the dual and the primal variable
and, if $\ww = \bXX^\top \buu$, then it holds $(1/2) \norm{\ww - \bar{\ww}}^2 \leq \mathcal{D}(\buu) - \inf \mathcal{D}$.
Now, the dual problem \eqref{eq:gendual} is of the form \eqref{eq:20190110b}
and hence Algorithm~\ref{algoRCD} can be applied. The following examples give implementation details for two specific losses.

\begin{example}[Ridge regression]\ 
The least squares loss is  $\ell(s,t) = (1/2) \abs{s - t}^2$.
Then $\ell^*(s,r) = (1/2) r^2 + r s$ and, 
in this case, \eqref{eq:gendual} reduces to 
\begin{equation*}
\minimize{\buu \in \R^m}
{\frac 1 2 \buu^\top ( \mathsf{K} + \lambda m \Id )\buu  
- \byy^\top \buu} :=\mathcal{D}(\buu)
\end{equation*}
which  is strongly convex with modulus $\lambda m$ and has  solution $\bar{\buu} = (\mathsf{K} + m \lambda \Id)^{-1} \byy$.
Since $\mathcal{D}$ is smooth and 
$\nabla_i \mathcal{D} (\buu) = \bee_i^\top (\mathsf{K} + \lambda m\Id) \buu - \yy_i$,  conditions \ref{eq:B0} and \ref{eq:B1} hold with $L_i = \norm{\xx_i}^2 + \lambda m = \mathsf{K}_{i,i} + \lambda m$ and
 Algorithm~\ref{algoRCD} (with $\bgs=0$) becomes
 \begin{equation}
\label{eq:20190123b}\bu^{\iter+1} 
  = \bu^\iter - \sum_{i=1}^m \varepsilon^\iter_i \gamma_i 
\big( \bee_i^\top\mathsf{K}\bu^\iter + \lambda m u^\iter_i 
- \yy_i \big) \bee_i.
\end{equation}
Moreover, multiplying 
\eqref{eq:20190123b} by $\bXX^\top$, defining 
$w^\iter = \bXX^\top \bu^\iter$,
and recalling that 
$\mathsf{K} = \bXX \bXX^\top$, we have
\begin{align}
\label{eq:20190131a}
w^{\iter+1} 
&= w^\iter - \sum_{i=1}^m \varepsilon^\iter_i \gamma_i
\big(\scalarp{w^\iter, \xx_i}  - \yy_i\big) \xx_i
-  \lambda m \sum_{i=1}^m \varepsilon^\iter_i \gamma_i  \bu^\iter_i \xx_i.
\end{align}
Note that, since the dual problem is strongly convex with modulus $\lambda m$,
then it follows from Theorem~\ref{p:20181130a}, Remark~\ref{rmk:20190130a}, and Theorem~\ref{thm:dualgap}\ref{thm:dualgap_i} 
that, setting, for every $i \in [m]$,
$\nu_i \geq \beta_{1,i} (\mathsf{K}_{i,i} + \lambda m)$
and $\gamma_i = 1/\nu_i$, we have
\begin{equation*}
\EE[\mathcal{P}(w^\iter)] - \inf \mathcal{P} \leq 
\bigg(1 + \frac{\norm{\mathsf{K}}}{\lambda m } \bigg)
\bigg( 1 - \pp_{\min} \frac{2 \lambda m}{\nu_{\max} + \lambda m} \bigg)^\iter
\mathrm{const.}
\end{equation*}
Now, we compare algorithm \eqref{eq:20190131a} with the stochastic gradient 
descent on problem \eqref{eq:20170217q}.
Assume that $\PP(\sum_{i=1}^m \varepsilon^\iter_i = \tau) = 1$ for some 
$\tau \in [m]$. Then, 
$\pp_{\min} = \tau/m$ and
we can take 
$\zeta_i \leq (\mathsf{K}_{i,i} + \lambda m)^{-1}$, 
and set
$\nu_i = \tau/\zeta_i$ and $\gamma_i = 1/\nu_i$, so that
 algorithm \eqref{eq:20190131a} turns into
\begin{equation}
\label{eq:20190123d}
w^{\iter+1} =w^\iter - \sum_{i=1}^m \varepsilon^\iter_i \frac{\zeta_i}{\tau}
\big(\scalarp{w^\iter, \xx_i}  - \yy_i\big) \xx_i
-  \lambda m \sum_{i=1}^m \varepsilon^\iter_i  \frac{\zeta_i}{\tau} u^\iter_i \xx_i.
\end{equation}
If we apply stochastic gradient descent with batch size $\tau \in [m]$
and stepsize $\zeta>0$ directly on the primal problem \eqref{eq:20170217q} (multiplied by $\lambda m$), and recalling that $w^\iter = \sum_{i=1}^m u^\iter_i \xx_i$, 
 we have
\begin{align}
 w^{\iter + 1}  
\label{eq:20190123e}= w^\iter - \frac{\zeta}{\tau} \sum_{i=1}^m \varepsilon^\iter_i
( \scalarp{w^\iter, \xx_i} - \yy_i ) \xx_i -  \lambda m
\frac{\zeta}{m} \sum_{i=1}^m u_i^\iter \xx_i.
\end{align}
Then, comparing \eqref{eq:20190123d} and \eqref{eq:20190123e} we see that,
provided that $\zeta_i = \zeta$ for every $i \in [m]$,
they only differ for the replacement $(1/m) \sum_{i=1}^m u^\iter_i \xx_i\ \leftrightarrow\ 
(1/\tau) \sum_{i=1}^m \varepsilon^\iter_i u^\iter_i \xx_i$.
We stress that the stepsize $\zeta$ in the stochastic gradient descent
algorithm \eqref{eq:20190123e} is normally
set according to the spectral norm of 
$\mathsf{K} + \lambda m \Id$, which may be difficult to compute. 
On the contrary in algorithm \eqref{eq:20190123d} the stepsizes $\zeta_i$'s
are simply set as $\zeta_i \leq 1/(\mathsf{K}_{i,i} + \lambda m)$,
so they allow possibly much longer steps and also do not require any SVD computation.
\end{example}

\begin{example}[Support vector machines]
The \emph{hinge loss} is 
$\ell(s,t) = (1-s t)_+$. Then we have $\ell^*(s,r) = r + \iota_{[0,1]}(s r)$
and the dual problem \eqref{eq:gendual} 
is 
\begin{equation}
\label{eq:20190131b}
\minimize{\buu \in \R^m} 
{\frac 1 2 \buu^\top \mathsf{K} \buu
 - \byy^\top\buu + \iota_{(\lambda m)^{-1}[0,1]^m}(\byy \odot \buu)}.
\end{equation}
Then Algorithm~\ref{algoRCD} on the dual turns into a parallel random block-coordinate
projected gradient descent method.
Moreover, it follows from Remark~\ref{rmk:20181219e}\ref{rmk:20181219e_iii}
that the objective in \eqref{eq:20190131b}
satisfies \ref{eq:EB1} on its domain.
Therefore, it follows from
Theorem~\ref{thm:20181214a}, Theorem~\ref{thm:stepsizes}\ref{thm:stepsizes_i}, and
Theorem~\ref{thm:dualgap}\ref{thm:dualgap_ii} 
that
$\EE[\mathcal{P}(w^\iter)]- \inf \mathcal{P}$ converges linearly to zero,
provided that, for all $i \in [m]$, $\nu_i \geq \beta_{1,i} \mathsf{K}_{ii}$ and $\gamma_i < 2/\nu_i$.
\end{example}

\section{Numerical Experiments}
\label{sec:experiments}
In this section we consider a Lasso problem, 
that is,
\begin{equation}
\label{eq:20200312a}
\min_{\bxx \in \R^m} \frac 1 2 \norm{\bAs \bxx - \bbs}_2^2 + \lambda \norm{\bxx}_1 \quad(\lambda>0),
\end{equation}
where $\bAs \in \R^{p\times m}$ is generated with random entries uniformly distributed in $[-1,1]$ so that
each row is sparse and $\bbs = \bAs \bar{\bxx} + 0.06\cdot \alpha$ with $\alpha \sim N(0,\Id_p)$ and $\bar{x}$
a sparse vector in $\R^m$.
We implement Algorithm~\ref{algoRCD} with $\gamma_i = \delta/\nu_i$ ($0<\delta<2$) and a $\tau$-nice uniform sampling, as described
in Section~\ref{subsec:lasso}.
We present two experiments. The first compares conditions
\ref{eq:S1} and \ref{eq:S2} for the determination of the stepsizes $\gamma_i$. The second one investigates the role played by $\delta$.
In all the experiments we empirically checked that the algorithm is essentially descending
in the sense that during the iterations there are very few violations of the descent property and with low magnitude. 
So, since the objective function in \eqref{eq:20200312a} satisfies \ref{eq:EB1} on the sublevel sets,
in virtue of Theorem~\ref{thm20200218a}, linear convergence holds.

%------------------------------------------------------------------------------
\subsection*{Condition~\ref{eq:S1} vs \ref{eq:S2} and the effectiveness of the parallel strategy.}
We compare the conditions \ref{eq:S1} and \ref{eq:S2} for the stepsizes selection
and we checked the critical role played by \ref{eq:S1}
for the effectiveness of the parallel strategy on problems with sparse structure.
Here we set $\delta=1$. In Figure~\ref{lasso_stepsizes}, Algo1 uses smoothness parameters specifically 
designed for the $\tau$-nice sampling, that is, $\nu_i=\beta_1 \norm{\bas^i}^2$ with
$\beta_1 = 1 + (\tau-1)(\eta-1)/(m-1)$ (making \ref{eq:S1} satisfied), while Algo2 uses a more conservative choice for the smoothness parameters which is valid for any sampling updating a maximum of $\tau$ blocks per iteration, that is $\nu_i = \beta_2 \norm{\bas^i}^2$ with $\beta_2 = \min\{\tau, \eta\}$ (making also \ref{eq:S2} satisfied). 
\begin{figure}[t]
\begin{center}
\vspace{-2ex}
\includegraphics[width=0.5\textwidth]{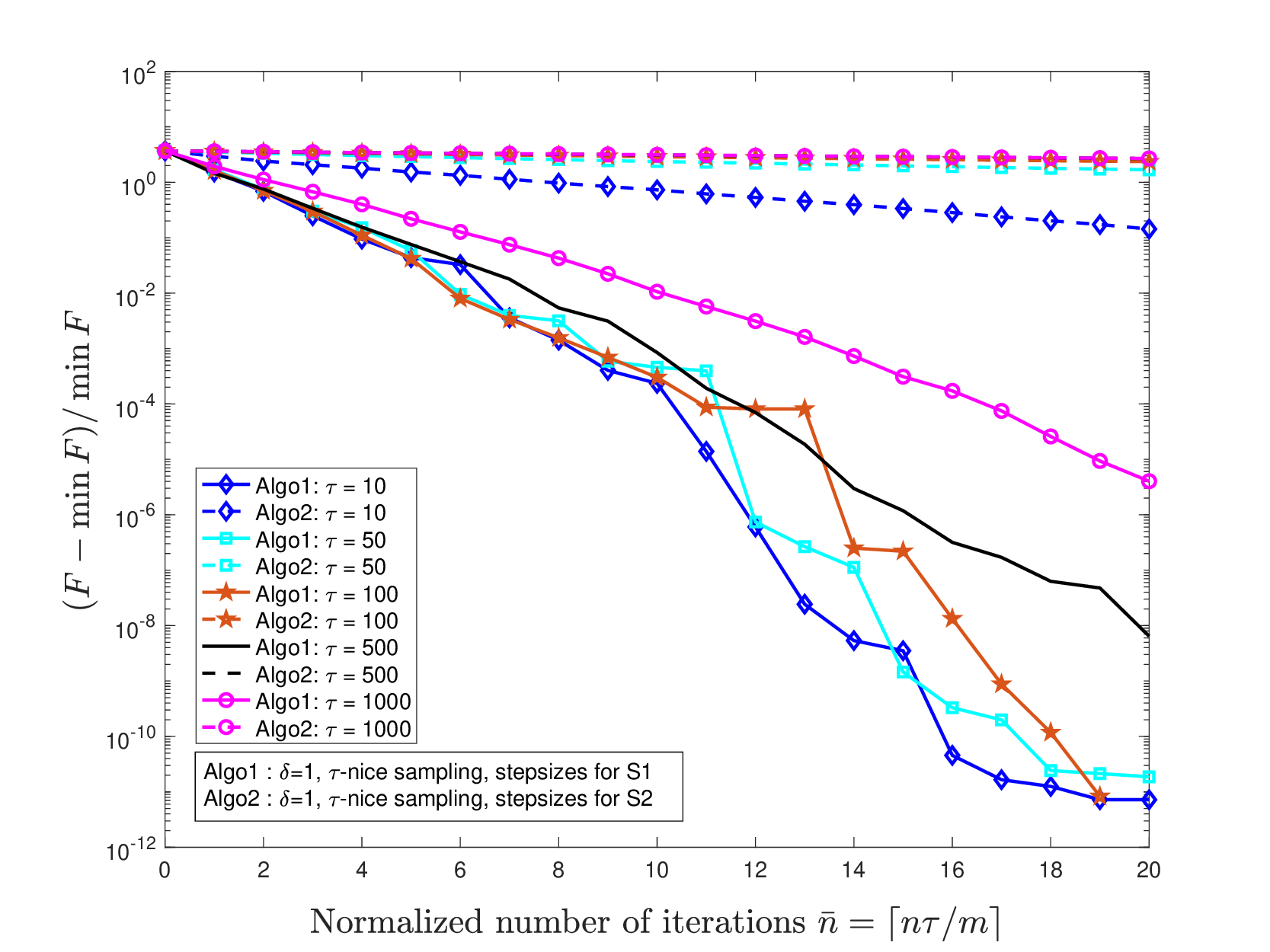}
\hspace{-3ex}
\includegraphics[width=0.5\textwidth]{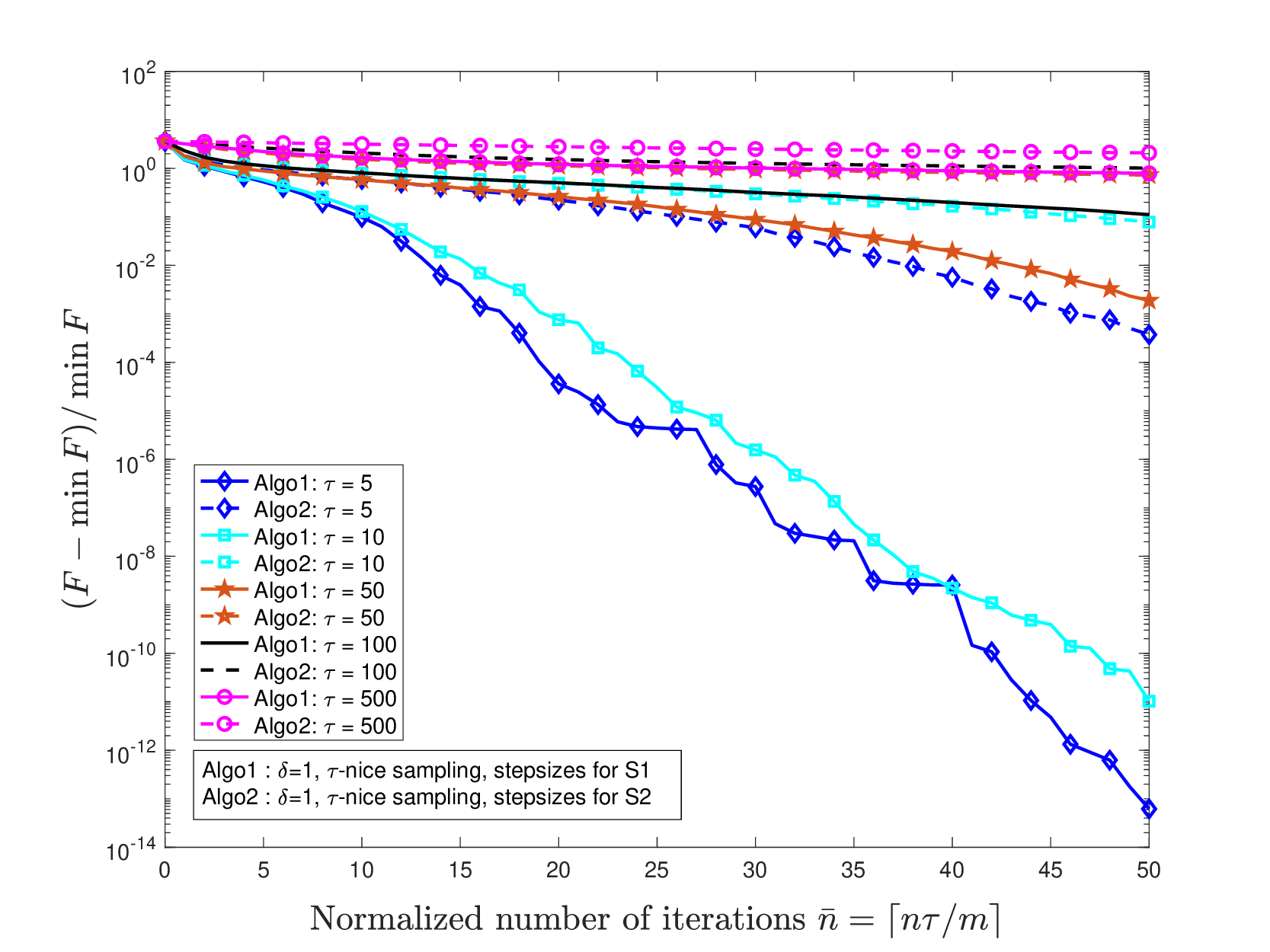}
\end{center}
\vspace{-2em}
\caption{\small Comparison between \ref{eq:S1} and \ref{eq:S2} for the the stepsizes selection in a Lasso problem.
Left: $5\cdot 10^4$ equations in $10^5$ unknowns; degree of partial separability $\eta=148$.
Right: $10^3$ equations in $5\cdot 10^3$ unknowns;
degree of partial separability $\eta = 563$.}
\label{lasso_stepsizes}
\end{figure}
\begin{figure}[h!]
\begin{center}
\vspace{-1ex}
\includegraphics[width=0.5\textwidth]{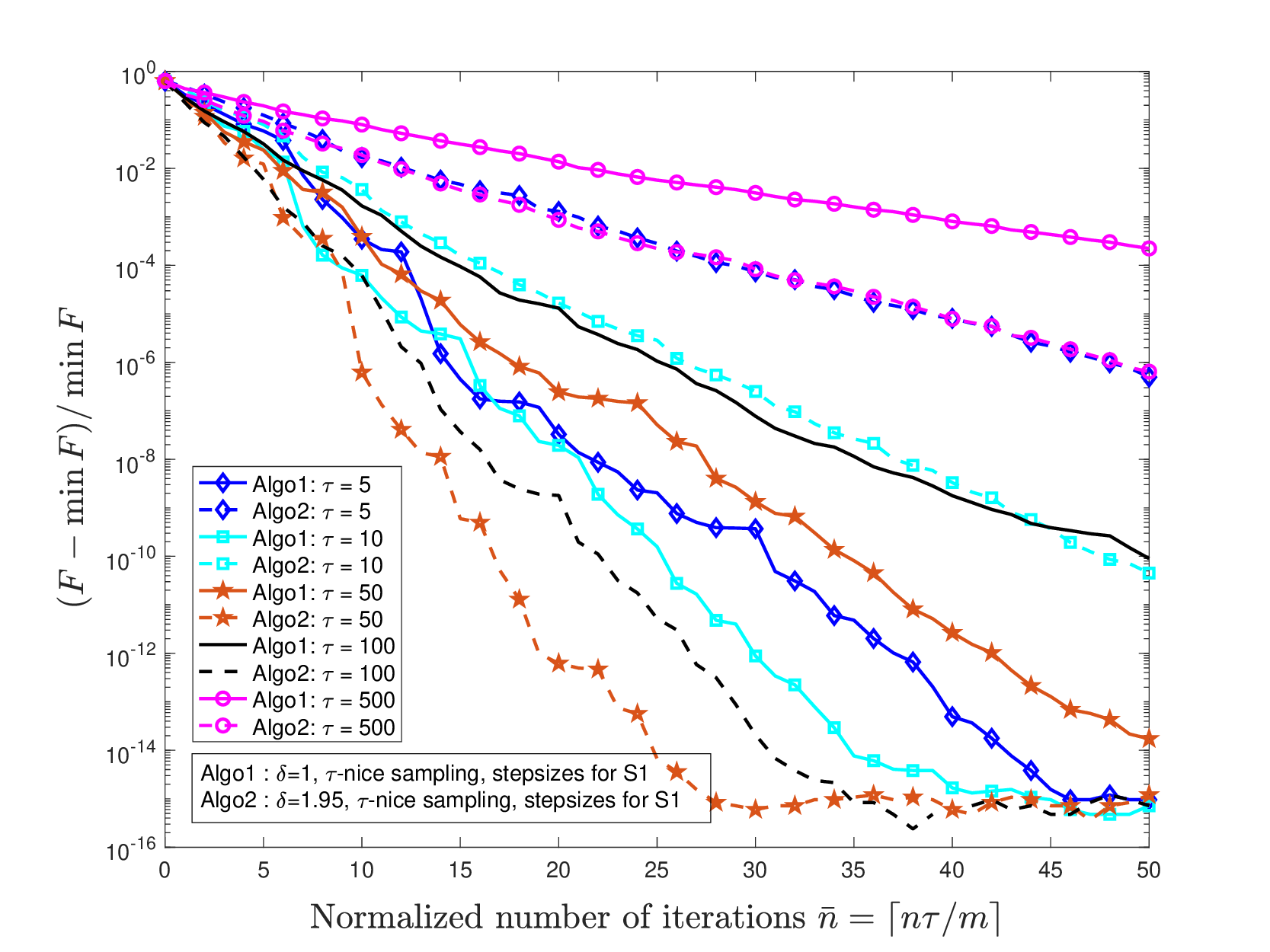}
\hspace{-3ex}
\includegraphics[width=0.5\textwidth]{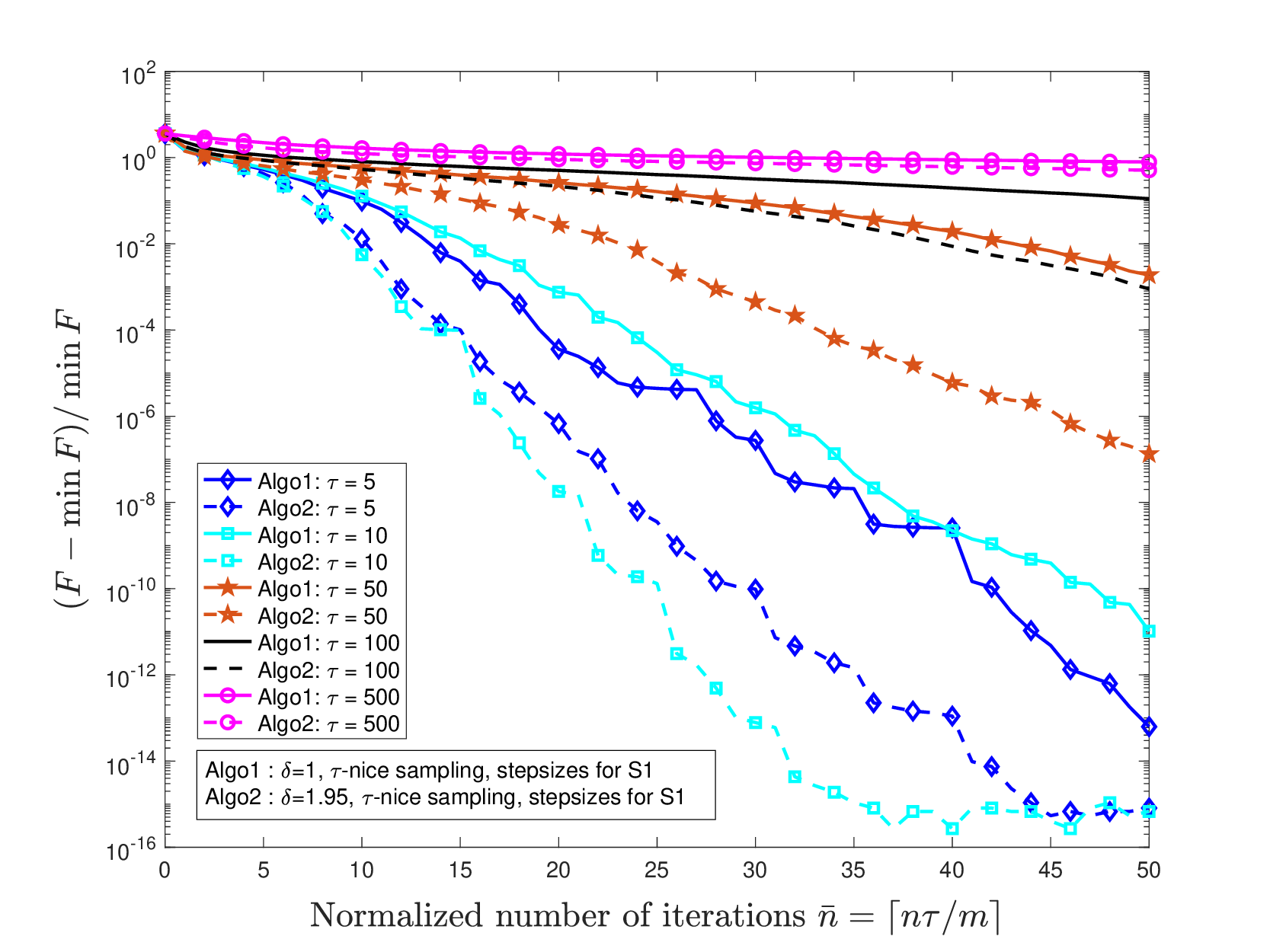}
\includegraphics[width=0.5\textwidth]{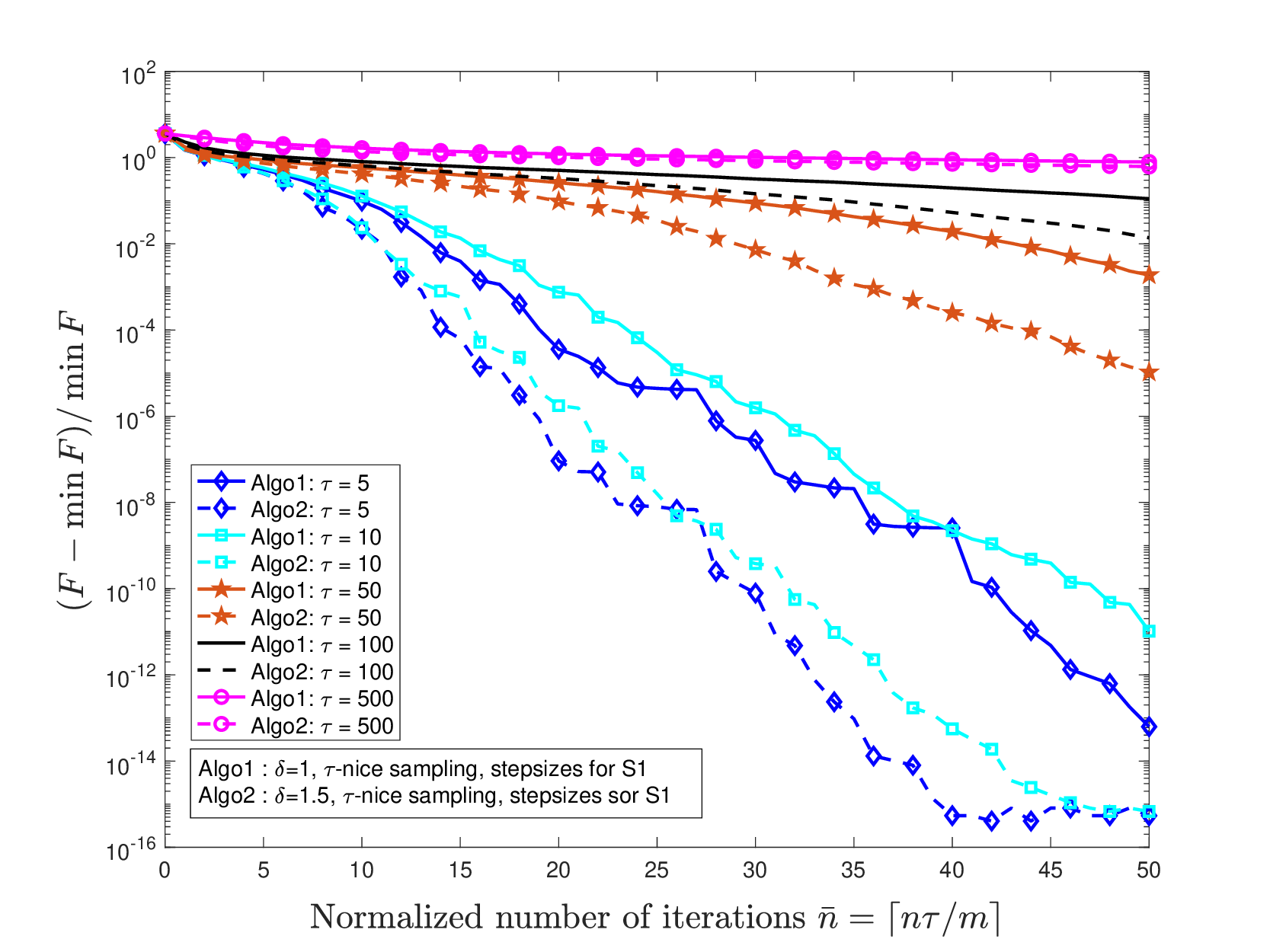}
\hspace{-3ex}
\includegraphics[width=0.5\textwidth]{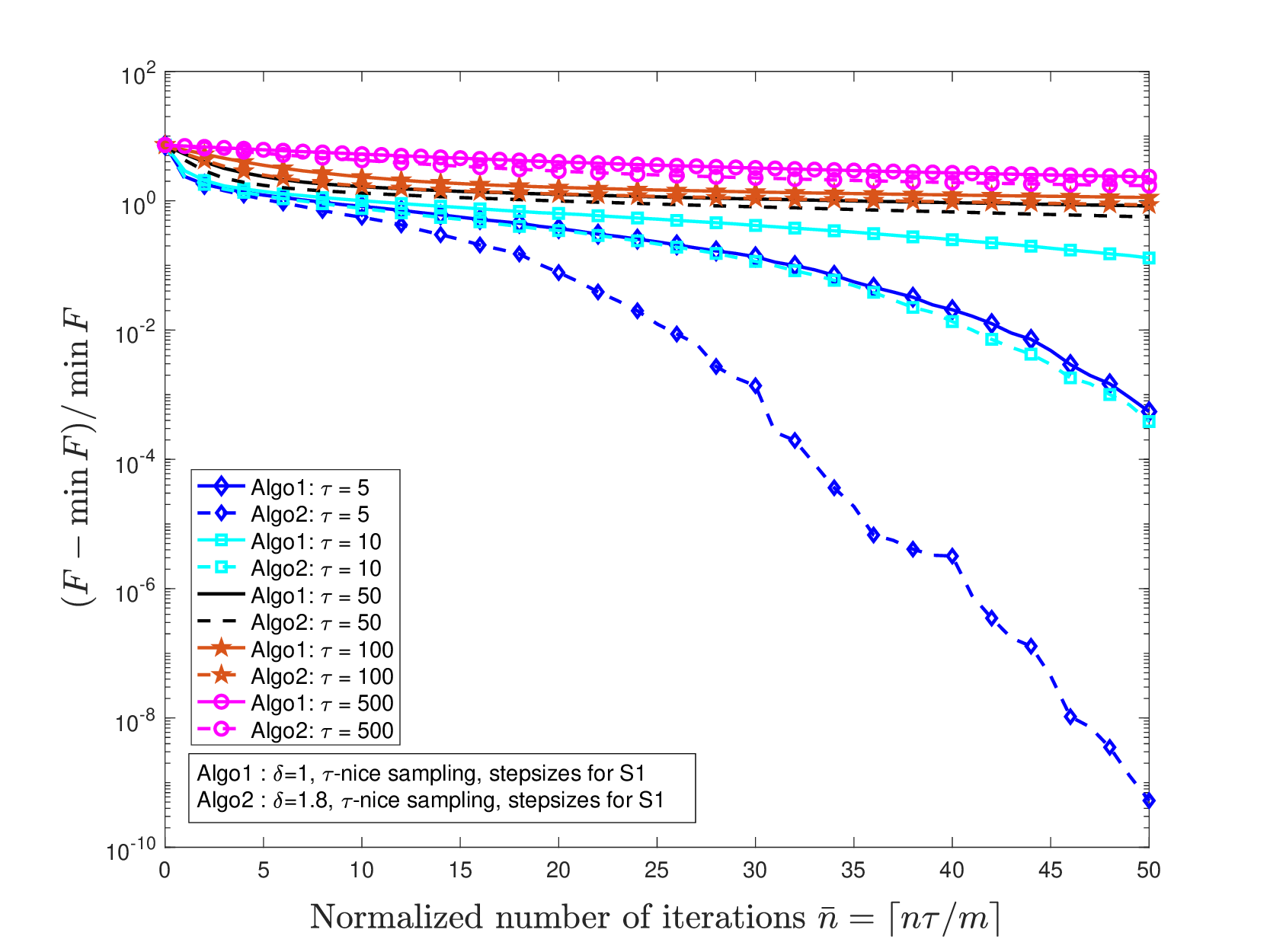}
\end{center}
\vspace{-2em}
\caption{\small Comparison between $\delta=1$ and $\delta>1$.
Lasso problem with $n=10^3$ equations in $m= 5\cdot 10^3$ unknowns. 
Degree of partial separability $\eta=71$ (top left), $\eta= 563$ (top right and bottom left), and $\eta = 2594$ (bottom right). The choice $\delta=1$ is better than $\delta>1$ only for $\eta=71$ and $\tau=5,10$.}
\label{lasso_overrelaxed2}
\end{figure}
In the left diagram, we considered a large scale setting with $\eta\ll m$. In that case
$\beta_1$ may be much smaller than $\beta_2$ leading to significantly larger stepsizes. Moreover, and more importantly, we note that as long as $\tau$ is small enough the behavior of Algo1 does not depend on $\tau$ (indeed $\tau=10,50,100$ perform equally well), whereas this is not true for Algo2.
This feature of \ref{eq:S1}, first noted in \cite{Ric16}, has been already discussed after 
Theorem~\ref{thm:20171207a} and  is at the basis 
of the effectiveness of the parallel strategy.
Indeed in the small-$\tau$ regime described above, 
the various versions of Algo1 depicted in Figure~\ref{lasso_stepsizes}(left)
have the same total computation cost ($\bar{n} m$ block-coordinate updates),
but the parallel implementation on $\tau$ cores is $\tau$ times faster than the serial one ($\tau=1$).
Finally, in the right diagram of Figure~\ref{lasso_stepsizes} we show a scenario in which 
$\eta/m$ is larger (the problem is less sparse).
In such situation we see that the difference between the two 
stepsize selection criteria is less evident for $\tau \geq 50$.
Moreover, Algo1 is more sensitive to $\tau$ (compare $\tau=10$ and $\tau= 50$),
so that the benefit of the parallel scheme is reduced.
%-----------------------------------------------------
\subsection*{The effect of $\delta>1$}
Here we study the effect of over-relaxing the stepsizes, meaning choosing $\delta>1$.
We compare Algorithm~\ref{algoRCD} with $\delta=1$ and several choices of $\delta>1$.
Figure~\ref{lasso_overrelaxed2} considers different scenarios for the degree of separability $\eta$ of $f$.
In those cases we see that choosing $\delta>1$ usually speeds up the convergence,
depending on the parameter $\eta$ of partial separability of $f$
and the number $\tau$ of parallel block updates.
This fact seems not to occur when both $\eta/m$ and $\tau/m$ are very small.

\appendix

\section{Structured Lipschitz smoothness}

In this section we discuss the Lipschitz smoothness properties of $\bfs$
under the hypotheses \ref{eq:B0} and \ref{eq:B1}
and we  prove Theorem~\ref{thm:stepsizes}. Most of the results presented in this section are basically given in \cite{Ric16}. However, here they are rephrased in our notation and extended to our more general assumptions.

\begin{proposition}
\label{p:20181026a}
Let $\bfs\colon \bHH \to \R$ be a convex function satisfying assumptions 
\ref{eq:B0} and \ref{eq:B1}.
Let $I$ be a nonempty subset of $[m]$ and
let $(q_i)_{i \in I} \in \mathbb{R}_{++}$ be such that $\sum_{i \in I \cap I_k} q_i \leq 1$, for every $k \in [p]$.
Then for every $\bxx$ and $\byy \in \bHH$ 
such that $\mathrm{spt}(\bxx - \byy) \subset I$, we have
\begin{equation}
\label{eq:20181014b}
\bfs(\byy) \leq \bfs(\bxx) + 
\scalarp{\nabla \bfs(\bxx), \byy - \bxx} + \frac 1 2 \sum_{i \in I} \frac{L_i}{q_i} \norm{\yy_i - \xx_i}^2.
\end{equation}
\end{proposition}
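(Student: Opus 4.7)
The plan is to reduce the multivariable estimate to the one-block descent lemma through a per-summand convexity argument. First I would fix $\bxx,\byy$ with $\mathrm{spt}(\bxx-\byy)\subset I$, set $\bar{\xx}_k=\sum_i \UU_{k,i}\xx_i$, and observe that the difference argument splits: for each $k$,
\begin{equation*}
\sum_i \UU_{k,i}\yy_i=\bar{\xx}_k+\sum_{i\in I_k\cap I}\UU_{k,i}(\yy_i-\xx_i).
\end{equation*}
Since by hypothesis $q_0^{(k)}:=1-\sum_{i\in I_k\cap I}q_i\geq 0$, I can rewrite this as the convex combination $q_0^{(k)}\bar{\xx}_k+\sum_{i\in I_k\cap I}q_i\bigl(\bar{\xx}_k+q_i^{-1}\UU_{k,i}(\yy_i-\xx_i)\bigr)$ and apply Jensen's inequality to the convex function $\fs_k$.

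Next I would sum the resulting inequalities over $k=1,\dots,p$, swap the order of summation, and use the key identity that for $k$ with $i\notin I_k$ we have $\UU_{k,i}=0$, so $\fs_k(\bar{\xx}_k+q_i^{-1}\UU_{k,i}(\yy_i-\xx_i))=\fs_k(\bar{\xx}_k)$. This lets me re-assemble the inner sum $\sum_{k\,:\,i\in I_k}\fs_k(\bar{\xx}_k+q_i^{-1}\UU_{k,i}(\yy_i-\xx_i))$ into $\bfs(\bxx+q_i^{-1}\E_i(\yy_i-\xx_i))$ minus a harmless correction that cancels after bookkeeping. After simplification I expect to reach the intermediate bound
\begin{equation*}
\bfs(\byy)\leq \Bigl(1-\sum_{i\in I}q_i\Bigr)\bfs(\bxx)+\sum_{i\in I}q_i\,\bfs\bigl(\bxx+q_i^{-1}\E_i(\yy_i-\xx_i)\bigr).
\end{equation*}

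Finally I would invoke the standard single-coordinate descent lemma: by \ref{eq:B1} the map $\vv_i\mapsto\bfs(\bxx+\E_i\vv_i)$ has $L_i$-Lipschitz gradient on $\HH_i$, hence
\begin{equation*}
\bfs\bigl(\bxx+q_i^{-1}\E_i(\yy_i-\xx_i)\bigr)\leq \bfs(\bxx)+q_i^{-1}\scalarp{\nabla_i\bfs(\bxx),\yy_i-\xx_i}+\frac{L_i}{2 q_i^{2}}\norm{\yy_i-\xx_i}^2.
\end{equation*}
Substituting back, the $\bfs(\bxx)$ terms collapse to a single $\bfs(\bxx)$, the first-order terms assemble (using $\mathrm{spt}(\byy-\bxx)\subset I$) into $\scalarp{\nabla\bfs(\bxx),\byy-\bxx}$, and the quadratic contributions combine into $\tfrac12\sum_{i\in I}(L_i/q_i)\norm{\yy_i-\xx_i}^2$, yielding exactly \eqref{eq:20181014b}.

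The main obstacle will be executing the index bookkeeping cleanly, namely verifying that swapping $\sum_k\sum_{i\in I_k\cap I}$ with $\sum_{i\in I}\sum_{k\,:\,i\in I_k}$ and extending sums to all $k$ (exploiting $\UU_{k,i}=0$ when $i\notin I_k$) really produces the clean form with $\bfs$ instead of partial sums of $\fs_k$. Convexity of $\bfs$ itself is not needed in the argument—only convexity of each $\fs_k$ (for Jensen) and the blockwise Lipschitzness of $\nabla_i\bfs$ (for the one-block descent lemma)—so the hypothesis structure matches perfectly.
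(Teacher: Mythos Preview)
Your proposal is correct and follows essentially the same approach as the paper's proof: write each $\fs_k$'s argument as a convex combination using the weights $q_i$, apply convexity of $\fs_k$, extend the inner sum from $I\cap I_k$ to $I$ via the observation $\UU_{k,i}=0$ for $i\notin I_k$, swap summations to reassemble $\bfs(\bxx+q_i^{-1}\E_i\vv_i)$, and finish with the one-block descent lemma from \ref{eq:B1}. The index bookkeeping you flag as the main obstacle is exactly the step the paper carries out, and it works precisely as you describe.
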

\begin{proof}
Let $\bvv = \byy- \bxx$ and, for every $k \in [p]$, set $\zz^{(k)} = \sum_{i=1}^m \UU_{k,i} \xx_i$. Then 
\begin{equation*}
\bfs(\byy) = \sum_{k=1}^p \fs_k \bigg(\sum_{i=1}^m \UU_{k,i}(\xx_i + \vv_i)\bigg)
= \sum_{k=1}^p \fs_k \Big(z^{(k)} + \sum_{i \in I} \UU_{k,i} \vv_i \Big)
= \sum_{k=1}^p \fs_k \Big(z^{(k)} + \sum_{i \in I\cap I_k} \UU_{k,i} \vv_i \Big).
\end{equation*}
Now, for every $k \in [p]$, we have
\begin{equation*}
\zz^{(k)} + \sum_{i \in I \cap I_k} \UU_{k,i} \vv_i
= \Big( 1 - \sum_{i \in I\cap I_k} q_i \Big) \zz^{(k)} + \sum_{i \in I\cap I_k} q_i \big( \zz^{(k)} 
+ q_i^{-1} \UU_{k,i} \vv_i \big).
\end{equation*}
Therefore, using the convexity of each $\fs_k$ we have
\begin{equation*}
\bfs(\byy)  = \sum_{k=1}^p \fs_k \Big(\zz^{(k)} + \sum_{i \in I\cap I_k} \UU_{k,i} \vv_i \Big)
\leq \sum_{k=1}^p \bigg[\Big( 1 - \sum_{i \in I\cap I_k} q_i \Big) \fs_k(\zz^{(k)}) 
+ \sum_{i \in I \cap I_k}q_i \fs_k \big( \zz^{(k)} + q_i^{-1} \UU_{k,i} \vv_i \big) \bigg].
\end{equation*}
It follows from the definition of $I_k$ that
$\sum_{i \in I\setminus I_k} q_i \fs_k ( \zz^{(k)} + q_i^{-1} \UU_{k,i} \vv_i)
=\sum_{i \in I\setminus I_k} q_i \fs_k (\zz^{(k)})$.
Hence, switching the order of summation,
and using the fact that 
$\bfs(\xx_1, \dots, \xx_{i-1}, \cdot, \xx_{i+1}, \dots, \xx_m)$
is Lipschitz smooth with constant $L_i$, we have
\begin{align*}
\bfs(\byy) 
&\leq 
\sum_{k=1}^p \bigg[\Big( 1 - \sum_{i \in I} q_i \Big) \fs_k(\zz^{(k)}) 
+ \sum_{i \in I}q_i \fs_k \big( \zz^{(k)} + q_i^{-1} \UU_{k,i} \vv_i \big) \bigg]\\
& = \Big( 1 - \sum_{i \in I} q_i \Big) \bfs(\bxx) +
\sum_{i \in I} q_i \sum_{k=1}^p \fs_k \bigg( \sum_{j=1}^m \UU_{k,j} \big( \xx_j + q_i^{-1} (\E_i \vv_i)_j \big)\bigg)\\
& = \Big( 1 - \sum_{i \in I} q_i \Big) \bfs(\bxx) +
\sum_{i \in I} q_i \bfs\big( \bxx + q_i^{-1} \E_i \vv_i\big)\\
&\leq  \Big( 1 - \sum_{i \in I} q_i \Big) \bfs(\bxx) +
\sum_{i \in I} q_i \bigg[ \bfs(\bxx) + \scalarp{\nabla_i \bfs(\bxx), q_i^{-1}\vv_i} + \frac{L_i}{2} \norm{q_i^{-1} \vv_i}^2\bigg]\\
& = \bfs(\bxx) + \scalarp{\nabla \bfs(\bxx), \bvv} + \frac 1 2 \sum_{i \in I} \frac{L_i}{q_i} \norm{\vv_i}^2.
\qedhere
\end{align*}
\end{proof}

\begin{corollary}
\label{p:20181014a}
Let $\bfs\colon \bHH \to \R$ be a convex function satisfying  
\ref{eq:B0} and \ref{eq:B1}.
Let $\eta = \max_{1 \leq k \leq m} \mathrm{card}(I_k)$, $(\gamma_i)_{1 \leq i \leq m} \in \R^m_{++}$, and
 $(q_i)_{1 \leq i \leq m} \in \mathbb{R}^m_{++}$ be such that, for every 
$k \in [p]$, 
$\sum_{i \in I_k} q_i \leq 1$. Let 
$\mathsf{\Lambda} = \bigoplus_{i=1}^m L_i \Id_i$,
$\mathsf{\Gamma} = \bigoplus_{i=1}^m \gamma_i \Id_i$, and
$\mathsf{Q} = \bigoplus_{i=1}^m q_i \Id_i$.
Then, the function $\bfs$ is
 Lipschitz smooth
 \begin{enumerate}[{\rm (i)}]
\item\label{p:20181014a_1} in the metric $\norm{\cdot}_{\bWW}$ 
defined by $\bWW = \mathsf{\Lambda} \mathsf{Q}^{-1}$ with constant $1$;
\item\label{p:20181014a_4} in the metric $\norm{\cdot}_{\mathsf{\Gamma}^{-1}}$, with 
constant\footnote{ 
This constant is $\leq \delta\eta/\min_{1 \leq i \leq m}\beta_{1,i}$ if the 
$\gamma_i$'s are set according to \eqref{eq:stepsizerule} with the $\nu_i$'s
as in Theorem~\ref{thm:stepsizes}\ref{thm:stepsizes_i}.
}
$\max_{1 \leq k \leq p} \sum_{i \in I_k} \gamma_i L_i$.
\item\label{p:20181014a_2} in the (original) metric of $\bHH$ with constant 
$\max_{1 \leq k \leq p}\sum_{i \in I_k} L_i$;
\item\label{p:20181014a_3} in the metric $\norm{\cdot}_{\mathsf{\Lambda}}$, with constant $\eta$.
\end{enumerate}
\end{corollary}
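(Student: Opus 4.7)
\textbf{Proof plan for Corollary~\ref{p:20181014a}.}

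The plan is to apply Proposition~\ref{p:20181026a} with $I=[m]$ in every case, choosing the auxiliary weights $(q_i)_{1\leq i\leq m}$ so as to match each target norm. The key link between the structured estimate \eqref{eq:20181014b} and Lipschitz smoothness in an arbitrary metric $\norm{\cdot}_{\bWW}$ is that, by definition of $\nabla^{\bWW}\bfs$, the inequality
\begin{equation*}
\bfs(\byy) \leq \bfs(\bxx) + \scalarp{\nabla^{\bWW}\bfs(\bxx),\byy-\bxx}_{\bWW} + \frac{L}{2}\norm{\byy-\bxx}_{\bWW}^2
\end{equation*}
is equivalent to
\begin{equation*}
\bfs(\byy) \leq \bfs(\bxx) + \scalarp{\nabla\bfs(\bxx),\byy-\bxx} + \frac{L}{2}\norm{\byy-\bxx}_{\bWW}^2,
\end{equation*}
so proving each item reduces to matching the quadratic term $\frac{1}{2}\sum_{i=1}^m (L_i/q_i)\norm{\yy_i-\xx_i}^2$ produced by Proposition~\ref{p:20181026a} with $\frac{L}{2}\norm{\byy-\bxx}_{\bWW}^2$, under the constraint $\sum_{i\in I_k} q_i\leq 1$ for every $k\in[p]$.

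For \ref{p:20181014a_1}, the choice of $q_i$'s in the hypotheses is already admissible, and one directly recognizes $\sum_{i=1}^m (L_i/q_i)\norm{\yy_i-\xx_i}^2 = \norm{\byy-\bxx}_{\bWW}^2$ with $\bWW=\Lambdas\mathsf{Q}^{-1}$, yielding constant $1$. For \ref{p:20181014a_4}, set $L=\max_{1\leq k\leq p}\sum_{i\in I_k}\gamma_i L_i$ and take $q_i=\gamma_i L_i/L$; then $\sum_{i\in I_k} q_i=(1/L)\sum_{i\in I_k}\gamma_i L_i\leq 1$, and $L_i/q_i=L/\gamma_i$ gives precisely $L\norm{\byy-\bxx}_{\Gammas^{-1}}^2$. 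For \ref{p:20181014a_2}, set $L=\max_{1\leq k\leq p}\sum_{i\in I_k}L_i$ and $q_i=L_i/L$, so that $L_i/q_i=L$ matches the original norm. For \ref{p:20181014a_3}, take the uniform choice $q_i=1/\eta$, so that $\sum_{i\in I_k}q_i=\mathrm{card}(I_k)/\eta\leq 1$ and $L_i/q_i=\eta L_i$, which is exactly $\eta\norm{\byy-\bxx}_{\Lambdas}^2$ after summation.

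There is no real obstacle: the entire proof is a bookkeeping exercise checking admissibility of the weights and reading off the resulting quadratic form. The only mild care is to verify that in \ref{p:20181014a_4} and \ref{p:20181014a_2} one is allowed to choose the $q_i$'s nonuniformly tuned to the Lipschitz constants, which is permitted since Proposition~\ref{p:20181026a} imposes only the constraint $\sum_{i\in I_k}q_i\leq 1$ for each $k$. No other ingredient is needed.
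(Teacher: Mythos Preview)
Your proposal is correct and follows essentially the same route as the paper: apply Proposition~\ref{p:20181026a} with $I=[m]$ and the indicated choices of $q_i$, then invoke the descent-lemma characterization of Lipschitz smoothness in the relevant metric. The only cosmetic difference is that the paper obtains \ref{p:20181014a_2} and \ref{p:20181014a_3} as specializations of \ref{p:20181014a_4} (taking $\gamma_i=1$ and $\gamma_i=1/L_i$, respectively), which yields exactly the weights you wrote down directly.
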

\begin{proof}
\ref{p:20181014a_1}:
It follows from Proposition~\ref{p:20181026a} with $I = [m]$ and  noting 
that $\scalarp{\nabla \bfs(\bxx), \byy - \bxx} = \scalarp{\nabla^\bWW \bfs(\bxx), \byy - \bxx}_\bWW$
and then invoking the characterization of the Lipschitz continuity of the gradient troughout the descent lemma (see \cite[Theorem~18.15(iii)]{book1}).

\ref{p:20181014a_4}: It follows from \eqref{eq:20181014b} by choosing $I = [m]$, 
$q_i = \gamma_i L_i/(\max_{1 \leq k \leq p}\sum_{j \in I_k} \gamma_j L_j)$, and noting 
that $\scalarp{\nabla \bfs(\bxx), \byy - \bxx} = \scalarp{\nabla^{\mathsf{\Gamma}^{-1}} \bfs(\bxx), \byy - \bxx}_{\mathsf{\Gamma}^{-1}}$
and then invoking \cite[Theorem~18.15(iii)]{book1}.

\ref{p:20181014a_2}: It follows from \ref{p:20181014a_4} with $\gamma_i = 1$.

\ref{p:20181014a_3}: It follows from \ref{p:20181014a_4} with $\gamma_i = 1/L_i$.
\end{proof}

\begin{remark}
\label{rmk:20181030a}
If $\eta = m$ ($\bfs$ is not partially separable),
Corollary~\ref{p:20181014a}-\ref{p:20181014a_2}-\ref{p:20181014a_3} establishes that
\begin{equation*}
L_i \leq 
L
\leq \sum_{i=1}^m L_i
\quad\text{and}\quad
L_{\norm{\cdot}_\mathsf{\Lambda}}
\leq m.
\end{equation*}
We show that the above bounds are tight.
Indeed,
 if we consider $\bfs(\bxx) = (1/2) \norm{\bAs \bxx - \bbs}_2^2$,
 where $\bAs \in \mathbb{R}^{n\times m}$ and $\bbs \in \mathbb{R}^m$,
then we have $L_i = \norm{\bas_i}^2$ (where $\bas_i$ is the $i$-th column of $\bAs$),
 so that $\sum_{i=1}^m L_i = \norm{\bAs}_F^2$.
Instead, since $\nabla \bfs (\bxx)= \bAs^*(\bAs \bxx - \bbs)$,
 the Lipschitz constant of $\nabla \bfs$ is $\norm{\bAs}^2$.
It is well-known that if $\bAs$ is rank one, then $\norm{\bAs}^2 = \norm{\bAs}_F^2$,
so in this case the Lipschitz constant of  $\nabla \bfs$ is exactly $\sum_{i=1}^m L_i$.
Moreover, if in addition the columns of $\bAs$ have the same norm, then 
$L = \sum_{j=1}^m L_j = m L_i$ and hence 
$L_{\norm{\cdot}_{\Lambdas}} = m$.
We finally note that if $\bAs$ is an orthonormal matrix, then $\norm{\bAs}^2 = 1$ and hence
 $1 = L_i = L = L_{\norm{\cdot}_{\Lambdas}} < \sum_{i=1}^m L_i = m$.
\end{remark}

\begin{corollary}
\label{cor:20181026b}
Let $\bfs\colon \bHH \to \R$ be a function satisfying  
\ref{eq:B0} and \ref{eq:B1}.
Let $\beps \in \{0,1\}^m$ and  $\bxx, \bvv \in \bHH$.
Then, 
\begin{equation}
\label{eq:20181109c}
\bfs(\bxx + \beps \odot \bvv) \leq \bfs(\bxx) + \sum_{i =1}^m \epsilon_i \scalarp{\nabla_i \bfs(\bxx), \vv_i}
+ \max_{1 \leq k \leq p}\Big(\sum_{i \in I_k} \epsilon_i\Big) \sum_{i=1}^m \epsilon_i \frac{L_i}{2} \norm{\vv_i}^2.
\end{equation}
\end{corollary}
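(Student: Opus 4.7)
My plan is to reduce Corollary~\ref{cor:20181026b} directly to Proposition~\ref{p:20181026a} by picking the natural index set and a uniform weight vector supported on the ``active'' blocks.

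First, I would set $\byy = \bxx + \beps \odot \bvv$, so that $\mathrm{spt}(\byy - \bxx) \subset I := \mathrm{spt}(\beps) = \{i \in [m] \mid \epsilon_i = 1\}$, which puts us in the scope of Proposition~\ref{p:20181026a}. The statement is trivial if $I = \varnothing$ (both sides equal $\bfs(\bxx)$) or if $I_k \cap I = \varnothing$ for every $k$ (then $\bfs$ does not depend on any of the activated coordinates, so $\bfs(\byy) = \bfs(\bxx)$ and the first-order term vanishes as well). Assume therefore that $M := \max_{1 \leq k \leq p}\bigl(\sum_{j \in I_k} \epsilon_j\bigr) = \max_{1 \leq k \leq p} \mathrm{card}(I_k \cap I) \geq 1$.

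Next, I would set $q_i = 1/M$ for every $i \in I$. By construction, for every $k \in [p]$,
\begin{equation*}
\sum_{i \in I \cap I_k} q_i = \frac{\mathrm{card}(I \cap I_k)}{M} \leq 1,
\end{equation*}
so the hypotheses of Proposition~\ref{p:20181026a} are satisfied. Applying \eqref{eq:20181014b} with this choice yields
\begin{equation*}
\bfs(\byy) \leq \bfs(\bxx) + \scalarp{\nabla \bfs(\bxx), \byy - \bxx} + \frac{M}{2} \sum_{i \in I} L_i \norm{\yy_i - \xx_i}^2.
\end{equation*}

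Finally, I would just unpack the right-hand side: since $\yy_i - \xx_i = \epsilon_i \vv_i$ for all $i \in [m]$ (and equals $\vv_i$ precisely on $I$), the inner-product term expands as $\sum_{i=1}^m \epsilon_i \scalarp{\nabla_i \bfs(\bxx), \vv_i}$, while $\sum_{i \in I} L_i \norm{\vv_i}^2 = \sum_{i=1}^m \epsilon_i L_i \norm{\vv_i}^2$. Substituting back recovers exactly \eqref{eq:20181109c}. No real obstacle arises here; the only delicate point is recognizing that the correct uniform weight is $1/M$, chosen so that the constraint $\sum_{i \in I \cap I_k} q_i \leq 1$ is binding at the worst-case block $k$, and this is precisely what produces the multiplicative factor $M = \max_k (\sum_{i \in I_k} \epsilon_i)$ in front of the quadratic term.
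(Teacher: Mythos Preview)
Your proof is correct and follows exactly the same approach as the paper: apply Proposition~\ref{p:20181026a} with $\byy = \bxx + \beps\odot\bvv$, $I = \mathrm{spt}(\beps)$, and $q_i = 1/\max_{1\le k\le p}\mathrm{card}(I\cap I_k)$. You are in fact slightly more careful than the paper, which does not explicitly address the degenerate cases $I=\varnothing$ or $I\cap I_k=\varnothing$ for all $k$ (where $M=0$ and the choice $q_i=1/M$ would be undefined).
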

\begin{proof}
It follows from Proposition~\ref{p:20181026a} with $\byy = \bxx 
+ \beps \odot \bvv$,
 $I = \mathrm{spt}(\beps)$, and $q_i 
=1/\big(\max_{1 \leq k \leq p}\mathrm{card}(I \cap I_k) \big)= 1/\big( \max_{1 \leq k \leq p}\sum_{i \in I_k} \epsilon_i\big)$.
\end{proof}

\begin{remark}
Most of the above results, appears in \cite{Ric16}
for the special case that 
$\UU_{k,i} = \E_i$ for $i \in I_k$ and $\UU_{k,i} = 0$ for $i \notin I_k$.
In particular, see \cite[Theorem~8]{Ric16}.
\end{remark}

\begin{proposition}
\label{p:20190108a}
Let $\bfs\colon \bHH \to \R$ be a function satisfying
\ref{eq:B0} and suppose that, for every $k \in [p]$,
 $\fs_k$ is $L^{(k)}$-Lipschitz smooth. Set
for every $i \in [m]$, $\tilde{L}_i = \norm{ \sum_{k=1}^p L^{(k)} \UU_{k,i}^\top \UU_{k,i}}$.
Then the following holds.
\begin{enumerate}[{\rm (i)}]
\item\label{p:20190108a_i} $\bfs$ is Lipschitz smooth with constant 
$\norm{\sum_{k=1}^p L^{(k)} U_k^\top U_k}$ in the original metric of $\bHH$;\footnote{
In \cite[Corollary~5.11]{Com15} the worse constant 
$\sum_{k=1}^p L^{(k)} \norm{\UU_k \UU_k^\top}$ was considered.}
\item\label{p:20190108a_ii} 
$\bfs$ satisfies assumption \ref{eq:B1} with $L_i = \tilde{L}_i$;
\item\label{p:20190108a_iii} 
Suppose that, for every $k \in [p]$ and for every $i,j \in [m]$, $i\neq j$, 
the range of the operators $\UU_{k,i}$ and 
$\UU_{k,j}$ are orthogonal. Then, for every $\bxx$, $\bvv \in \bHH$, 
\begin{equation}
\bfs(\bxx + \bvv) \leq \bfs(\bxx) + 
\scalarp{\nabla \bfs(\bxx), \bvv} + \frac 1 2 \sum_{i=1}^m \tilde{L_i}\norm{\vv_i}^2.
\end{equation}
\end{enumerate}
\end{proposition}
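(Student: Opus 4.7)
The plan is to derive a single ``structured'' descent lemma for $\bfs$ by summing the descent lemmas of its components $\fs_k$, and then specialize it in three different ways to obtain \ref{p:20190108a_i}--\ref{p:20190108a_iii}. Define $\UU_k\colon \bHH \to \GG_k$ by $\UU_k \bxx = \sum_{i=1}^m \UU_{k,i} \xx_i$, so that $\bfs(\bxx) = \sum_{k=1}^p \fs_k(\UU_k\bxx)$ and $\nabla \bfs(\bxx) = \sum_{k=1}^p \UU_k^\top \nabla \fs_k(\UU_k \bxx)$. Since each $\fs_k$ is $L^{(k)}$-Lipschitz smooth, the standard descent lemma yields
\begin{equation*}
\fs_k(\UU_k(\bxx+\bvv)) \leq \fs_k(\UU_k \bxx) + \scalarp{\nabla \fs_k(\UU_k\bxx),\UU_k\bvv} + \frac{L^{(k)}}{2}\norm{\UU_k \bvv}^2.
\end{equation*}
Summing over $k\in[p]$ and observing that $\scalarp{\UU_k^\top \nabla \fs_k(\UU_k \bxx),\bvv} = \scalarp{\nabla \fs_k(\UU_k \bxx),\UU_k\bvv}$ gives the master inequality
\begin{equation*}
\bfs(\bxx+\bvv) \leq \bfs(\bxx) + \scalarp{\nabla \bfs(\bxx),\bvv} + \frac{1}{2}\Big\langle \Big(\sum_{k=1}^p L^{(k)}\UU_k^\top \UU_k\Big)\bvv,\bvv\Big\rangle.
\end{equation*}

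For \ref{p:20190108a_i}, I bound the quadratic term by $\bigl\lVert\sum_k L^{(k)}\UU_k^\top\UU_k\bigr\rVert\norm{\bvv}^2$, so $\bfs$ satisfies a global descent lemma with that constant. Since $\bfs$ is convex, the descent lemma is equivalent to Lipschitz continuity of $\nabla \bfs$ with the same constant, by \cite[Theorem~18.15]{book1}.

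For \ref{p:20190108a_ii}, I specialize the master inequality to $\bvv = \E_i \vv_i$. Then $\UU_k \bvv = \UU_{k,i}\vv_i$, and the quadratic term collapses to $\langle (\sum_k L^{(k)}\UU_{k,i}^\top \UU_{k,i})\vv_i,\vv_i\rangle \leq \tilde{L}_i\norm{\vv_i}^2$. This is exactly the blockwise descent lemma \eqref{eq:S00} on coordinate $i$; applying \cite[Theorem~18.15]{book1} on the block $\HH_i$ to the partial function $\vv_i \mapsto \bfs(\bxx + \E_i \vv_i)$ (which is convex because $\bfs$ is) yields the $\tilde{L}_i$-Lipschitz continuity of $\nabla_i \bfs(\xx_1,\dots,\cdot,\dots,\xx_m)$, i.e.~assumption \ref{eq:B1} with constants $\tilde{L}_i$.

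For \ref{p:20190108a_iii}, the key step will be to exploit the orthogonality of the ranges of $\UU_{k,i}$ and $\UU_{k,j}$ for $i\neq j$: this gives $\scalarp{\UU_{k,i}\vv_i,\UU_{k,j}\vv_j}_{\GG_k}=0$, so
\begin{equation*}
\norm{\UU_k \bvv}^2 = \Big\lVert \sum_{i=1}^m \UU_{k,i}\vv_i\Big\rVert^2 = \sum_{i=1}^m \norm{\UU_{k,i}\vv_i}^2.
\end{equation*}
Substituting into the master inequality and swapping the sums over $i$ and $k$ produces $\sum_{i=1}^m \langle (\sum_k L^{(k)}\UU_{k,i}^\top \UU_{k,i})\vv_i,\vv_i\rangle \leq \sum_{i=1}^m \tilde{L}_i \norm{\vv_i}^2$, which is the claimed inequality. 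The only subtle point is the orthogonality-based decomposition of $\norm{\UU_k \bvv}^2$; everything else is a mechanical combination of the descent lemma and the definition of $\tilde{L}_i$. This decomposition is what makes \ref{p:20190108a_iii} tighter than simply applying \ref{p:20190108a_i}, since the ``diagonal'' bound $\sum_i \tilde{L}_i \norm{\vv_i}^2$ can be much smaller than $\lVert\sum_k L^{(k)} \UU_k^\top \UU_k\rVert \norm{\bvv}^2$.
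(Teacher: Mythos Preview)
Your proof is correct and follows essentially the same route as the paper: both derive the ``master'' descent inequality $\bfs(\bxx+\bvv)\leq \bfs(\bxx)+\scalarp{\nabla\bfs(\bxx),\bvv}+\tfrac12\langle(\sum_k L^{(k)}\UU_k^\top\UU_k)\bvv,\bvv\rangle$ by summing the componentwise descent lemmas, then specialize it by bounding the quadratic form globally for \ref{p:20190108a_i}, restricting to $\bvv=\E_i\vv_i$ for \ref{p:20190108a_ii}, and using the orthogonality of the ranges to split $\lVert\UU_k\bvv\rVert^2=\sum_i\lVert\UU_{k,i}\vv_i\rVert^2$ for \ref{p:20190108a_iii}. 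The only cosmetic difference is that you make explicit the appeal to \cite[Theorem~18.15]{book1} (using convexity of $\bfs$, which is guaranteed by \ref{eq:B0}) to pass from the descent inequality to Lipschitz continuity of the gradient, whereas the paper leaves this implicit.
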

\begin{proof}
\ref{p:20190108a_i}:
For every $k \in [p]$, let $\UU_k \colon \bHH \to \GG_k$, 
$\UU_k \bxx = \sum_{i=1}^m \UU_{k,i} \xx_i$.
Let $\bxx, \bvv \in \bHH$. We have
\begin{equation}
\label{eq:20190111b}
\bfs(\bxx + \bvv) = \sum_{k=1}^p \fs_k( \UU_k \bxx + \UU_k \bvv )
\leq \sum_{k=1}^p \bigg(f_k( \UU_k \bxx)
+ \scalarp{\nabla f_k ( \UU_k \bxx), \UU_k \bvv} 
+ \frac{L^{(k)}}{2} \norm{\UU_k \bvv}^2 \bigg).
\end{equation}
Therefore, we have
\begin{align}
\nonumber \bfs(\bxx + \bvv) 
& \leq \bfs(\bxx) 
+ \sum_{k=1}^p \scalarp{\UU_k^\top \nabla f_k ( \UU_k \bxx), \bvv} 
+ \frac 1 2  \sum_{k=1}^p L^{(k)} \scalarp{\UU_k^\top \UU_k \bvv,\bvv}\\
\label{eq:20190112c}&= \bfs(\bxx) 
+  \scalarp{ \nabla f (\bxx), \bvv} 
+ \frac 1 2   \big\langle \sum_{k=1}^p L^{(k)}\UU_k^\top \UU_k \bvv,\bvv \big\rangle\\
\nonumber& \leq \bfs(\bxx) 
+  \scalarp{ \nabla f (\bxx), \bvv} 
+ \frac 1 2   \bigg\lVert \sum_{k=1}^p L^{(k)}\UU_k^\top \UU_k \bigg\rVert \norm{\bvv}^2.
\end{align}
\ref{p:20190108a_ii}:
It follows from \eqref{eq:20190112c} with $\bvv = \E_i \vv_i$ that
\begin{align*}
f(\bxx + \E_i \vv_i) &\leq \bfs(\bxx)+
\scalarp{ \nabla_i f (\bxx), \vv_i}
+ \frac 1 2   \big\langle \sum_{k=1}^p L^{(k)} \UU_{k,i}^\top \UU_{k,i}  \vv_i,\vv_i \big\rangle\\
& \leq \bfs(\bxx)+
\scalarp{ \nabla_i \bfs (\bxx), \vv_i}
+ \frac 1 2   \Big\lVert \sum_{k=1}^p L^{(k)} \UU_{k,i}^\top \UU_{k,i}\Big\rVert  
\norm{\vv_i}^2
\end{align*}
hence $\tilde{L}_i = \norm{\sum_{k=1}^p L^{(k)} \UU_{k,i}^\top \UU_{k,i}}$
is a Lipschitz constant of
 $\nabla_i f(\xx_i, \dots, \xx_{i-1},\cdot, \xx_{i+1}, \dots, \xx_m )$.

\ref{p:20190108a_iii}:
Since $\scalarp{\UU_{k,i}\vv_i, \UU_{k,j} \vv_j} = 0$ if $i \neq j$, 
it follows from \eqref{eq:20190111b} that
\begin{align*}
\bfs(\bxx + \bvv) 
& \leq \bfs(\bxx) 
+ \sum_{k=1}^p \scalarp{\UU_k^\top \nabla f_k ( \UU_k \bxx), \bvv} 
+ \frac 1 2  \sum_{i=1}^m \sum_{k=1}^p L^{(k)} 
\scalarp{\UU_{k,i}^\top \UU_{k,i} \vv_i, \vv_i}\\
& = \bfs(\bxx) 
+  \scalarp{ \nabla f ( \bxx), \bvv} 
+ \frac 1 2  \sum_{i=1}^m 
\big\langle \sum_{k=1}^p L^{(k)} \UU_{k,i}^\top \UU_{k,i} \vv_i, \vv_i \big\rangle\\
& \leq \bfs(\bxx) 
+  \scalarp{ \nabla f ( \bxx), \bvv} 
+ \frac 1 2  \sum_{i=1}^m \Big\lVert \sum_{k=1}^p L^{(k)} \UU_{k,i}^\top \UU_{k,i} \Big\rVert \norm{\vv_i}^2.
\qedhere
\end{align*}
%and the statement follows.
\end{proof}

\begin{remark}
 If $R(\UU_{k,i})$ and $R(\UU_{k,j})$ are orthogonal to each other, then
\begin{equation*}
\sum_{k=1}^p L^{(k)}\UU_k^\top \UU_k 
= \sum_{i=1}^m \E_i \bigg(\underbrace{\sum_{k=1}^p L^{(k)} \UU_{k,i}^\top \UU_{k,i}}_{\HH_i \to \HH_i}\bigg) \E_i^\top,
\end{equation*}
and hence $\norm{\sum_{k=1}^p L^{(k)}\UU_k^\top \UU_k } = \max_{1 \leq i \leq m} \tilde{L}_i$.
\end{remark}

\section{Additional proofs}
\label{sec:appB}

\noindent
{\bf Proof.~of Theorem~\ref{thm:stepsizes}}

\ref{thm:stepsizes_ii}:
It follows from \eqref{eq:20181109c} 
that, point-wise it holds
\begin{equation}
\label{eq:20180925c}
 \bfs(\bxx + \bvarepsilon \odot \bvv) 
\leq \bfs(\bxx) +
\scalarp{\nabla \bfs(\bxx), \bvarepsilon \odot \bvv}
+ \max_{1 \leq k \leq p}\Big(\sum_{i \in I_k} \varepsilon_i \Big) \sum_{i=1}^m 
\varepsilon_i\frac{L_{i}}{2} \norm{\vv_{i}}^2.
\end{equation}
Moreover, since
$\beta_2 =\esssup 
\big(\max_{1 \leq k \leq p} \textstyle\sum_{i \in I_k} \varepsilon_i \big)$,
we have that $L_i \max_{1 \leq k \leq p} \textstyle\sum_{i \in I_k}
 \varepsilon_i \leq L_i \beta_2 \leq \nu_i$  $\PP$-a.s. The statement follows.

\ref{thm:stepsizes_i}
It follows by taking the expectation in  \eqref{eq:20180925c} 
and noting that
\begin{align*}
\EE \bigg[ \max_{1 \leq k \leq p}\Big(\sum_{i \in I_k} \varepsilon_i \Big) 
\sum_{i=1}^m \varepsilon_i\frac{L_{i}}{2}  \norm{\vv_{i}}^2\bigg]
& = \sum_{i=1}^m \EE\bigg[ \varepsilon_i \max_{1 \leq k \leq p}\Big(\sum_{i \in I_k} \varepsilon_i \Big)\bigg] \frac{L_i}{2} \norm{\vv_i}^2\\
& = \sum_{i=1}^m \EE\bigg[ \max_{1 \leq k \leq p}\Big(\sum_{i \in I_k} \varepsilon_i \Big) 
\,\Big\vert\, \varepsilon_i = 1\bigg] \pp_i \frac{L_i}{2} \norm{\vv_i}^2,
\end{align*}
where we used the fact that 
for every discrete random variable $\zeta$, 
$\EE[\varepsilon_i \zeta] = \EE[\zeta\,\vert\, \varepsilon_i=1] \pp_i$.

\ref{thm:stepsizes_iii}
It follows from 
Proposition~\ref{p:20190108a}\ref{p:20190108a_iii} that \ref{eq:S0}
holds
with $\nu_i = \tilde{L}_i$.

\ref{thm:stepsizes_iv}:
Let $i \in [m]$ and $\vv_i \in \HH_i$ and set $\bvv = \E_i \vv_i =  (0,\dots, 0, \vv_i, 0, \dots, 0)$.
Then
\begin{align*}
\EE[\bfs(\bxx + \bvarepsilon \odot \bvv)] &= \EE[\bfs(\bxx + \E_i(\varepsilon_i \vv_i))]
= \pp_i \bfs(\bxx + \E_i \vv_i) + (1-\pp_i)\bfs(\bxx)\\
\EE[\scalarp{\nabla \bfs(\bxx), \bvarepsilon \odot \bvv}] & = \EE[\scalarp{\nabla_i \bfs(\bxx), \varepsilon_i \vv_i}]
 = \pp_i \scalarp{\nabla_i \bfs(\bxx), \vv_i}
\end{align*}
Hence, it follows from \ref{eq:S1} that
$\bfs(\bxx + \E_i \vv_i) \leq \bfs(\bxx) + \scalarp{\nabla_i \bfs(\bxx), \vv_i} + (1/2) \nu_i \norm{\vv_i}^2$.
This shows that $\bfs$ is Lipschitz smooth w.r.t.~the $i$-th block coordinate
with Lipschitz constant $\nu_i$.
The 
global Lipschitz smoothness of $\bfs$ follows from Corollary~\ref{p:20181014a}.
\qed

\vspace{2ex}
\noindent
{\bf Proof.~of Remark~\ref{rmk:20190128e}\ref{rmk:20190128e_iv}.}

We have
$\bfs(\bxx) = \sum_{k=1}^p \fs_k( \UU_k \bxx)$
and $\bfs(\bxx + \bvv) = \sum_{k=1}^p \fs_k \big( \UU_k \bxx + \sum_{i=1}^m\UU_{k,i} \vv_i \big)$.
Moreover, $\nabla \bfs(\bxx) = \sum_{k=1}^p \UU_k^\top \nabla \fs_k( \UU_k \bxx )$.
We let $\bxx \in \bHH$ and define
\begin{align*}
\varphi(\bvv) &:= \bfs(\bxx + \bvv) - \bfs(\bxx) - \scalarp{\nabla \bfs(\bxx), \bvv}\\
\psi_k(\buu) &:= \fs_k( \UU_k\bxx + \buu) - \fs_k(\UU_k\bxx) - \scalarp{\nabla \fs_k(\UU_k\bxx), \buu}.
\end{align*}
We clearly have $\varphi(\bvv) \geq 0$, $\varphi(0) = 0$
and $\psi_k(\buu) \geq 0$, $\psi_k(0) = 0$.
Moreover,
\begin{equation}
\varphi(\bvv) = \sum_{k=1}^p \fs_k (\UU_k \bxx + \UU_k \bvv ) - \sum_{k=1}^p \fs_k ( \UU_k \bxx )
- \sum_{k=1}^p \scalarp{\nabla \fs_k (\UU_k \bxx), \UU_k \bvv} =
\sum_{k=1}^p \psi_k(\UU_k \bvv).
\end{equation}
Therefore,
\begin{align}
\nonumber \EE [\varphi(\bvarepsilon \odot \bvv)]
\nonumber&= \sum_{k=1}^p \EE \Big[\psi_k\Big ( \sum_{i\in I_k}  \UU_{k,i}\varepsilon_i\vv_i\Big)\Big]\\
\label{eq:20200306d}& = \sum_{k=1}^p \sum_{t=1}^\eta\EE \Big[\psi_k\Big ( \sum_{i\in I_k}  \UU_{k,i}\varepsilon_i\vv_i\Big) \,\Big\vert\, \sum_{i \in I_k} \varepsilon_i = t\Big] \PP\bigg( \sum_{i \in I_k} \varepsilon_i = t\bigg).
\end{align}
Now, if $\omega \in \Omega$ is such that $\sum_{i \in I_k} \varepsilon_i(\omega)=t$
and we set $I = \{i \in [m] \,\vert\, \varepsilon_i(\omega) = 1\}$, then we have
$\mathrm{card}(I\cap I_k) = t$ and
\begin{equation*}
\psi_k\Big ( \sum_{i\in I_k}  \UU_{k,i}\varepsilon_i(\omega)\vv_i\Big)
= \psi_k\Big ( \sum_{i\in I\cap I_k}  \UU_{k,i}\vv_i\Big) 
\leq \frac 1 t \sum_{i\in I\cap I_k} \psi_k (t \UU_{k,i}\vv_i)
= \frac 1 t \sum_{i = 1}^m \varepsilon_i(\omega)\psi_k (t \UU_{k,i}\vv_i).
\end{equation*}
Hence $\psi_k\big ( \sum_{i\in I_k}  \UU_{k,i}\varepsilon_i\vv_i\big) \leq (1/t) \sum_{i=1}^m \varepsilon_i\psi_k (t \UU_{k,i}\vv_i)$ on the event $\sum_{i \in I_k} \varepsilon_i=t$.
Then,
\begin{align}
\nonumber \EE \bigg[\psi_k\Big ( \sum_{i\in I_k}  \UU_{k,i}\varepsilon_i\vv_i\Big) \,\Big\vert\, \sum_{i \in I_k} \varepsilon_i 
= t\bigg] 
\nonumber&\leq\frac 1 t \sum_{i=1}^m \psi_k (t \UU_{k,i}\vv_i) \EE \Big[ \varepsilon_i \,\Big\vert\,  \sum_{i \in I_k} \varepsilon_i = t \Big]\\
& = \frac 1 t \sum_{i=1}^m \psi_k (t \UU_{k,i}\vv_i) \PP \bigg( \varepsilon_i =1\,\Big\vert\,  \sum_{i \in I_k} \varepsilon_i = t \bigg).
\end{align}
Plugging the above inequality in \eqref{eq:20200306d} we get
\begin{align}
\label{eq:20200409a}
\nonumber \EE [\varphi(\bvarepsilon \odot \bvv)] & 
\leq \sum_{k=1}^p \sum_{t=1}^\eta
\frac 1 t \sum_{i=1}^m \psi_k (t \UU_{k,i}\vv_i) \PP \bigg( \varepsilon_i =1\,\Big\vert\,  \sum_{i \in I_k} \varepsilon_i = t \bigg)\PP\bigg( \sum_{i \in I_k} \varepsilon_i = t\bigg)\\
\nonumber & = 
\sum_{i=1}^m \sum_{t=1}^\eta \frac 1 t 
\sum_{k=1}^p  \psi_k (t \UU_{k,i}\vv_i) \PP \bigg( \sum_{i \in I_k} \varepsilon_i = t \,\Big\vert\, \varepsilon_i = 1 \bigg) \pp_i\\
\nonumber& \leq \sum_{i=1}^m \sum_{t=1}^\eta \frac 1 t  
\max_{\substack{1 \leq k \leq p \\[0.3ex] i \in I_k}} \PP \bigg( \sum_{i \in I_k} \varepsilon_i = t \,\Big\vert\, \varepsilon_i = 1 \bigg) \pp_i  \sum_{k=1}^p \psi_k ( \UU_{k,i} t\vv_i)\\
\nonumber& = \sum_{i=1}^m \sum_{t=1}^\eta \frac 1 t  
\max_{\substack{1 \leq k \leq p \\[0.3ex] i \in I_k}} \PP \bigg( \sum_{i \in I_k} \varepsilon_i = t \,\Big\vert\, \varepsilon_i = 1 \bigg) \pp_i  \varphi (\E_i t \vv_i)\\
& \leq \sum_{i=1}^m \sum_{t=1}^\eta \frac 1 t  
\max_{\substack{1 \leq k \leq p \\[0.3ex] i \in I_k}} \PP \bigg( \sum_{i \in I_k} \varepsilon_i = t \,\Big\vert\, \varepsilon_i = 1 \bigg) \pp_i  \frac{L_i}{2} t^2 \norm{\vv_i}^2,
\end{align}
where in the last inequality we used \ref{eq:B1}.
So, setting $\beta_{1,i}$ as in \eqref{eq:20200316a}, if $\nu_i \geq \beta_{1,i} L_i$, then
\ref{eq:S1} holds. Note that in deriving \eqref{eq:20200409a}, 
if for some $i \in [m]$ there are no $k \in [p]$ such that $i \in I_k$, the corresponding term $\max_{k \in \varnothing} \PP \big( \sum_{i \in I_k} \varepsilon_i = t \,\vert\, \varepsilon_i = 1 \big)$ can be set to zero.
\qed

\vspace{2ex}
\noindent
{\bf Proof.~of formula~\eqref{eq:20200316b}.}

Since in the proof of \eqref{eq:20200316a} in Remark~\ref{rmk:20190128e}\ref{rmk:20190128e_iv}, we only use the fact that $i \notin I_k\ \Rightarrow\ \UU_{k,i}=0$,
we can assume, without loss of generality, that, for every $k \in [p]$, $\mathrm{card}(I_k) = \eta$.
Let $i \in \bigcup_{k=1}^p I_k$.
Then, since the block sampling is doubly uniform, we have that
$\PP \big( \sum_{i \in I_k} \varepsilon_i = t \,\vert\, \varepsilon_i = 1 \big)$
does not depend on $k$ such that $i \in I_k$. Therefore, \eqref{eq:20200316a}
becomes $\beta_{1,i} = \sum_{t=1}^\eta t 
\PP\big(\sum_{j \in I_k} \varepsilon_j = t \,\vert\, \varepsilon_i=1 \big)$,
for some $k \in [p]$ such that $i \in I_k$.
Hence
\begin{align}
\nonumber\beta_{1,i} &= \EE\Big[\sum_{j \in I_k} \varepsilon_k \,\Big\vert\, \varepsilon_i=1 \Big]
= \sum_{j \in I_k} \EE[\varepsilon_j \,\vert\, \varepsilon_i=1]
= \sum_{j \in I_k} \PP(\varepsilon_j =1 \,\vert\, \varepsilon_i=1)\\
\label{eq:20200409b}&= \frac{1}{\pp}\sum_{i \in I_k} \PP(\varepsilon_j=1, \varepsilon_i=1)
=  1 + (\eta - 1) \frac{\tilde{\pp}}{\pp},
\end{align}
where $\pp = \PP(\varepsilon_i=1)$ and $\tilde{\pp} = \PP(\varepsilon_j=1, \varepsilon_i=1)$ (with $i \neq j$).
Now, since $\EE[\sum_{i=1}^m \varepsilon_i] = m \pp$ and
$\EE[(\sum_{i=1}^m \varepsilon_i)^2] = \sum_{i=1}^m \EE[\varepsilon_i] + \sum_{i\neq j} \EE[\varepsilon_i \varepsilon_j] = m \pp + m(m-1) \tilde{\pp}$, we have that
$\tilde{\pp} = \big( \EE[(\sum_{i=1}^m \varepsilon_i)^2] -m \pp\big)/(m(m-1))$,
which plugged into \eqref{eq:20200409b} gives \eqref{eq:20200316b}.

%\newpage
\vspace{2ex}
\noindent
{\bf Proof.~of Lemma~\ref{lem:20190313b}}

Let $\zz \in \HHs$.
It follows from the definition of $\xx^+$ 
that $\xx - \xx^+ - \nabla \varphi(\xx) \in \partial \psi(\xx^+)$.
Therefore, 
$\psi(\zz) \geq \psi(\xx^+) + \scalarp{\xx - \xx^+ - \nabla \varphi(\xx), 
\zz - \xx^+} + (\mu_\psi/2)
\norm{\zz - \xx^+}^2,
$
hence
\begin{equation*}
\scalarp{\xx - \xx^+ , \zz - \xx^+} \leq \psi(\zz) - \psi(\xx^+) +  \scalarp{\nabla 
\varphi(\xx), \zz - \xx^+} - \frac{\mu_\psi}{2} \norm{\zz - \xx^+}^2.
\end{equation*}
Now, we note that
$
\norm{\xx^+ - \zz}^2 = \norm{\xx^+ - \xx}^2 + \norm{\xx - \zz}^2 + 2 \scalarp{\xx^+ - \xx, \xx - \zz}$.
Then,
\begin{align*}
\scalarp{\xx - \xx^+ , \zz - \xx} + \scalarp{\xx - \xx^+ , \xx - \xx^+} 
&\leq \psi(\zz) - \psi(\xx^+) 
+  \scalarp{\nabla \varphi(\xx), \zz - \xx} +  \scalarp{\nabla \varphi(\xx), \xx - \xx^+} \\[1ex]
&\quad- \frac{\mu_\psi}{2} \norm{\zz - \xx}^2 -  \frac{\mu_\psi}{2} \norm{\xx - \xx^+}^2 - \mu_\psi
 \scalarp{\xx - \xx^+,\zz - \xx}
\end{align*}
and hence
\begin{align*}
\nonumber(1 + \mu_\psi)\scalarp{\xx - \xx^+\!\!, \zz - \xx} 
\nonumber&\leq \psi(\zz) - \psi(\xx)  +  \scalarp{\nabla \varphi(\xx), \zz - \xx} - \frac{\mu_\psi}{2} \norm{\zz - \xx}^2 \!+\! \psi(\xx) \!-\! \psi(\xx^+) \\[1ex]
&\qquad +  \scalarp{\nabla \varphi(\xx), \xx - \xx^+} -  \Big( 1 +\frac{\mu_\psi}{2} \Big) \norm{\xx - \xx^+}^2.
\end{align*}
Since $\scalarp{\nabla \varphi(\xx), \zz - \xx} \leq \varphi(\zz) - \varphi(\xx) 
- (\mu_\varphi/2) \norm{\zz - \xx}^2$, the statement follows.
\qed
\vspace{2ex}

\begin{lemma}
\label{lem:20200928}
Let $a,b,c \in \R_{++}$. Then the largest constant $\bar{\lambda}>0$ satisfying the following inequality
\begin{equation}
\label{eq:20200928a}
\forall\, (s,t) \in \R_+^2,\ \text{with}\ t\geq cs,\quad cs + t \geq \bar{\lambda}(as + b t)
\end{equation}
is
\begin{equation}
\bar{\lambda} = \min\bigg\{ \frac{1}{b}, \frac{2 c}{a + bc} \bigg\} =
\begin{cases}
\dfrac{1}{b} &\text{if } b\geq \dfrac{a}{c}\\[1.5ex]
\dfrac{2c}{a + bc} &\text{if } b\leq \dfrac{a}{c}.
\end{cases}
\end{equation}
\end{lemma}
\begin{proof}
Property \eqref{eq:20200928a} is equivalent to
\begin{equation*}
\forall\, (s,t) \in \R_+^2,\ \text{with}\ t\geq cs\ \text{and}\ cs+t>0,\quad 
\frac{a s + b t}{c s + t} \leq \frac{1}{\bar{\lambda}}.
\end{equation*}
Therefore,
\begin{align}
\nonumber\frac{1}{\bar{\lambda}} 
&= \sup\bigg\{ \frac{as+bt}{cs+t} \,\bigg\vert\, s,t \in \R_+, cs+t>0, t \geq cs \bigg\}\\[1.7ex]
\label{20200928c}&= \sup\big\{ as+bt \big\vert\, s,t \in \R_+, cs+t=1, t \geq cs\big\}.
%\sup_{\substack{x,y \in \R_+\\cx+y>0\\y \geq cx}} \frac{ax+by}{cx+y}
\end{align}
Now, since 
\begin{equation*}
(cs+t=1\ \text{and}\ t\geq cs)\ \Leftrightarrow\ 
(cs=1-t\ \text{and}\ t\geq 1-t)\ \Leftrightarrow\ 
(cs=1-t\ \text{and}\ t\geq 1/2),
\end{equation*}
it follows from \eqref{20200928c} that
\begin{equation}
\frac{1}{\bar{\lambda}} = \sup_{t \in [1/2, 1]} \frac{a}{c}(1-t) + bt =
\max\bigg\{b, \frac{1}{2}\bigg( \frac{a}{c} + b \bigg)\bigg\} =
\begin{cases}
b &\text{if } b \geq \dfrac{a}{c}\\[1.7ex]
\dfrac{1}{2} \bigg(\dfrac{a}{c} + b \bigg) &\text{if } b \leq \dfrac{a}{c}.
\end{cases}
\end{equation}
Therefore, the statement follows.
\end{proof}

\vspace{2ex}
\noindent
{\bf Proof.~of Theorem~\ref{p:20181130a}}

We first note that, since, $\norm{\cdot}^2_{\bGammas^{-1}} \geq \pp_{\min} \norm{\cdot}^2_\bWW$,
the conclusion of Proposition~\ref{p:20180927a}\ref{p:20180927a_ii}  can be stated 
as follows:
\begin{align}
\nonumber\EE\bigg[ \pp_{\min} \frac{1+\sigma_{\Gammas^{-1}}}{2} &\norm{\bx^{\iter+1} - \bxx}_\bWW^2  + \Fs(\bx^{\iter+1}) - \Fs(\bxx)
\,\Big\vert\, \Fsc_{\iter-1} \bigg] \\[1ex]
\nonumber&\leq \pp_{\min} \frac{1+\sigma_{\Gammas^{-1}}}{2} \norm{\bx^{\iter} - \bxx}_\bWW^2  + \Fs(\bx^{\iter}) - \Fs(\bxx)\\[1ex]
\nonumber&\qquad - \pp_{\min} \bigg( 
 \frac{\mu_{\Gammas^{-1}} + \sigma_{\Gammas^{-1}}}{2} \pp_{\min} \norm{\bx^\iter - \bxx}_{\bWW}^2 + \Fs(\bx^\iter) - \Fs(\bxx)  \bigg)\\[1ex]
\label{eq:20181113a}&\qquad+ \frac{(\delta - 1)_+}{2+\sigma_{\Gammas^{-1}}-\delta} \EE[ \Fs(\bx^{\iter}) -\Fs(\bx^{\iter+1}) 
\,\vert\, \Fsc_{\iter-1} ].
\end{align}
%\ref{p:20181130a_ii}: 
%Let $\bxx \in \dom \Fs$, $\iter \in \N$,
%and set $\xi_{\iter} = \pp_{\min}\norm{\bx^{\iter} - \bxx}_\bWW^2/2
%+ \Fs(\bx^{\iter}) - \Fs(\bxx)$. 
%Then, taking the mean in \eqref{eq:20181113a}, 
%with $\mu_{\Gammas^{-1}}=\sigma_{\Gammas^{-1}} = 0$, we derive
%\begin{equation}
%\label{eq:20190608a}
%\EE[\xi_{\iter+1}] 
%\leq \EE[\xi_\iter] - \pp_{\min} \EE[\Fs(\bx^\iter) - \Fs(\bxx)] 
%+  \frac{(\delta - 1)_+}{2-\delta} \EE[ \Fs(\bx^{\iter}) -\Fs(\bx^{\iter+1}) ].
%\end{equation}
%Then, applying \eqref{eq:20190608a} recursively and recalling that 
%$(\EE[\Fs(\bx^i)])_{i \in \N}$ is decreasing, we have
%\begin{align*}
%\EE[\Fs(\bx^{\iter+1})] - \Fs(\bxx) 
%&\leq \EE[\xi_{\iter+1}] \\
%&\leq \EE[\xi_0]  - \pp_{\min}\sum_{i=0}^\iter \EE[\Fs(\bx^i) - \Fs(\bxx)] 
%+ \frac{(\delta - 1)_+}{2-\delta}\pp_{\min}\sum_{i=0}^\iter \EE[ \Fs(\bx^{i}) -\Fs(\bx^{i+1}) ].\\
%&\leq \EE[\xi_0]  -  \pp_{\min}(\iter+1) \EE[\Fs(\bx^{\iter+1}) - \Fs(\bxx)] 
%+ \frac{(\delta - 1)_+}{2-\delta}\pp_{\min}  \EE[ \Fs(\bx^{0}) -\Fs(\bx^{\iter+1}) ].
%\end{align*}
%Therefore, noting that $(\delta-1)_+/(2-\delta) +1= \max\{1, 1/(2-\delta)\}$
%and choosing $\bxx$  in $\bSS_*$, we derive
%\begin{align}
%\nonumber(1 + \pp_{\min}&(\iter+1))(\EE[\Fs(\bx^{\iter+1})] - \Fs_*) \\[1ex]
%\label{eq:20190126a}&\leq \pp_{\min}\frac{\EE\norm{\bx^{0} - \bxx}_\bWW^2}{2} 
%+ \max\Big\{1, \frac{1}{2-\delta}\Big\}(\EE [\Fs(\bx^{0})] -  \Fs_*),
%\end{align}
%and the statement follows.
%\ref{p:20181130a_iii}: 
Let $\bxx = \bxx_*$ and
set for brevity $r_\iter^2 = (\pp_{\min}/2) \norm{\bx^\iter - \bxx_*}_\bWW^2$,
and $F_\iter = \Fs(\bx^\iter)$. Then, \eqref{eq:20181113a} yields
\begin{align*}
\EE[ (1+\sigma_{\Gammas^{-1}}) r_{\iter+1}^2 + F_{\iter+1} - \Fs_* \,\vert\, \Fsc_{\iter-1}]
&\leq (1+\sigma_{\Gammas^{-1}}) r_\iter^2 + F_\iter - \Fs_* \\
&\quad- \pp_{\min} \big( (\mu_{\Gammas^{-1}}+\sigma_{\Gammas^{-1}}) r_\iter^2 + F_\iter - \Fs_* \big)\\
&\quad + \frac{(\delta - 1)_+}{2+\sigma_{\Gammas^{-1}}-\delta} \EE[ F_\iter -F_{\iter+1}
\,\vert\, \Fsc_{\iter-1} ].
\end{align*}
Let $b = 1+ (\delta - 1)_+ /(2+\sigma_{\Gammas^{-1}}-\delta)$. Then the above inequality can be rewritten as
\begin{align}
\nonumber \EE[ (1+\sigma_{\Gammas^{-1}}) r_{\iter+1}^2 + b(F_{\iter+1} - \Fs_*) \,\vert\, \Fsc_{\iter-1}]
\nonumber&\leq (1+\sigma_{\Gammas^{-1}}) r_\iter^2 + b(F_\iter - \Fs_*) \\
\label{eq:20200926a}&\quad- \pp_{\min} \big( (\mu_{\Gammas^{-1}}+\sigma_{\Gammas^{-1}}) r_\iter^2 + F_\iter - \Fs_* \big).
%\,\vert\, \Fsc_{\iter-1} ].
\end{align}
Now, we derive from \eqref{eq:20181112b}-\eqref{eq:20181112b1} that
\begin{equation*}
F_\iter - \Fs_* \geq \frac{\mu_{\Gammas^{-1}} + \sigma_{\Gammas^{-1}}}{2} \sum_{i=1}^m \frac{1}{\gamma_i} \norm{x_i^\iter - x_i}^2
\geq \frac{\mu_{\Gammas^{-1}} + \sigma_{\Gammas^{-1}}}{2} \pp_{\min}\norm{\bx^\iter - \bxx}^2_{\bWW} 
= (\mu_{\Gammas^{-1}} + \sigma_{\Gammas^{-1}}) r_\iter^2.
\end{equation*}
Therefore, it follows from Lemma~\ref{lem:20200928} (with $c = \mu_{\Gammas^{-1}}+\sigma_{\Gammas^{-1}}$ and
$a = 1+\sigma_{\Gammas^{-1}}$) that 
\begin{equation}
 \label{eq:20200926b}
(\mu_{\Gammas^{-1}}+\sigma_{\Gammas^{-1}}) r_\iter^2 + F_\iter - \Fs_*
\geq \bar{\lambda} \big( (1+\sigma_{\Gammas^{-1}}) r_\iter^2 + b(F_\iter - \Fs_*)\big),
\end{equation}
where
\begin{equation}
\bar{\lambda} = 
\begin{cases}
\dfrac{1}{b} &\text{if } b\geq \dfrac{1+\sigma_{\Gammas^{-1}}}{\mu_{\Gammas^{-1}}+\sigma_{\Gammas^{-1}}}\\[1.5ex]
\dfrac{2(\mu_{\Gammas^{-1}}+\sigma_{\Gammas^{-1}})}{1+\sigma_{\Gammas^{-1}} + b(\mu_{\Gammas^{-1}}+\sigma_{\Gammas^{-1}})} &\text{if } b\leq \dfrac{1+\sigma_{\Gammas^{-1}}}{\mu_{\Gammas^{-1}}+\sigma_{\Gammas^{-1}}}.
\end{cases}
\end{equation}
Then, by \eqref{eq:20200926a} and \eqref{eq:20200926b}, we have that
\begin{align*}
\EE[ (1+\sigma_{\Gammas^{-1}}) r_{\iter+1}^2 + b(F_{\iter+1} - \Fs_*) \,\vert\, \Fsc_{\iter-1}]
\leq (1 - \pp_{\min} \bar{\lambda})\big((1+\sigma_{\Gammas^{-1}}) r_\iter^2 + b(F_\iter - \Fs_*) \big)
\end{align*}
and hence, taking the expectation, and applying the resulting inequality recursively, we have,
\begin{equation}
b (\EE[F_{\iter}] - \Fs_*) \leq \EE[ (1+\sigma_{\Gammas^{-1}}) r_{\iter}^2 + b(F_{\iter} - \Fs_*)]
\leq (1 - \pp_{\min} \bar{\lambda})^\iter 
\big((1+\sigma_{\Gammas^{-1}}) r_0^2 + b(F_{0} - \Fs_*) \big).
\end{equation}
To conclude it is sufficient to note that, since
\begin{equation*}
b = \max\bigg\{ 1, \frac{1+\sigma_{\Gammas^{-1}}}{2 - \delta +\sigma_{\Gammas^{-1}}} \bigg\}
\quad\text{and (in virtue of \eqref{eq:20190126e})}\quad \mu_{\Gammas^{-1}} \leq \delta,
\end{equation*}
we have
\begin{equation*}
\bar{\lambda} = 
\begin{cases}
\dfrac{2 - \delta +\sigma_{\Gammas^{-1}}}{1+\sigma_{\Gammas^{-1}}} &\text{if } \delta>1\ \text{and}\ \mu_{\Gammas^{-1}} \geq 2 - \delta\\[2ex]
\dfrac{2(\mu_{\Gammas^{-1}}+\sigma_{\Gammas^{-1}})}{1+\sigma_{\Gammas^{-1}} + (\mu_{\Gammas^{-1}}+\sigma_{\Gammas^{-1}})(1+\sigma_{\Gammas^{-1}})/(2 - \delta +\sigma_{\Gammas^{-1}})}&\text{if } \delta>1\ \text{and}\ \mu_{\Gammas^{-1}} \leq 2 - \delta\\[2ex]
\dfrac{2(\mu_{\Gammas^{-1}}+\sigma_{\Gammas^{-1}})}{1+\sigma_{\Gammas^{-1}} + (\mu_{\Gammas^{-1}}+\sigma_{\Gammas^{-1}})} &\text{if } \delta \leq 1.
\hspace{20ex}\qed
\end{cases}
\end{equation*}

\section{Some results on duality theory}
In this section, for the reader's convenience, 
we recap 
the results obtained in \cite{Dun16}.
Let $\varphi\colon \HH \to \R$ and $\psi\colon \GG \to \left]-\infty,+\infty\right]$
be two lower semicontinuous and convex functions defined on Hilbert spaces,
and let $\As\colon \HH \to \GG$ be a bounded linear operator. 
In this section we suppose that $\varphi$ is $\mu$-strongly convex.
We consider the following optimization problems in duality (in the sense of Fenchel-Rockafellar)
\begin{equation}
\min_{\xx \in \HH} \varphi(\xx) + \psi(\As \xx):=\mathcal{P}(\xx)
\quad\text{and}\quad
\min_{\uu \in \GG} \psi^*(\uu) + \varphi^*(-\As^\top \uu):=\mathcal{D}(\uu)
\end{equation}
We define the \emph{duality gap function}
$G\colon \HH \times\GG \to \left]-\infty,+\infty\right]$, 
$G(\xx,\uu) = \mathcal{P}(\xx) + \mathcal{D}(\uu)$
and recall that 
\begin{equation*}
(\mathcal{P}(\xx) - \inf \mathcal{P}) + (\mathcal{D}(\uu) - \inf \mathcal{D}) \leq G(\xx,\uu).
\end{equation*}
So, the duality gap function bounds the primal and dual objectives.
We have the following theorem
\begin{theorem}
\label{thm:dualgap}
Suppose that $R(\As) \subset \dom \partial \psi$. Then the following holds:
\begin{enumerate}[{\rm (i)}]
\item\label{thm:dualgap_i} Suppose that $\psi^*$ is $\alpha$-strongly convex.
Let $\uu \in \dom \psi^*$ and set $\xx = \nabla \varphi^*(-\As^\top \uu)$. Then, 
\begin{equation}
\label{eq:dualgap_1}
G(\xx,\uu) \leq \bigg(1 + \frac{\norm{\As}^2}{\alpha\mu} \bigg)(\mathcal{D}(\uu) - \inf \mathcal{D}).
\end{equation}
\item\label{thm:dualgap_ii} Suppose that $\psi$ is $\theta$-Lipschitz continuous.
Let $\uu \in \dom \psi^*$ be such that $\mathcal{D}(\uu)- \inf \mathcal{D}<\norm{\As}^2 L^2/\mu$
and set $\xx = \nabla \varphi^*(-\As^\top \uu)$.
Then, we have
\begin{equation}
\label{eq:dualgap_2}
G(\xx,\uu) \leq 2 \frac{\norm{\As}\theta}{\mu^{1/2}} (\mathcal{D}(\uu) - \inf \mathcal{D})^{1/2}.
\end{equation}
Moreover, if $u$ is a random variable taking values in $\dom \psi^*$ and such that 
$\EE[\mathcal{D}(u)]- \inf \mathcal{D}<\norm{\As}^2 L^2/\mu$ and we set $x = \nabla \varphi^*(-\As^\top u)$, then $\EE[G(x,u)] \leq 2 \norm{\As}\theta/\mu^{1/2} (\EE[\mathcal{D}(u)] - \inf \mathcal{D})^{1/2}$.
\end{enumerate}
\end{theorem}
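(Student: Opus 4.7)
The starting point for both parts is the identity
\begin{equation*}
G(\xx,\uu) \;=\; (\mathcal{P}(\xx)-\inf\mathcal{P}) + (\mathcal{D}(\uu)-\inf\mathcal{D}),
\end{equation*}
which is just strong duality (guaranteed by the hypothesis $R(\As)\subset\dom\partial\psi$). Equivalently, the primal--dual relation $\xx = \nabla\varphi^*(-\As^\top\uu)$ forces Fenchel--Young equality at $(\xx,-\As^\top\uu)$ and so collapses $G(\xx,\uu)$ to the residual $\psi(\As\xx)+\psi^*(\uu)-\langle\uu,\As\xx\rangle$. In either form, the task reduces to controlling the primal suboptimality $\mathcal{P}(\xx)-\inf\mathcal{P}$ in terms of $d := \mathcal{D}(\uu)-\inf\mathcal{D}$.

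For part (i), the key ingredient is the strengthened quadratic growth
\begin{equation*}
d \;\geq\; \tfrac{\alpha}{2}\|\uu-\uu^*\|^2 + \tfrac{\mu}{2}\|\xx-\xx^*\|^2,
\end{equation*}
which I would derive by decomposing $d = D_{\psi^*}(\uu,\uu^*) + D_{\varphi^*\circ(-\As^\top)}(\uu,\uu^*)$ and invoking $\alpha$-strong convexity of $\psi^*$ for the first term and the Baillon--Haddad inequality for the $(1/\mu)$-smooth $\varphi^*$ for the second. I would then bound the primal gap by combining $\mu$-strong convexity of $\varphi$ (with subgradient $-\As^\top\uu$ at $\xx$) with $(1/\alpha)$-Lipschitz smoothness of $\psi$ (with gradient $\uu^*$ at $\As\xx^*$), producing a residual cross term $\langle\uu^*-\uu,\As(\xx-\xx^*)\rangle$. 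Controlling it by Young's inequality with weight calibrated so that the resulting $(\mu/2)\|\xx-\xx^*\|^2$ exactly cancels the one coming from strong convexity of $\varphi$ leaves $(\|\As\|^2/(2\mu))\|\uu-\uu^*\|^2 + (\|\As\|^2/(2\alpha))\|\xx-\xx^*\|^2 = (\|\As\|^2/(\alpha\mu))\cdot[\tfrac{\alpha}{2}\|\uu-\uu^*\|^2+\tfrac{\mu}{2}\|\xx-\xx^*\|^2] \leq (\|\As\|^2/(\alpha\mu))\,d$, and \eqref{eq:dualgap_1} follows from the identity.

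For part (ii), strong convexity of $\psi^*$ is no longer available; instead I would use that $\psi$ being $\theta$-Lipschitz forces $\dom\psi^*\subset\{\uu:\|\uu\|\leq\theta\}$ and in particular $\|\uu\|,\|\uu^*\|\leq\theta$. Strong convexity of $\varphi$ combined with $|\langle\uu,\As(\xx^*-\xx)\rangle|\leq\theta\|\As\|\|\xx-\xx^*\|$ and $|\psi(\As\xx)-\psi(\As\xx^*)|\leq\theta\|\As\|\|\xx-\xx^*\|$ yields $\mathcal{P}(\xx)-\inf\mathcal{P}\leq 2\theta\|\As\|\|\xx-\xx^*\|-(\mu/2)\|\xx-\xx^*\|^2$; absorbing the negative quadratic via the lower bound $(\mu/2)\|\xx-\xx^*\|^2\leq\mathcal{P}(\xx)-\inf\mathcal{P}$ halves the constant to $\mathcal{P}(\xx)-\inf\mathcal{P}\leq\theta\|\As\|\|\xx-\xx^*\|$. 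Baillon--Haddad applied to $\varphi^*\circ(-\As^\top)$ still provides $\|\xx-\xx^*\|^2\leq(2/\mu)d$ even without dual strong convexity, whence $\mathcal{P}(\xx)-\inf\mathcal{P}\leq\theta\|\As\|\sqrt{2d/\mu}$; adding the dual gap and using the hypothesis $d<\|\As\|^2\theta^2/\mu$ (which reabsorbs the leftover $d$ into a $\theta\|\As\|\sqrt{d/\mu}$-quantity) gives \eqref{eq:dualgap_2}. The random-variable version then follows by taking expectations and applying Jensen's inequality to the concave function $\sqrt{\cdot}$.

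The main delicacy is the Young-weight calibration in part (i): a crude application yields only the weaker factor $(1+\|\As\|^2/(\alpha\mu))^2$, and matching the claimed linear factor $(1+\|\As\|^2/(\alpha\mu))$ genuinely requires the strengthened quadratic growth above rather than the naive strong-convexity bound $\|\uu-\uu^*\|^2\leq(2/\alpha)d$; for part (ii), tracking the exact numerical constant in the $\sqrt{d}$ estimate is a secondary source of technical care.
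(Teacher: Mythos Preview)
The paper does not actually prove Theorem~\ref{thm:dualgap}: the appendix opens by stating that it ``recap[s] the results obtained in \cite{Dun16}'' and then states the theorem with no proof whatsoever before the document ends. So there is no ``paper's own proof'' to compare your proposal against; your sketch is an independent attempt at a result the authors simply quote.

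On its merits, your argument for part~\ref{thm:dualgap_i} is sound. The key identity $G(\xx,\uu)=\psi(\As\xx)+\psi^*(\uu)-\langle\uu,\As\xx\rangle$ via Fenchel--Young collapse is correct, and the strengthened quadratic growth $d\geq(\alpha/2)\lVert\uu-\uu^*\rVert^2+(\mu/2)\lVert\xx-\xx^*\rVert^2$ does follow from splitting the dual Bregman divergence and invoking co-coercivity of $\nabla\varphi^*$ (what you call Baillon--Haddad) on the smooth part. The Young--inequality bookkeeping you describe then yields exactly the factor $1+\lVert\As\rVert^2/(\alpha\mu)$.

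For part~\ref{thm:dualgap_ii} your strategy is correct in spirit but, as you yourself flag, the constant does not quite close. Following your steps one gets $G(\xx,\uu)\leq \theta\lVert\As\rVert\sqrt{2d/\mu}+d$, and the hypothesis $d<\lVert\As\rVert^2\theta^2/\mu$ only converts the additive $d$ into an extra $\theta\lVert\As\rVert\sqrt{d/\mu}$, giving the coefficient $1+\sqrt{2}\approx 2.41$ rather than the stated~$2$. This is a genuine numerical gap, not just sloppiness; reaching the exact constant~$2$ (if indeed the statement is not itself loose --- note the unexplained symbol $L$ in the hypothesis, presumably a typo for $\theta$) would require either a sharper primal-gap estimate or a different organization of the Fenchel--Young residual, likely along the lines of the original argument in \cite{Dun16}. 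Your treatment of the random-variable addendum via Jensen is fine, provided one first establishes the unconditional bound $G\leq d+C\sqrt{d}$ pointwise and only then takes expectations, since the smallness hypothesis is stated only on $\EE[d]$, not on $d$ almost surely.
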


\end{document}